\documentclass{amsart}

\usepackage{verbatim}
 \usepackage{amsmath, amssymb, amsfonts, euscript, enumerate,latexsym}
\usepackage{amsthm}

\usepackage{txfonts}
\usepackage{fancyhdr}
 \usepackage{mathrsfs}
\usepackage{mathtools}
\usepackage{epsfig,subfigure}
\usepackage{graphicx}
\usepackage{color,wrapfig}
\usepackage{txfonts}
 
 \usepackage{hyperref}

\newcommand{\R}{{\mathbb R}}
\newcommand{\N}{{\mathbb N}}
\newcommand{\Z}{{\mathbb Z}}

\newcommand{\cG}{{\mathcal G}}

\newcommand{\cA}{{\mathcal A}}

\newcommand{\cH}{{\mathcal H}}
\newcommand{\cD}{{\mathcal D}}

\newcommand{\cM}{{\mathcal M}}

\newcommand{\cF}{{\mathcal F}}
\newcommand{\cS}{{\mathcal S}}

\newcommand{\cX}{{\mathcal X}}
\newcommand{\cQ}{{\mathcal Q}}

\newcommand{\sH}{{\mathscr H}}
\newcommand{\sS}{{\mathscr S}}
\newcommand{\sG}{{\mathscr G}}

\newcommand{\e}{\epsilon}
\newcommand{\ve}{\varepsilon}
\newcommand{\al}{\alpha}

\newcommand{\p}{\partial}

\newcommand{\Cut}{\operatorname{Cut}}
\newcommand{\Jac}{\operatorname{Jac}}

\newcommand{\Ric}{\operatorname{Ric}}
\newcommand{\Sec}{\operatorname{Sec}}
\newcommand{\Sym}{\operatorname{Sym}}

\newcommand{\D}{\nabla}

\newcommand{\La}{\Delta}

\newcommand{\vol}{\operatorname{Vol}}
\newcommand{\diam}{\operatorname{diam}}

\newtheorem{thm}{Theorem}[section]
\newtheorem{lemma}[thm]{Lemma}
\newtheorem{cor}[thm]{Corollary}
\newtheorem{remark}[thm]{Remark}
\newtheorem{prop}[thm]{Proposition}
\newtheorem{definition}[thm]{Definition}

\newtheorem*{notation}{Notation}

\theoremstyle{definition}

\begin{document}
\title[Second derivative estimates for elliptic operators on Riemannian manifolds]{   Second derivative estimates for    uniformly   elliptic operators  on Riemannian manifolds}
 
\author[Soojung Kim]{Soojung Kim}
\address{Soojung Kim :
National Institue for Mathematical Sciences, 
70 Yuseong-daero 1689 beon-gil, Yuseong-gu,   
Daejeon, 306-390,  Republic of Korea  }
\email{soojung26@nims.re.kr, soojung26@gmail.com }


\maketitle
\begin{abstract}
In    this paper, we   obtain   a  uniform $W^{2,\ve}$-estimate  of solutions to the fully nonlinear uniformly  elliptic equations 
on Riemannian manifolds with a lower bound of sectional curvature using the ABP method.  
 \end{abstract}

\tableofcontents 

\section{Introduction}

 We   study    regularity  estimates for  solutions to   a class of   the fully nonlinear uniformly  elliptic equations  
 \begin{equation}\label{eq-main}
F(D^2u,x)=f\quad\mbox{in $B_R(z_0)\subset M.$}
\end{equation}
on a complete Riemannian manifold $M,$ where the operator $F$ satisfies the hypothesis  \eqref{Hypo1}.  Under the assumption that   
sectional  curvature    of $M$ is nonnegative,
the Krylov-Safonov Harnack estimate \cite{KS} was initiated by Cabr\'e in his paper \cite{Ca}, where 
  a priori  global Harnack inequality for  linear elliptic equations 
  was established by  obtaining the Aleksandrov-Bakelman-Pucci (ABP) estimate on $M.$ 
   Later, Kim \cite{K} improved Cabr\'e's result removing the sectional curvature assumption and imposing the certain conditions on the squared distance  function. Recently, Wang and Zhang \cite{WZ} proved a version of the ABP estimate on $M$ with a lower bound of Ricci curvature, and  hence a locally uniform Harnack inequality for nonlinear  elliptic operators on $M$ provided that  the sectional curvature is bounded from below.  A priori Harnack estimates   have been extended in \cite{KL} for viscosity solutions       using the regularization of  JensenÕs sup-convolution on Riemannian manifolds.  
  The H\"older continuity is obtained as  an immediate consequence of the Harnack inequality.     In \cite{KKL, KL}, the parabolic Harnack inequality  and the ABP-Krylov-Tso type estimate were  established    in the Riemannian setting. 

In this paper, we   investigate    a  uniform $W^{2,\ve}$-regularity (for some $\varepsilon>0$) of solutions to \eqref{eq-main} on Riemannian manifolds with a lower bound of sectional curvature. 
In the Euclidean space,  a uniform $W^{2,\varepsilon}$-estimate (for some        $\varepsilon>0$)  for   linear, nondivergent  elliptic operators    with measurable coefficients      was    first discovered by Lin \cite{L}.   It is known that for any $p\geq 1,$    a uniform    $W^{2,p} $-estimate  for  uniformly  elliptic equations  with measurable coefficients is not valid; see \cite{PT,U}.   
    In  \cite{C}    \cite[Chapter 7]{CC}, Caffarelli  dealt with $W^{2,\varepsilon}$-estimates for fully nonlinear elliptic operators, where 
the ABP estimate    is a  keystone   in the proof   
together with    the Calder\'on-Zygmund technique.  The  ABP estimate   proved by Aleksandrov, Bakelman, and Pucci  in sixties   
has played a crucial role in the Krylov-Safonov theory for   nondivergent  elliptic equations  with measurable coefficients, and in the development of  the regularity theory for fully nonlinear equations.    

Making use of the ABP type estimate on Riemannian manifolds, we  follow Caffarelli's approach to extend   $W^{2,\varepsilon}$-estimates  for fully nonlinear   elliptic operators on Riemannian manifolds under the assumption that sectional curvature is  bounded from below.    
It can be   checked  that the 
a straightforward adaptation of the Euclidean method     yields    the $W^{2,\varepsilon}$-estimate on   Hadamard manifolds which are    complete and simply-connected Riemannian manifolds with nonpositive sectional curvature everywhere.
In general, it is not applicable directly due to  the existence of the cut locus. Indeed, 
   it is difficult to use the  squared distance  functions       as   global  test  functions as in the Euclidean case.
To proceed with  the ABP method,  
 we introduce the notion of the  special contact set in Definition \ref{def-special-contact -set} which 
 consists of the points where  the solution has a global tangent function which is  a sum of   the scale invariant  barrier functions in Lemma \ref{lem-barrier} and squared distance functions.  With  the help of the  Calder\'on-Zygmund technique,  the notion of the special contact set 
  enables  us to     employ  an iterative procedure
  using    the ABP type estimate   in Proposition \ref{prop-decay-est-supersol}.      
  Therefore we  deduce  a (locally) uniform $W^{2,\varepsilon }$-estimate for    a class of  solutions to  the fully nonlinear   uniformly elliptic equations  in Definition \ref{def-class-solutions} which  includes    the solutions to \eqref{eq-main}.  

\begin{thm}[$W^{2,\ve}$-estimate] \label{thm-W2epsilon}
Let $M$ be a complete Riemannian manifold with the sectional curvature bounded from below by $-\kappa$ 
  for $\kappa\geq0.$ 
Let $0<R\leq R_0$ and $x_0\in M$ and $f\in L^{n\eta}\left(B_{2R}(x_0)\right)$ for $ \eta:=1+\log_2\cosh (4\sqrt\kappa R_0).$  
There exist   uniform constant $\ve>0$ and $C>0$ such that if    a smooth function $u$ belongs to   $ \cS^*\left(\lambda,\Lambda,f\right)$ in 
$B_{2R}(x_0),$  
then we have that   $u\in W^{2,\ve}\left(B_R(x_0)\right)$ with the estimate
$$\left(\fint_{B_{R}(x_0)}  |u|^\ve+ \left|R \D u\right|^{\ve}+ \left|R^2D^2u\right|^\ve  \right)^{\frac{1}{\ve}} \leq C \left\{ \|u\|_{L^{\infty}\left(B_{2R}(x_0)\right)}+\left(\fint_{B_{2R}(x_0)} |R^2f|^{n\eta}\right)^{\frac{1}{n\eta}}\right\},$$
where  $\ve>0$ and $C>0$ depend only on $n,\lambda,\Lambda,$ and $\sqrt{\kappa}R_0,$ and 
we denote $\fint_Q f := \frac{1}{\vol(Q)}\int_Qf\,d\vol.$
\end{thm}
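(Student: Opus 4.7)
The plan is to adapt Caffarelli's approach (as in \cite{C} and \cite[Chapter 7]{CC}) to the Riemannian setting. In the Euclidean strategy one bounds the distribution function of $|D^2 u|$ by showing that the set of points where $u$ is touched from above and below by a paraboloid of opening $t$ has complement of measure $\lesssim t^{-\varepsilon}$, and then one recovers the $L^\varepsilon$ norm by the layer-cake formula. On $M$ the global affine/paraboloidal test functions must be replaced by the tangent functions of Definition \ref{def-special-contact -set}: sums of the scale-invariant barriers of Lemma \ref{lem-barrier} and squared distance functions. Denote by $G_t \subset B_R(x_0)$ the corresponding special contact set of opening $\leq t$; by construction, on $G_t$ one has $|D^2 u| \leq C t$ in an a.e.\ sense, so everything reduces to a decay estimate on $\vol(B_R(x_0)\setminus G_t)$.

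First I would rescale to the normalized case $R=1$ (which sends $\kappa\mapsto \kappa R^2\leq \kappa R_0^2$ and is the sole reason all constants depend on $\sqrt\kappa R_0$), divide $u$ by the right-hand side of the theorem, and assume $\|u\|_{L^\infty(B_2)}+\|f\|_{L^{n\eta}(B_2)}\leq 1$. The base step is then to apply Proposition \ref{prop-decay-est-supersol} to $\pm u\in\cS^*(\lambda,\Lambda,f)$ and combine the two one-sided statements to obtain universal constants $M_0>1$ and $\mu\in(0,1)$ with
\begin{equation*}
\vol(G_{M_0})\;\geq\; \mu\,\vol(B_1).
\end{equation*}
This is the Riemannian analogue of the single-scale Calder\'on--Zygmund lemma that drives Caffarelli's argument.

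The main work is to iterate this base step at all dyadic scales. Since cubes are unavailable on $M$, I would use a Vitali-type covering by small geodesic balls together with the scale-invariance built into the special contact set: a special tangent function of opening $M_0^k$ at scale $2^{-k}$ rescales to a special tangent function of opening $M_0$ for the rescaled equation, which is of the same form. This yields an inequality
\begin{equation*}
\vol\bigl(B_R(x_0)\setminus G_{M_0^{k+1}}\bigr)\;\leq\;(1-\mu)\,\vol\bigl(B_R(x_0)\setminus G_{M_0^k}\bigr)+C\int_{\{|f|>\delta M_0^k\}}|f|^{n\eta},
\end{equation*}
iterated in $k$; choosing $\varepsilon$ small enough that $M_0^\varepsilon(1-\mu)<1$ and summing the geometric series produces the $L^\varepsilon$ bound on $|R^2 D^2 u|$. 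The bounds on $|u|$ and $|R\nabla u|$ in the theorem are automatic: the first from the $L^\infty$ hypothesis, the second from the first-order information encoded in any special tangent function at a point of $G_{M_0^k}$. The principal obstacle is the curvature correction within the iteration: the squared distance function is not smooth past the cut locus and is concave along geodesics only up to a Hessian comparison error controlled by $\sqrt\kappa\,\coth(\sqrt\kappa r)$. This defect is exactly what the exponent $\eta=1+\log_2\cosh(4\sqrt\kappa R_0)$ is designed to absorb, by using $L^{n\eta}$ rather than $L^n$ in the ABP step so that the volume distortion accumulated across a logarithmic number of covering scales is controlled. Carefully tracking this curvature accumulation through the iteration, and in particular ensuring that each stopping-time ball lies in a region where the barriers of Lemma \ref{lem-barrier} are classical, is the most delicate part of the proof.
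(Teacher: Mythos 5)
Your proposal follows essentially the same Calder\'on--Zygmund/ABP strategy as the paper, but you misread the scope of Proposition \ref{prop-decay-est-supersol}: it is not a single-scale ``base step'' giving $\vol(G_{M_0})\geq\mu\vol(B_1)$, it is already the fully iterated power decay in $i$. The entire dyadic stopping-time argument you describe as ``the main work'' is precisely what that proposition proves, using the Christ decomposition (Theorem \ref{thm-christ}) together with the Calder\'on--Zygmund Lemma \ref{lem-CZ} and the Hardy--Littlewood maximal function (Lemma \ref{lem-max-ft}), rather than the Vitali-type ball covering you suggest. Both coverings can drive a CZ iteration, but the Christ construction supplies genuine nested dyadic cubes with predecessors, which is what Lemma \ref{lem-CZ} requires; a Vitali covering would need a separate stopping-time argument and a delicate treatment of overlaps. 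After the proposition, the paper's remaining steps are exactly Lemma \ref{lem-W2p} (passing from decay of the special contact sets to decay of $\{|R^2D^2u|>h\}$ via the two-sided touching in \eqref{eq-contact-pt-above-below}), the gradient corollary, the trivial $L^\ve$ bound on $u$ itself, and the passage from cubes to balls via Bishop--Gromov --- all of which you correctly anticipate.

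One point you flag as ``the most delicate part'' --- ensuring the barriers and squared-distance functions are classical at the stopping-time balls --- is resolved in the paper by a mechanism you do not name: Lemma \ref{lem-contact-set-tech} shows that any point in the special contact set automatically avoids all relevant cut loci, so smoothness of the tangent function $q\in\cQ_k$ at contact points is not an additional assumption to be arranged but a consequence of semi-concavity failing on $\Cut$. Without that observation, your iteration would have an unaddressed gap, since neither the barriers of Lemma \ref{lem-barrier} nor $d_{y_j}^2$ is $C^2$ across $\Cut(z_j)$ or $\Cut(y_j)$, and the Jacobian/area-formula step in Lemma \ref{lem-abp-type} needs the normal map to be Lipschitz near the contact set. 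You should make this explicit rather than defer it to ``careful tracking.''
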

  
When a Riemannian manifold has nonnegative sectional curvature, i.e.,  $\kappa=0,$ the   $W^{2,\varepsilon}$-estimate is global,    and  depends only on   dimension $n,$ and the ellipticity constants $\lambda,$ and $\Lambda.$
 
 
\section{Preliminaries}

Throughout  this paper, let  $(M, g)$ be a smooth, complete Riemannian manifold of dimension $n$,  where $g$ is the  Riemannian metric. A Riemannian metric defines a scalar product and a norm on each tangent space, i.e., $\langle X,Y\rangle_x:=g_x(X,Y)$  and $|X|_x^2:=\langle X,X\rangle_x$ for $X,Y\in T_xM$,  where $T_x M$ is the tangent space at $x\in M$.  Let $d(\cdot,\cdot)$ be the Riemannian distance   on $M$.  For a given point $y\in M$,   $d_y(x)$  denotes  the distance   to $x$ from $y$,  i.e., $d_y(x):=d(x,y)$.   
A Riemannian manifold is equipped with     the Riemannian measure $\vol=\vol_g$ on $M$ which  is     denoted  by   $|\cdot|$  for simplicity. 

For a smooth function  $u:M\to\R,$  the gradient $\D u$ of $u$ is defined by 
$$\langle\D u, X \rangle := du(X) $$
for any vector field $X$ on $M,$ where $du:TM\to\R$ is the differential of $u.$
The Hessian $D^2u$ of $u$  is defined as 
$$D^2u  \,  (X, Y):=  \left\langle\nabla_X \nabla u, Y\right\rangle,$$
 for any vector fields $X, Y$ on $M,$ where $\D$ denotes the Riemannian connection of $M.$   We observe that the Hessian $D^2u$ is a  symmetric 2-tensor over $M,$   and $D^2u(X,Y)$  at $x\in M$ depends only on  the values $X, Y$ at $x,$ and  $u$   in a small neighborhood of $x.$ 
 By the metric,  the Hessian of $u$ at $x$  is canonically identified with  
 a  symmetric endomorphism of $T_xM$: 
 $$
D^2u(x)  \cdot X=  \nabla_X \nabla u,\quad\forall X\in T_xM. 
$$
We will   write $D^2u(x)  \,  (X, Y)=\left\langle D^2u (x)\cdot X,\, Y\right\rangle$ for $X\in T_xM.$ 
In terms of local coordinates  $\left(x^i\right) $ of $M,$ 
the components of $D^2u$  are written by 
$$\left(D^2u\right)_{ij}= \frac{\p^2u}{\p x^i\p x^j} -\Gamma_{ij}^k\frac{\p u}{\p x^k} $$
  where $\left\{\Gamma^k_{ij}\right\}$ are the Christoffel symbols of the Riemannian connection $\D$ of $M.$  Here and in what follows, we adopt  the Einstein summation convention.  
In  \cite{H} (see   \cite{Au}), the norm of  the Hessian of $u,$ $|D^2u|$ is defined  in local coordinates  by
$$|D^2u|^2:=g^{ir}g^{js}\left(D^2u\right)_{ij}\left(D^2u\right)_{rs}.$$

Denote by $R$   the Riemannain curvature tensor   defined as
$$ R(X, Y)Z = \D_X \D_YZ- \D_Y\D_XZ  -\D_{ [X,Y ]}Z$$
for any vector fields $X, Y,Z$ on $M.$ 
For 
 two linearly independent  vectors $X,Y\in T_xM,$       the sectional curvature of the
plane generated by $X$ and $Y$  is defined as 
 $$\Sec(X,Y):=\frac{\langle R(X, Y)X,Y\rangle}{|X|^2|Y|^2-\langle X,Y\rangle^2}.$$
 The   Ricci curvature  tensor denoted by $\Ric$ is defined as  follows: for a unit vector $X\in T_xM$ and    an orthonormal basis $\{X,e_2,\cdots,e_n\}$ of $T_xM,$ $$\Ric(X,X):=\sum_{j=2}^n \Sec(X,e_j).$$  
  l,   $\Ric\geq \kappa$ on $M\,\,\,(\kappa\in\R)$ stands  for   $\Ric_x \geq \kappa g_x $ for all $x\in M.$ 
  We refer to \cite{D,Le} for Riemannian geometry.   

Assuming the Ricci curvature to be  bounded from below,    Bishop-Gromov's volume comparison theorem  says that the volume of balls does not increase faster than the volume of balls in the model space (see \cite{V} for instance).  In particular,   the volume     comparison   implies the following   (locally uniform) volume doubling property. 
 \begin{thm}[Bishop-Gromov] \label{thm-BG}
Assume that   $\Ric 
  \geq -(n-1)\kappa$  on $M$ for $\kappa\geq0.$ 
 For any $0<r < R,$ we have 
  \begin{equation}\label{eq-doubling}
  \frac{\vol(B_{2r}(z))}{\vol(B_r(z))}\leq 2^n\cosh^{n-1}\left(2\sqrt{\kappa}R\right)=:\cD,
  \end{equation}
    where    $\cD $  is the so-called doubling constant. 
  \end{thm}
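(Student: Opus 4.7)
The plan is to deduce the volume doubling inequality from the classical Bishop-Gromov volume comparison theorem, followed by an elementary estimate on the model space. Let $V_{-\kappa}(r)$ denote the volume of a geodesic ball of radius $r$ in the simply-connected $n$-dimensional space form of constant sectional curvature $-\kappa$, that is
$$V_{-\kappa}(r) = \omega_{n-1} \int_0^r s_\kappa(t)^{n-1}\,dt, \qquad s_\kappa(t) := \frac{\sinh(\sqrt{\kappa}\, t)}{\sqrt{\kappa}},$$
with the convention $s_0(t) = t$, where $\omega_{n-1}$ denotes the $(n-1)$-volume of the unit sphere $S^{n-1}\subset\R^n$.

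First I would invoke Bishop-Gromov in its sharper monotonicity form: under the Ricci lower bound $\Ric \geq -(n-1)\kappa$, the map $r \mapsto \vol(B_r(z))/V_{-\kappa}(r)$ is non-increasing on $(0,\infty)$ for every $z \in M$, as recorded in \cite{V}. Evaluating at radii $r$ and $2r$ immediately yields
$$\frac{\vol(B_{2r}(z))}{\vol(B_r(z))} \;\leq\; \frac{V_{-\kappa}(2r)}{V_{-\kappa}(r)}.$$

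Next, a direct change of variable $s = 2t$ in the integral defining $V_{-\kappa}(2r)$, together with the identity $\sinh(2x) = 2\sinh(x)\cosh(x)$, gives
$$V_{-\kappa}(2r) \;=\; 2^n\, \omega_{n-1} \int_0^r s_\kappa(t)^{n-1}\, \cosh^{n-1}(\sqrt{\kappa}\, t)\,dt.$$
Since $r < R$, for every $t \in [0, r]$ we have $\cosh(\sqrt{\kappa}\, t) \leq \cosh(\sqrt{\kappa}\, r) \leq \cosh(2\sqrt{\kappa}\, R)$; pulling this bound out of the integral leaves
$$V_{-\kappa}(2r) \;\leq\; 2^n \cosh^{n-1}(2\sqrt{\kappa}\, R) \cdot V_{-\kappa}(r),$$
which combined with the monotonicity inequality produces $\cD$ as claimed.

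There is no genuine obstacle here, since both the Bishop-Gromov monotonicity and the double-angle identity are standard; the only point of care is to arrange the $\cosh$ bound in terms of the ambient scale $R$ rather than the local scale $r$, so that the resulting doubling constant depends only on $n$ and $\sqrt{\kappa}\, R$, which is the form needed for the locally uniform estimates used later in the paper. As a sanity check, setting $\kappa = 0$ reduces $\cosh \equiv 1$ and recovers the Euclidean doubling constant $2^n$.
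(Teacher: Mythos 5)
Your proof is correct. In fact, the paper does not prove this statement at all---it simply records the doubling bound as a consequence of the Bishop--Gromov comparison theorem and cites \cite{V}---so the right comparison is that you have supplied the standard derivation the paper leaves implicit. Both steps you use (the monotonicity of $r\mapsto \vol(B_r(z))/V_{-\kappa}(r)$ under $\Ric\geq -(n-1)\kappa$, and the change of variables $u=2t$ with $\sinh(2x)=2\sinh x\cosh x$) are exactly what one would expect behind the stated constant; your computation in fact yields the slightly sharper bound $2^n\cosh^{n-1}(\sqrt\kappa\, r)$, which you then relax to $2^n\cosh^{n-1}(2\sqrt\kappa R)$ to match the paper's normalization. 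A small remark: the more naive route---bounding $\sqrt{\kappa}\,t \leq \sinh(\sqrt{\kappa}\,t)\leq \sqrt{\kappa}\,t\cosh(\sqrt{\kappa}\,t)\leq \sqrt{\kappa}\,t\cosh(2\sqrt{\kappa}R)$ over $t\in[0,2r]$ and then comparing to the Euclidean integrals---reaches the same constant without the double-angle identity and makes the factor $2\sqrt\kappa R$ (rather than $\sqrt\kappa R$) appear naturally, which may be what the authors had in mind; but your version is cleaner and equally valid.
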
 
One can check  that the doubling property \eqref{eq-doubling}  yields that for any $0<r< R< R_0,$  
  \begin{equation*}
  \frac{\vol\left(B_{R}(z)\right)}{\vol\left(B_r(z)\right)}\leq  \cD\, \left(\frac{R}{r}\right)^{\log_2\cD},
  \end{equation*}
  for $\cD:= 2^n\cosh^{n-1}\left(2\sqrt{ \kappa}R_0\right).$
    According to the volume   comparison, it is easy to prove the following lemma. 
Below and hereafter, we denote $\fint_Q f := \frac{1}{ |Q|}\int_Qf\,d\vol.$

\begin{lemma}\label{lem-weight-int-ellip}
Assume that   for any  $z\in M$ and $0 <r < 2R_0,$ there exists   a doubling constant $\cD>0$ such that
$$\vol\left(B_{2r}(z)\right)\leq \cD \vol\left(B_r(z)\right).$$
Then we have that  for any $B_{r}(y)\subset B_R(z)$ with $0<r <  R <R_0, $  
\begin{equation}\label{eq-weight-int-ellip}
\left\{\fint_{B_r(y)}\left|r^2f\right|^{n\eta}\right\}^{\frac{1}{n\eta}}\leq  2 \left\{\fint_{B_R(z)}\left|R^2f\right|^{n\eta}\right\}^{\frac{1}{n\eta}};\qquad \eta:= \frac{1}{n}\log_2\cD.
\end{equation} 
In particular, if  the sectional curvature of $M$ is bounded from below by $-\kappa$ ($\kappa\geq0$),    then \eqref{eq-weight-int-ellip} holds for $\eta := 1+\log_2 \cosh(4\sqrt{\kappa}R_0). $ 
\end{lemma}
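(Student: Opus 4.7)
The plan is to reduce the assertion to a single volume-ratio inequality and then establish that by iterating the doubling hypothesis around $y$. First, since $B_r(y)\subset B_R(z)$, monotonicity of the integral together with pulling the scaling factors $r^2$ and $R^2$ outside the averaging gives
$$
\left\{\fint_{B_r(y)}|r^2f|^{n\eta}\right\}^{1/(n\eta)}
\le \frac{r^2}{R^2}\left(\frac{|B_R(z)|}{|B_r(y)|}\right)^{1/(n\eta)}
\left\{\fint_{B_R(z)}|R^2f|^{n\eta}\right\}^{1/(n\eta)},
$$
so the lemma reduces to proving $(r^2/R^2)(|B_R(z)|/|B_r(y)|)^{1/(n\eta)}\le 2$.

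Next I would sharpen the ambient containment: $B_r(y)\subset B_R(z)$ forces $d(y,z)\le R-r$, and therefore $B_R(z)\subset B_{2R-r}(y)$. Setting $k=\lceil\log_2((2R-r)/r)\rceil$, each intermediate radius $2^{j-1}r$ (for $1\le j\le k$) stays below $2R-r<2R_0$, so the doubling hypothesis applies at every step; iterating it yields
$$
|B_R(z)|\le |B_{2R-r}(y)|\le \cD^k|B_r(y)|\le\left(\frac{2(2R-r)}{r}\right)^{n\eta}|B_r(y)|,
$$
where the last inequality uses $k\le\log_2(2(2R-r)/r)$ and the identity $\cD=2^{n\eta}$. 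Plugging this in and writing $t=r/R\in(0,1)$,
$$
\frac{r^2}{R^2}\left(\frac{|B_R(z)|}{|B_r(y)|}\right)^{1/(n\eta)}
\le \frac{r^2}{R^2}\cdot\frac{2(2R-r)}{r}=4t-2t^2\le 2,
$$
the last step being the maximum of $4t-2t^2$ over $[0,1]$.

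The one delicate point is nailing down the constant $2$. Using the crude inclusion $B_R(z)\subset B_{2R}(y)$ in place of $B_R(z)\subset B_{2R-r}(y)$, the same iteration produces only $4t\le 4$. The improvement to the concave expression $4t-2t^2$, and hence to the sharp constant $2$, comes precisely from the $r$-worth of slack between $y$ and the boundary of $B_R(z)$; without exploiting it, the doubling constant $\cD=2^{n\eta}$ contributes an extra factor of $2$ that cannot be absorbed.

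For the final assertion, a lower sectional-curvature bound by $-\kappa$ gives $\Ric\ge -(n-1)\kappa$, so Theorem~\ref{thm-BG} with outer radius $2R_0$ (covering all doublings at scales $r<2R_0$) yields $\cD=2^n\cosh^{n-1}(4\sqrt{\kappa}R_0)$. Then $\tfrac{1}{n}\log_2\cD=1+\tfrac{n-1}{n}\log_2\cosh(4\sqrt{\kappa}R_0)\le 1+\log_2\cosh(4\sqrt{\kappa}R_0)=\eta$, and since enlarging $\eta$ only weakens the doubling-based estimate above, \eqref{eq-weight-int-ellip} continues to hold with the stated value of $\eta$.
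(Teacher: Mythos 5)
Your reduction to the volume-ratio inequality $\frac{r^2}{R^2}\bigl(|B_R(z)|/|B_r(y)|\bigr)^{1/(n\eta)}\le 2$, the doubling iteration centered at $y$, and the monotonicity-in-$\eta$ argument for the ``in particular'' clause are all set up correctly. The gap is the assertion that $B_r(y)\subset B_R(z)$ forces $d(y,z)\le R-r$. That implication is a Euclidean fact: its proof requires exhibiting a point $x$ with $d(x,y)$ close to $r$ and $d(x,z)$ close to $d(y,z)+r$, and on a complete Riemannian manifold such a point need not exist when $y$ lies on the cut locus of $z$. Concretely, on a flat cylinder $\R\times(\R/c\Z)$ take $z=(0,0)$ and $y=(0,c/2)$, so $d(y,z)=c/2$; since $d(x,z)<\sqrt{r^2+(c/2)^2}$ for every $x\in B_r(y)$, the containment $B_r(y)\subset B_R(z)$ holds whenever $R\ge\sqrt{r^2+(c/2)^2}$, yet $d(y,z)>R-r$ for all $R<c/2+r$, and these two ranges overlap. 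Antipodal points on a round sphere give a compact example. Both manifolds have $\Sec\ge 0$ and the volume doubling property, so they are squarely within the lemma's hypotheses. As you yourself flag, once the sharpened containment is unavailable the only robust inclusion is $B_R(z)\subset B_{2R}(y)$ (from $d(y,z)<R$), and your computation then gives the prefactor $4r/R\le 4$, not $\le 2$. So the lemma with constant $2$ implicitly needs the extra Euclidean hypothesis $d(y,z)\le R-r$ — which does hold in the paper's actual applications, where the balls arise from Lemma~\ref{lem-decomp-cube-ball} — or the constant should be weakened to $4$, which is harmless since downstream it merely rescales $\e$ in Proposition~\ref{prop-decay-est-supersol}. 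To make your write-up rigorous, either add $d(y,z)\le R-r$ as a hypothesis, or run the argument with the safe inclusion and accept the constant $4$.
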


A Hessian bound for the squared distance function is the following lemma which is proved  in \cite[Lemma 3.12]{CMS}  making use of  the formula for the second variation of energy  provided  that the sectional curvature is bounded from below.  


 \begin{lemma}\label{lem-hess-dist-sqrd}
 Let $x, y\in M.$  If $\Sec
  \geq -\kappa\,\, ( \kappa\geq0)$ along a minimizing geodesic  joining $x$ to $y,$ then for any unit vector $X\in T_xM,$ 
  \begin{equation*}
  \limsup_{r\to0}\frac{d_y^2\left(\exp_xrX\right)+d_y^2\left(\exp_x -rX\right)-2d^2_y(x)}{r^2}\leq 2
  \sH\left( \sqrt{\kappa}d_y(x)\right),
  \end{equation*} 
  where $\sH(t):=t\coth(t)$ for $t\geq0. $
  \end{lemma}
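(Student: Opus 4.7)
The plan is to apply the second variation of energy formula to a carefully chosen one-parameter variation of the minimizing geodesic from $y$ to $x$, exploiting that the length of any admissible curve dominates the distance between its endpoints.

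Concretely, let $\gamma\colon[0,1]\to M$ be a constant-speed minimizing geodesic with $\gamma(0)=y$, $\gamma(1)=x$, and $|\dot\gamma|\equiv L:=d_y(x)$, and set $E_1(t):=\dot\gamma(t)/L$. Decompose the unit vector $X\in T_xM$ as $X=aE_1(1)+X^\perp$ with $a:=\langle X,E_1(1)\rangle$ and $X^\perp\perp E_1(1)$, so that $a^2+|X^\perp|^2=1$, and let $W(t)$ be the parallel transport of $X^\perp$ along $\gamma$. For a smooth profile $\phi\colon[0,1]\to\R$ with $\phi(0)=0$ and $\phi(1)=1$ (to be optimized), introduce the vector field $\xi(t):=atE_1(t)+\phi(t)W(t)$ along $\gamma$ and the variation $h(r,t):=\exp_{\gamma(t)}(r\xi(t))$. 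Then $h(r,0)=y$, $h(r,1)=\exp_x(rX)$, and the energy $E(r):=\int_0^1|\p_t h(r,t)|^2\,dt$ satisfies $E(0)=L^2=d_y^2(x)$, while $d_y^2(\exp_x(rX))\leq E(r)$ by Cauchy--Schwarz applied to the length of $t\mapsto h(r,t)$.

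The linear term $rE'(0)$ in the Taylor expansion of $E$ at $r=0$ cancels in the symmetric combination $E(r)+E(-r)$, so
$$d_y^2(\exp_x(rX))+d_y^2(\exp_x(-rX))-2d_y^2(x)\leq E(r)+E(-r)-2E(0)=E''(0)\,r^2+o(r^2),$$
reducing the claim to proving $E''(0)\leq 2\sH(\sqrt\kappa L)$. Using that $E_1$ and $W$ are parallel along $\gamma$ and the usual symmetries of the curvature tensor, one computes $|D\xi/dt|^2=a^2+\phi'(t)^2|X^\perp|^2$, while the curvature contribution in the second variation formula reduces to an integral of $L^2\phi(t)^2|X^\perp|^2\Sec(E_1,W)$. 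Imposing the hypothesis $\Sec\geq-\kappa$ along $\gamma$ then gives
$$\tfrac12 E''(0)\leq a^2+|X^\perp|^2\int_0^1\bigl[\phi'(t)^2+\kappa L^2\phi(t)^2\bigr]\,dt.$$

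The final step is to optimize by choosing $\phi(t):=\sinh(\sqrt\kappa L t)/\sinh(\sqrt\kappa L)$, the normalized Jacobi profile of the space form of constant sectional curvature $-\kappa$; it satisfies $\phi''=\kappa L^2\phi$, so an integration by parts collapses the bracketed integral to $\phi'(1)=\sqrt\kappa L\coth(\sqrt\kappa L)=\sH(\sqrt\kappa L)$. Since $\sH\geq 1$ and $a^2+|X^\perp|^2=1$, we conclude $\tfrac12 E''(0)\leq\sH(\sqrt\kappa L)=\sH(\sqrt\kappa d_y(x))$, which is exactly the stated bound. The main subtlety to keep in mind is that $x$ may lie on the cut locus of $y$, so $d_y^2$ need not be smooth at $x$ and a direct "pointwise Hessian" argument would fail; the variational approach side-steps this because it only uses the one-sided inequality $d_y^2(\exp_x(rX))\leq E(r)$, which holds globally as it reflects only the fact that distance is dominated by length, together with the equality $E(0)=d_y^2(x)$. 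The construction also gives the correct Euclidean limit: as $\kappa\to 0^+$, $\sH(\sqrt\kappa d_y(x))\to 1$ and the bound becomes the familiar $2$.
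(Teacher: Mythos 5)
Your proof is correct and it is essentially the same argument as the paper's (the paper does not write out a proof but simply cites \cite[Lemma 3.12]{CMS}, which proves the bound by exactly this second-variation-of-energy computation with the hyperbolic test profile $\phi(t)=\sinh(\sqrt\kappa Lt)/\sinh(\sqrt\kappa L)$). You have supplied all the details that the reference leaves implicit, including the reduction to $E''(0)$ via the symmetric difference quotient, the separation into tangential and normal components, the integration by parts that collapses the Bochner-type integral to $\phi'(1)$, and the observation that the argument uses only the one-sided inequality $d_y^2(\exp_x rX)\leq E(r)$ and so is insensitive to $x$ lying in $\Cut(y)$; the only (vacuous) gap is that you tacitly assume $L=d_y(x)>0$ when normalizing $E_1=\dot\gamma/L$, but the case $x=y$ is trivial.
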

  
It is not difficult to obtain the following corollary 
 modifying the proof of  \cite[Lemma 3.12]{CMS} with the help of  the  monotonicity of a composed function $\psi.$
 \begin{cor}\label{cor-hess-dist-sqrd} 
 Let $\psi:\R\to\R$ be a smooth, even  function such that $\psi'(s)\geq0$ for $s\in[0,+\infty).$  
 Let $x, y\in M.$  If $\Sec
  \geq -\kappa\,\, ( \kappa\geq0)$ along a minimizing geodesic  joining $x$ to $y,$ then for any unit vector $X\in T_xM,$  
  \begin{align*}
  \limsup_{r\to0}\frac{\psi\left(d_y^2\left(\exp_xrX\right)\right)+\psi\left(d_y^2\left(\exp_x -rX\right)\right)-2\psi\left(d^2_y(x)\right)}{r^2}\\
  \leq 2\sqrt{\kappa}d_y(x)\coth\left(\sqrt{\kappa}d_y(x)\right) \psi'\left(d_{y}^2(x) \right)+4d_{y}^2(x) \psi''\left(d_{y}^2(x) \right).
  \end{align*}
  \end{cor}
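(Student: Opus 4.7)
The plan is to adapt the proof of Lemma~\ref{lem-hess-dist-sqrd} by exhibiting a \emph{smooth} upper envelope of $d_y^2$ along the variation $r\mapsto \exp_x(rX)$, and then to transfer the resulting second-order inequality to $\psi\circ d_y^2$ using that $\psi$ is non-decreasing on $[0,\infty)$.

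Concretely, I would use the same variation as in \cite[Lemma~3.12]{CMS}. Let $\gamma:[0,1]\to M$ be a minimizing geodesic from $y$ to $x$, so its energy equals $d_y^2(x)$, and let $\xi$ be a vector field along $\gamma$ with $\xi(0)=0$ and $\xi(1)=X$, whose scalar profile is chosen to solve the Jacobi equation of the model space of constant sectional curvature $-\kappa$. For
$$h(r,t):=\exp_{\gamma(t)}\bigl(r\,\xi(t)\bigr),\qquad r\in(-\e,\e),\ t\in[0,1],$$
let $E(r)$ denote the energy of the curve $t\mapsto h(r,t)$. Since $h(r,0)=y$ and $h(r,1)=\exp_x(rX)$, the function $F(r):=d_y^2(\exp_x rX)$ satisfies $F(r)\leq E(r)$ with equality at $r=0$. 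The first and second variation formulas, combined with $\Sec\geq -\kappa$, give
$$E'(0)=2\langle X,\dot\gamma(1)\rangle,\qquad E''(0)\leq 2\sH\!\left(\sqrt{\kappa}\,d_y(x)\right),$$
and Cauchy--Schwarz (using $|X|=1$ and $|\dot\gamma(1)|=d_y(x)$) yields $(E'(0))^2\leq 4\,d_y^2(x)$.

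The monotonicity of $\psi$ enters precisely here: $F(r)\leq E(r)$ and $F(0)=E(0)$ imply $\psi(F(\pm r))\leq\psi(E(\pm r))$ while $\psi(F(0))=\psi(E(0))$, so
\begin{align*}
\psi(F(r))+\psi(F(-r))-2\psi(F(0))\leq \psi(E(r))+\psi(E(-r))-2\psi(E(0)).
\end{align*}
Because $\psi\circ E$ is smooth in $r$, the $\limsup$ of the right-hand side divided by $r^2$ equals the honest second derivative
$$\psi'(E(0))\,E''(0)+\psi''(E(0))\,(E'(0))^2.$$
Substituting $E(0)=d_y^2(x)$ together with the bounds above, and using $\psi'(d_y^2(x))\geq 0$, produces exactly the asserted estimate.

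The main obstacle is the choice of the profile $\xi$ that delivers the sharp $\coth$-bound on $E''(0)$; this is the geometric core of \cite[Lemma~3.12]{CMS} and is inherited verbatim. A secondary point is the direction of the inequality in the term $\psi''(d_y^2(x))(E'(0))^2\leq 4\,d_y^2(x)\,\psi''(d_y^2(x))$, which is immediate once $\psi''$ is nonnegative, as is the case for the convex barrier profiles to which the corollary will be applied later in the paper.
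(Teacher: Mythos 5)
Your route coincides with what the paper intends: build the smooth envelope $E(r)$ of $F(r)=d_y^2(\exp_x rX)$ via the second-variation argument of \cite[Lemma 3.12]{CMS}, note $F\le E$ with equality at $r=0$, and push the one-sided touching through $\psi$ using $\psi'\ge 0$ on $[0,\infty)$. The variational ingredients are set up correctly: $E'(0)=2\langle X,\dot\gamma(1)\rangle$, $E''(0)\le 2\sH(\sqrt{\kappa}\,d_y(x))$, and the $\limsup$ of the second difference quotient of $\psi\circ F$ is indeed bounded by $(\psi\circ E)''(0)$ since $\psi\circ E$ is smooth.

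The gap is the final step $(E'(0))^2\,\psi''(d_y^2(x))\le 4 d_y^2(x)\,\psi''(d_y^2(x))$. Because $(E'(0))^2\le 4d_y^2(x)$, this inequality needs $\psi''(d_y^2(x))\ge 0$, which is not among the hypotheses; and your appeal to ``convex barrier profiles'' is backwards: the profile $\psi_\delta(s)=\bigl(3^2/5^2\bigr)^{-\alpha}-\bigl(s/5^2\bigr)^{-\alpha}$ used in Lemma \ref{lem-barrier} has $\psi_\delta''<0$ on $[\delta^2,\infty)$, precisely where the corollary is invoked. In fact the corollary as printed is false for $\psi''<0$: take $\kappa=0$, $x\ne y$, and a unit $X$ orthogonal to $x-y$, so $d_y^2(\exp_x(\pm rX))=d_y^2(x)+r^2$ exactly; then the left-hand side equals $2\psi'(d_y^2(x))$, while the claimed right-hand side is $2\psi'(d_y^2(x))+4d_y^2(x)\psi''(d_y^2(x))<2\psi'(d_y^2(x))$. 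The bound you can actually establish keeps the coefficient you computed, $(E'(0))^2=4\langle\dot\gamma(1),X\rangle^2$, in front of $\psi''$ (equivalently, replace $4d_y^2(x)\psi''$ by $4d_y^2(x)(\psi'')^{+}$). This weaker form still yields the uniform upper bounds needed in Lemma \ref{lem-barrier}(iv), (v) and Remark \ref{rmk-barrier}, but your write-up should not present the printed inequality as proved, and the convexity claim about $\psi_\delta$ has to be removed.
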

   
    According to   \cite[Proposition 2.5]{CMS}, 
    the cut locus of $y\in M$ is characterized   as the set of points at which   the squared distance function $d_{y}^2$ is not  smooth. We state it as a lemma  which  says   that the  semi-convexity of the squared distance functions  fails at the cut locus.
 
   \begin{lemma}\label{lem-dist-sqrd-cut}
 Let $x, y\in M.$  If $x\in \Cut(y),$ then  there is  a unit vector $X\in T_xM$ such that 
 $$\liminf_{r\to0}\frac{d_y^2\left(\exp_xrX\right)+d_y^2\left(\exp_x -rX\right)-2d^2_y(x)}{r^2} =-\infty.$$
  \end{lemma}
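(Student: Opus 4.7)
The plan is to invoke the standard dichotomy for cut locus points: if $x \in \Cut(y)$, then either (i) $y$ and $x$ are joined by at least two distinct minimizing geodesics, or (ii) $x$ is a first conjugate point of $y$ along some minimizing geodesic. In each case I would exhibit an explicit unit vector $X \in T_xM$ realizing the claimed divergence, using the first and second variations of arc length.

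For case (i), let $\gamma_1, \gamma_2 : [0,L] \to M$ be distinct unit-speed minimizing geodesics from $y$ to $x$ with $L = d(x,y)$, and set $w_i := -\dot\gamma_i(L) \in T_xM$, the backward unit tangents at $x$. These are distinct, so $\alpha := \tfrac{1}{2}|w_1 - w_2| > 0$, and I take
\begin{equation*}
X := \frac{w_1 - w_2}{|w_1 - w_2|},
\end{equation*}
which satisfies $\langle X, w_1\rangle = \alpha = -\langle X, w_2\rangle$. Applying the first variation of arc length to a natural smooth variation of $\gamma_i$ with fixed starting point $y$ and moving endpoint $\exp_x(sZ)$ yields $d_y(\exp_x(sZ)) \le L - s\langle Z, w_i\rangle + C s^2$ for $|s|$ small, with $C$ depending only on the ambient geometry near $x$. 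Applying this to $\gamma_1$ with $(s,Z) = (r,X)$ and to $\gamma_2$ with $(s,Z) = (r,-X)$, I get $d_y(\exp_x(\pm r X)) \le L - r\alpha + Cr^2$. Squaring and summing,
\begin{equation*}
d_y^2(\exp_x rX) + d_y^2(\exp_x(-rX)) - 2d_y^2(x) \le -4Lr\alpha + O(r^2),
\end{equation*}
which divided by $r^2$ tends to $-\infty$ as $r \to 0^+$.

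For case (ii), let $\gamma$ be a minimizing geodesic along which $x$ is conjugate to $y$, and pick a nontrivial Jacobi field $J$ along $\gamma$ with $J(0) = J(L) = 0$, necessarily perpendicular to $\dot\gamma$. Via the Morse index form and a cutoff perturbation of $J$ producing an endpoint displacement in the direction $X := J'(L)/|J'(L)|$, one constructs piecewise-smooth curves from $y$ to $\exp_x(\pm r X)$ of length $L - \phi(r)$ with $\phi(r)/r^2 \to +\infty$ as $r \to 0$; the same squaring-and-summing step as in case (i) then gives the $-\infty$ limit. Cleanly, one may instead invoke Proposition 2.5 of \cite{CMS}, which identifies $\Cut(y)$ with the set of non-smooth points of $d_y^2$, together with Lemma \ref{lem-hess-dist-sqrd} (semi-concavity of $d_y^2$): if no direction gave the $-\infty$ limit, $d_y^2$ would be both semi-concave and semi-convex at $x$, hence $C^{1,1}$, contradicting non-smoothness of $d_y^2$ at a cut point.

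The main obstacle is the pure-conjugate subcase of (ii), where quantifying a super-quadratic defect of $d_y$ at $x$ is more delicate than in (i) and requires careful Jacobi-field and index-form analysis near the singular fiber of $\exp_y$. I would lean on the CMS characterization to bypass this technicality, while carrying out the two-geodesic case (i) explicitly via the first-variation computation above.
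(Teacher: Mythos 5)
The paper does not supply a proof of this lemma; it is presented as a restatement of \cite[Proposition~2.5]{CMS}, whose content (in the course of its proof) gives the pointwise directional blow-up at cut points. Your two-geodesic case (i) is a correct, self-contained first-variation computation and is a nice way to make the ``corner'' case explicit; the choice $X=(w_1-w_2)/|w_1-w_2|$ with $\langle X,w_1\rangle=-\langle X,w_2\rangle=\alpha>0$ cleanly yields the $-4L\alpha/r$ divergence after squaring. Your overall plan to fall back on CMS for the pure conjugate subcase is also exactly what the paper does.

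The problem is with your ``clean'' shortcut for case (ii). You claim that if no direction gave the $-\infty$ limit, then $d_y^2$ would be both semi-concave and semi-convex at $x$, hence $C^{1,1}$, contradicting non-smoothness. This step has a gap. What you would obtain from the contradiction hypothesis is a finite lower bound on the second difference quotient \emph{at the single point $x$}, direction by direction; this is a pointwise statement and does not upgrade to semi-convexity of $d_y^2$ in a neighborhood of $x$, which is what is actually needed (together with neighborhood semi-concavity from Lemma~\ref{lem-hess-dist-sqrd}) to invoke any $C^{1,1}$ regularity criterion. Moreover, the CMS characterization is stated in terms of smoothness, and ``$C^{1,1}$ at a point'' is neither implied by the pointwise bound nor the right thing to contrast with smoothness. (The argument ``not differentiable $\Rightarrow$ some direction has $\liminf=-\infty$,'' via the superdifferential of the locally semi-concave function $d_y^2$, is sound and handles the multiple-geodesic case; but at a pure conjugate point $d_y^2$ is $C^1$, so differentiability is not the issue, and the pointwise two-sided bound at $x$ alone does not force a contradiction.) To close the conjugate case you really do need either the Jacobi field/index form analysis you sketch, or a direct citation of the step in the proof of \cite[Proposition~2.5]{CMS} that produces the pointwise blow-up --- not the statement of the proposition alone.
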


Let $M$ and $N$ be   Riemannian manifolds of dimension $n$ and  $\phi:M\to N$ be smooth.  
The Jacobian of $\phi$ is defined as 
$$\Jac\phi(x):=|\det d\phi(x)|\quad\mbox{for $x\in M$}. $$  
Now we state  the area formula, which can be  proved by using    the area formula in Euclidean space and  a partition of unity: 
 For a Lipschitz continuous  function $\phi:M\to M,$ and a  measurable set $E\subset M$, we have
\begin{equation*}
\int_E \Jac\phi \,d\vol=\int_{M} \cH^0\left(E\cap\phi^{-1}(y)\right) \,d\vol (y),\end{equation*}
where $\cH^0$ is the counting measure. 


Now, we present a  standard theorem called the weak type $(1,1)$ estimate   in the classical harmonic analysis, which will be used in the proof of our key estimate in Proposition \ref{prop-decay-est-supersol}. The proof relies  on the volume doubling property and Vitali's covering lemma;   see \cite[Chapter 1]{St} for details.
\begin{lemma}[Weak type $(1,1)$]\label{lem-max-ft}
 Assume that  $\Sec\geq -\kappa$   for $\kappa\geq0$  
and   let $x_0\in M$ and $0<R\leq R_0.$ 
For  an integrable function  $f$ in $B_{R}(x_0),$   the Hardy-Littlewood maximal operator $m_{B_{R}(x_0)}$ over $B_{R}(x_0)$ is defined  as 
$$m_{B_{R}(x_0)} \left(f\right)(z):=\sup_{z\in B_{r}(x)\subset B_{R}(x_0)}\fint_{B_r(x)}|f|\, d\vol.$$
Then  there exists a uniform constant $C_1:=2\cD^{1+\log_25}$ for   the doubling constant $\cD:= 2^n\cosh^{n-1}\left(2\sqrt{ \kappa}R_0\right)$   such that
$$\left|\left\{ z\in B_{R}(x_0): m_{B_{R}(x_0)} \left(f\right) (z)\geq  h\right\}\right|\leq \frac{C_1}{h}||f||_{L^1\left(B_R(x_0)\right)}\quad\forall h>0.$$ 
\end{lemma}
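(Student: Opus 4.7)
The plan is to establish this weak type $(1,1)$ bound by the standard harmonic analysis argument adapted to the Riemannian setting, leveraging exactly the two ingredients that are already available: the volume doubling property from Theorem \ref{thm-BG} (which yields the polynomial growth $|B_{R}(z)|/|B_{r}(z)|\leq \cD (R/r)^{\log_2\cD}$ for $0<r<R<R_0$) and a Vitali-type covering lemma valid in any doubling metric measure space. Fix $h>0$ and set $E_h:=\{z\in B_R(x_0):m_{B_R(x_0)}(f)(z)\geq h\}$. The goal is to bound $|E_h|$ by covering $E_h$ with balls on which $|f|$ has sufficiently large average and then summing via a disjointification step.

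The first step is the pointwise covering. By definition of $m_{B_R(x_0)}$, for each $z\in E_h$ there exists a ball $B(z):=B_{r_z}(x_z)\subset B_R(x_0)$ with $z\in B(z)$ and
\begin{equation*}
|B(z)|\leq \frac{1}{h}\int_{B(z)}|f|\,d\vol.
\end{equation*}
The radii $r_z$ are uniformly bounded (by $R\leq R_0$), so Vitali's covering lemma in metric spaces produces a countable, pairwise disjoint subfamily $\{B_i\}=\{B_{r_i}(x_i)\}$ such that the $5$-times enlargements $\{5B_i\}=\{B_{5r_i}(x_i)\}$ still cover $E_h$.

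The second step is to convert this covering into a volume estimate via doubling. Using the polynomial form of the doubling inequality stated after Theorem \ref{thm-BG}, each enlargement satisfies $|B_{5r_i}(x_i)|\leq \cD\cdot 5^{\log_2\cD}|B_{r_i}(x_i)|=\cD^{1+\log_2 5}|B_{r_i}(x_i)|$ (here one uses that $r_i<R\leq R_0$ so the inequality applies with constant $\cD=2^n\cosh^{n-1}(2\sqrt\kappa R_0)$, possibly after a harmless factor of $2$ to absorb the boundary case $r_i=R$, which accounts for the prefactor $2$ in $C_1$). Combining the covering with the disjointness of $\{B_i\}$ and the fact that each $B_i\subset B_R(x_0)$:
\begin{align*}
|E_h|\leq \sum_i |B_{5r_i}(x_i)|\leq \cD^{1+\log_2 5}\sum_i|B_i|\leq \frac{\cD^{1+\log_2 5}}{h}\sum_i\int_{B_i}|f|\,d\vol\leq \frac{C_1}{h}\|f\|_{L^1(B_R(x_0))}
\end{align*}
with $C_1=2\cD^{1+\log_2 5}$, as claimed.

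The only nontrivial point in the Riemannian setting is justifying the Vitali covering lemma, but since $(M,d,\vol)$ is a doubling metric measure space (on the scale $R_0$) by Theorem \ref{thm-BG}, the usual greedy selection argument goes through verbatim; no curvature-specific geometry beyond the volume comparison is used. Hence the proof is essentially the classical one from \cite[Chapter 1]{St}, with the Euclidean volume doubling constant replaced by $\cD$.
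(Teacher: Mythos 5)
Your argument is the same standard weak-type $(1,1)$ proof (Vitali covering plus Bishop--Gromov doubling) that the paper uses, so the overall strategy is fine. There is, however, a genuine gap in the first step. From $m_{B_R(x_0)}(f)(z)\geq h$ you assert the existence of a ball $B(z)\subset B_R(x_0)$ containing $z$ with $\fint_{B(z)}|f|\,d\vol\geq h$. This does not follow: $m_{B_R(x_0)}(f)(z)$ is a supremum, and $\sup\geq h$ only yields, for each $\epsilon>0$, a ball with average $>h-\epsilon$; it need not be attained. The paper's proof handles this by selecting, for each $z$, a ball with $\int_{B(z)}|f|\,d\vol>\frac{h}{2}|B(z)|$, and this is the actual source of the prefactor $2$ in $C_1=2\cD^{1+\log_2 5}$. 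Your attribution of that factor to a ``boundary case $r_i=R$'' is spurious: the $5$-covering balls are strictly inside $B_R(x_0)$, and whatever open/closed endpoint issues arise in the doubling inequality are not resolved by a factor of $2$. As written, the inequality $|B(z)|\leq h^{-1}\int_{B(z)}|f|\,d\vol$ is unjustified.

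The gap is easy to close: either follow the paper and use the threshold $h/2$, giving $C_1=2\cD^{1+\log_2 5}$; or run your argument with balls of average $>(1-\epsilon)h$ and let $\epsilon\to 0$, which in fact gives the slightly sharper constant $\cD^{1+\log_2 5}$. Everything else (the Vitali $5r$-covering lemma applies since the radii are bounded by $R\leq R_0$, the identity $\cD\cdot 5^{\log_2\cD}=\cD^{1+\log_2 5}$, and the summation over the disjoint subfamily) is correct and matches the paper's sketch.
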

For the rest of this section,  we recall the concept of the uniformly elliptic operators. 
Let   $\Sym TM$ be the bundle of symmetric 2-tensors over $M.$
An  operator  $F : \Sym TM\times M \rightarrow \R$ is said    to  be  uniformly elliptic with the so-called ellipticity constants $0<\lambda\leq\Lambda,$    if we have  
 \begin{equation}\label{Hypo1}\tag{H1}
   \lambda\,\mathrm{trace}(P_x)\leq F(S_x+P_x,x)-F(S_x,x)\leq \Lambda\, \mathrm{trace}(P_x),\quad\forall x\in M
 \end{equation}
 for  any $S\in \Sym TM,$ and   positive semi-definite $P\in\Sym TM.$ 
  As extremal cases of the uniformly elliptic operators,   Pucci's   operators   are defined as follows:   
 for any $x\in M,$ and  $S_x\in\Sym TM_x,$
\begin{align*}
\cM^+_{\lambda,\Lambda}(S_x):= \lambda\sum_{e_i<0}e_i+\Lambda\sum_{e_i>0}e_i,\\
\cM^-_{\lambda,\Lambda}(S_x):= \Lambda\sum_{e_i<0}e_i+\lambda\sum_{e_i>0}e_i,
\end{align*}
where $e_i=e_i(S_x)$ are the eigenvalues of $S_x.$ We will usually drop the subscripts $\lambda$ and $\Lambda,$ and write  $\cM^\pm.$
 When  $\lambda=\Lambda=1,$ $\cM^{\pm}$   simply coincide with the trace operator, that is,   $\cM^{\pm}(D^2u)=\La u$. We observe  that   \eqref{Hypo1}   is equivalent to the following: 
 for any $S, P\in\Sym TM,$
    \begin{equation*}
   \cM^- (P_x) \leq F(S_x+P_x,x)-F(S_x,x) \leq\cM^+ (P_x) \quad\forall x\in M.
 \end{equation*} 
The following lemma is concerned with basic properties of the Pucci operators;  see \cite[Chapter 2]{CC} for details.  
\begin{lemma}
Let $\Sym(n)$ denote the set of $n\times n$ symmetric matrices.  For $S,P\in \Sym(n),$ the followings hold: 
\begin{enumerate}[(i)]
\item 
 $$\cM^+ (S)=\sup_{A\in\cA_{\lambda,\Lambda}}\mathrm{trace} (AS),\quad\mbox{and}\quad  \cM^-(S)=\inf_{A\in\cA_{\lambda,\Lambda}} \mathrm{trace} (AS),$$
where $\cA_{\lambda,\Lambda}$ consists of  positive definite symmetric matrices in $\Sym(n),$   whose eigenvalues  lie in $[\lambda,\Lambda]$.
\item  $\cM^-(-S)=-\cM^+(S)$.
\item $\cM^{\pm}(t S)=t\cM^{\pm}(S)$ for any $t\geq0.$
\item $\cM^-(S+P)\leq  \cM^-(S)+\cM^+(P)\leq \cM^+(S+P)\leq \cM^+(S)+\cM^+(P)$.
\end{enumerate}
\end{lemma}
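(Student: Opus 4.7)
The proof plan is to establish (i) first by a diagonalization argument, then obtain (ii), (iii) from the definitions, and finally derive (iv) as a formal consequence of (i).

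For (i), I would diagonalize $S$: write $S=Q\,\mathrm{diag}(e_1,\dots,e_n)\,Q^T$ with $Q$ orthogonal. For any $A\in\cA_{\lambda,\Lambda}$, set $B:=Q^TAQ$; then $B$ has the same spectrum as $A$, so $B\in\cA_{\lambda,\Lambda}$, and by cyclicity $\mathrm{trace}(AS)=\mathrm{trace}(B\,\mathrm{diag}(e_i))=\sum_i B_{ii}e_i$. Since $B$ is positive definite with eigenvalues in $[\lambda,\Lambda]$, the Rayleigh quotient bound $B_{ii}=\langle Be_i,e_i\rangle\in[\lambda,\Lambda]$ gives
\[
\sum_i B_{ii}e_i\;\leq\;\Lambda\sum_{e_i>0}e_i+\lambda\sum_{e_i<0}e_i\;=\;\cM^+(S),
\]
with the analogous lower bound $\cM^-(S)$. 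Equality is attained by taking $B$ diagonal with $B_{ii}=\Lambda$ when $e_i>0$ and $B_{ii}=\lambda$ when $e_i\leq0$ (respectively the opposite choice for $\cM^-$), and then setting $A=QBQ^T\in\cA_{\lambda,\Lambda}$.

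Parts (ii) and (iii) are immediate from the definition once one observes that the eigenvalues of $-S$ are $\{-e_i\}$ and those of $tS$ are $\{te_i\}$ (for $t\geq 0$ the signs are preserved, so the roles of the two sums in the definition are not swapped).

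For (iv), I would rely entirely on the variational characterization from (i). Using subadditivity of $\sup$ and the elementary inequality $\inf_A(f+g)\leq \inf_A f+\sup_A g$, one gets
\[
\cM^-(S+P)=\inf_{A}\mathrm{trace}\bigl(A(S+P)\bigr)\leq \inf_A\mathrm{trace}(AS)+\sup_A\mathrm{trace}(AP)=\cM^-(S)+\cM^+(P),
\]
and similarly $\cM^+(S+P)\leq\cM^+(S)+\cM^+(P)$. The remaining middle inequality $\cM^-(S)+\cM^+(P)\leq\cM^+(S+P)$ follows by writing $P=(S+P)+(-S)$, applying subadditivity of $\cM^+$, and invoking (ii) to convert $\cM^+(-S)=-\cM^-(S)$.

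There is no real obstacle here: the only nontrivial content is the Rayleigh-quotient bound on the diagonal entries of the conjugated matrix $B$ in (i), after which everything else is bookkeeping. I would write (i) in detail and treat (ii)--(iv) briefly.
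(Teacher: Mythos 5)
Your proof is correct, and it is essentially the standard argument. Note that the paper itself does not supply a proof of this lemma; it simply cites \cite[Chapter 2]{CC}, so there is no in-paper argument to compare against. Your diagonalization argument for (i) (conjugating $A$ by the orthogonal matrix diagonalizing $S$ and bounding the diagonal entries of $B=Q^TAQ$ by the Rayleigh quotient) together with the deduction of (ii)--(iv) from the variational characterization is exactly the route taken in Caffarelli--Cabr\'e, so nothing is lost or gained relative to the cited source. One small stylistic remark: in (i) you could instead \emph{define} $\cM^\pm$ by the $\sup/\inf$ formula and derive the eigenvalue expression, which is how some texts present it; your direction (eigenvalue formula as the definition, variational formula as a consequence) matches the way the paper sets things up, so it is the more natural choice here.
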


In order to   study a uniform $W^{2,\varepsilon}$-regularity for a    
class of  uniformly elliptic  equations   such as \eqref{eq-main}, 
  we introduce  a  more general class  of solutions to the uniformly elliptic equations    by using the Pucci operators  as    in \cite[Chapter 2]{CC}.  We notice that  the solution to the fully nonlinear  elliptic equation \eqref{eq-main}   belongs to the class $\cS^*\left(\lambda,\Lambda, f-F(0,\cdot)\right)$ below.

\begin{definition}\label{def-class-solutions}
Let $\Omega\subset M$  be open, and  
let $0<\lambda\leq \Lambda. $ We define a class of supersolutions  $
\overline \cS\left(\lambda,\Lambda,f\right)$ by  the set of     $u\in C^2(\Omega)$  satisfying
$$\cM^-(D^2u)\leq f\quad\mbox{ a.e. in $\Omega.$} $$ 
Similarly,  a class of subsolutions $\underline\cS\left(\lambda,\Lambda,f\right)$ is defined as the set of     $u\in C^2(\Omega)$ such that 
$$\cM^+(D^2u)\geq f\quad\mbox{a.e. in $\Omega.$} $$ 
We also define $$\cS^*\left(\lambda,\Lambda,f\right):=\overline \cS\left(\lambda,\Lambda,|f|\right)\cap \underline \cS\left(\lambda,\Lambda,-|f|\right).$$ We write shortly $\overline \cS(f),\underline \cS(f), $and $  \cS^*(f)$ for  $\overline \cS\left(\lambda,\Lambda,f\right),\underline \cS\left(\lambda,\Lambda,f\right),  $ and $\cS^*\left(\lambda,\Lambda,f\right),$   
 respectively. 
\end{definition}
 

\section{Uniform $W^{2,\ve}$-estimate  for elliptic operators}
\subsection{ABP type estimate}
 We   recall the ABP type estimate on Riemannain manifolds established  by Cabr\'e \cite{Ca} that plays an important role in the proof of the Harnack inequality and $W^{2,\varepsilon}$-estimate for fully nonlinear elliptic operators in nondivergence form.  
 A  major difficulty in proving the ABP estimate on manifolds is that non-constant affine functions can not be generalized to an intrinsic notion on general manifolds. Cabr\'e \cite[Lemma 4.1]{Ca} replaced affine functions by quadratic functions which are   squared distance functions  in order to show  the ABP type estimate. On the basis of this idea, Wang and Zhang \cite{WZ} introduced the contact set defined as follows
; see   \cite{CC,S,W} for the Euclidean case.  

\begin{definition}[Contact set]\label{def-contact-set}
  Let $\Omega$ be a bounded, open set in $M$ and let $u\in C(\Omega).$ For a given $h>0$ and a compact set $E\subset M,$ the   contact set associated with $u$ of opening $h$ with  vertex set $E$ is defined by
  $$\underline\cG_h(u;E;\Omega):=\left\{x\in\Omega :    \,\inf_{ \Omega}\left(u+\frac{h}{2}d_y^2\right)=u(x)+\frac{h}{2}d_y^2(x)\quad\mbox{for some $y\in E$} \right\}.$$
 \end{definition}


\begin{remark}\label{rmk-c-convex}
{\rm
We recall the $c$-convexification of  $u\in C(\Omega)$ for a given  cost function $c(x,y),$ 
defined as 
$$\displaystyle u^{cc}(x):=\sup_{y\in E}\inf_{z\in\Omega} \left\{ u(z)+c(z,y)-c(x,y)\right\}\quad\forall x\in\Omega.$$
It is easy to check     that $u\geq u^{cc}$   and   $u^{cc}$ is continuous in $ \Omega$ if  $c$ is continuous. One can   also prove that  for $c(x,y)=\frac{h}{2}d^2(x,y),$
$$\underline\cG_h\left(u; E;\Omega\right)=\left\{x\in\Omega : u(x)=u^{cc}(x) \right\}.$$ 
 We refer to \cite[Chapter 5]{V} for more details about $c$-convex functions.    
 }
\end{remark}

To prove the  ABP type estimate, 
 Jacobian of the normal map $\phi$ on the contact set below, which corresponds to  the image of the gradient mapping  in the Euclidean space,      was computed explicitly by Cabr\'e. 
The following lemma is an   improved estimate by  Wang and Zhang \cite[Theorem 1.2]{WZ}  using   a standard theory of Jacobi fields. 
  \begin{lemma}\label{lem-WZ-jac}
Assume that $\Ric \geq -(n-1)\kappa$ for $ \kappa\geq0.$
Let $u$ be a smooth function in $\Omega\subset M.$ Define the normal map $\phi:\Omega \to M$  as 
$$\phi(x):=\exp_x  \D u(x).$$ 
For a given point $x\in\Omega,$  assume that 
$[0,1]\owns t \mapsto \exp_x t\D u(x)$ is a unique minimizing geodesic joining $x$ to  $\phi(x). $ 
 Then we have
 
\begin{equation*}
\Jac \phi(x)\leq  {\sS}^n\left(\sqrt{ {\kappa}} {|\D u(x)|}\right)\left\{\sH\left(\sqrt{ {\kappa}} {|\D u(x)|}\right)+ \frac{\La u(x)}{n}\right\}^n,
\end{equation*}  
where
$\sH(\tau)=\tau\coth(\tau),$ and $\sS(\tau)=\sinh(\tau)/\tau$ for $\tau\geq0.$
\end{lemma}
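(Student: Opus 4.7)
The plan is to realize $d\phi_x$ as the value at $t=1$ of matrix-valued Jacobi fields along $\gamma(t):=\exp_x(t\D u(x))$, $t\in[0,1]$, and then extract the determinant bound from the Riccati equation using only the Ricci lower bound (Cauchy--Schwarz on the trace). This is the step where the proof improves on Cabré's sectional-curvature argument.

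First, I would identify the differential $d\phi_x$ via Jacobi fields. For $v\in T_xM$, the variation $\alpha(s,t):=\exp_{c(s)}\bigl(t\,\D u(c(s))\bigr)$ with $c(s):=\exp_x(sv)$ satisfies $\alpha(s,1)=\phi(c(s))$, so its variation field $J_v(t):=\p_s\alpha(s,t)|_{s=0}$ is the Jacobi field along $\gamma$ with $J_v(0)=v$ and $\tfrac{D J_v}{dt}(0)=\D_v\D u=D^2u(x)\cdot v$, whence $d\phi_x(v)=J_v(1)$. Fix an orthonormal basis $\{e_i\}$ of $T_xM$ and a parallel orthonormal frame $\{E_i(t)\}$ along $\gamma$ with $E_i(0)=e_i$; the matrix $J(t)=(J_{ij}(t))$ whose $j$-th column records the components of $J_{e_j}(t)$ in $\{E_i(t)\}$ solves
\[
J''(t)+\tilde R(t)\,J(t)=0,\qquad J(0)=I,\qquad J'(0)=D^2u(x),
\]
where $\tilde R(t)$ is the symmetric $n\times n$ matrix representing $R(\cdot,\dot\gamma(t))\dot\gamma(t)$ and satisfies $\tr\tilde R(t)=\Ric(\dot\gamma(t),\dot\gamma(t))$. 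Consequently $\Jac\phi(x)=|\det J(1)|$.

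Next, because $J'(0)$ is symmetric and $J(0)=I$, a Wronskian identity shows that the shape operator $A(t):=J'(t)J(t)^{-1}$ remains symmetric and satisfies the matrix Riccati equation $A'+A^2+\tilde R=0$ on the maximal interval where $J$ is invertible. Set $\rho(t):=\tr A(t)$ and $h(t):=(\det J(t))^{1/n}>0$. Then $(\log h)'=\rho/n$, which gives $h''/h=\rho'/n+\rho^2/n^2$. Taking the trace of the Riccati equation and using Cauchy--Schwarz $\tr A^2\geq \rho^2/n$ together with the Ricci bound $\Ric(\dot\gamma,\dot\gamma)\geq-(n-1)\kappa|\D u(x)|^2$ yields $\rho'+\rho^2/n\leq (n-1)\kappa|\D u(x)|^2$, hence
\[
h''\leq \kappa\,|\D u(x)|^2\,h,\qquad h(0)=1,\qquad h'(0)=\La u(x)/n.
\]

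Finally, one-dimensional Sturm comparison against the model solution $g(t)=\cosh(\sqrt{\kappa}|\D u(x)|\,t)+\tfrac{\La u(x)}{n}\,\tfrac{\sinh(\sqrt{\kappa}|\D u(x)|\,t)}{\sqrt{\kappa}|\D u(x)|}$ gives $h(1)\leq g(1)$, which rewrites exactly as $h(1)\leq \sS(\sqrt{\kappa}|\D u(x)|)\bigl(\sH(\sqrt{\kappa}|\D u(x)|)+\La u(x)/n\bigr)$; raising to the $n$-th power delivers the stated inequality. The main obstacle is keeping $J(t)$ invertible on the whole interval $[0,1]$, so that $A$, $\rho$ and $h$ are defined throughout: the hypothesis that $t\mapsto\exp_x(t\D u(x))$ is the \emph{unique minimizing} geodesic joining $x$ to $\phi(x)$ rules out conjugate points in $(0,1)$ and forces $h>0$ there, while the possibly conjugate endpoint $t=1$ is handled by continuity (and the degenerate case $h(1)=0$ makes the inequality trivial, once one checks that the comparison function $g$ stays nonnegative on $[0,1]$ in the regime where the right-hand side of the claim is positive).
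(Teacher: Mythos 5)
The paper does not prove this lemma itself; it is cited from Wang and Zhang \cite[Theorem 1.2]{WZ}. Your approach is the standard one and, as far as I can tell, essentially theirs: identify $d\phi_x$ with the Jacobi matrix $J(t)$ along $\gamma$ with $J(0)=I$, $J'(0)=D^2u(x)$, pass to the symmetric Riccati equation for $A=J'J^{-1}$, take traces and use Cauchy--Schwarz together with the Ricci lower bound to reduce to the scalar ODE $h''\le\kappa|\D u(x)|^2h$ with $h(0)=1$, $h'(0)=\La u(x)/n$, and finish by Sturm comparison. Each of those steps is correct as written.

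There is, however, a genuine gap in the step where you keep $J(t)$ invertible on $(0,1)$. You assert that uniqueness of the minimizing geodesic ``rules out conjugate points in $(0,1)$ and forces $h>0$ there.'' That is a conflation: absence of conjugate points means the Jacobi matrix with initial data $\tilde J(0)=0$, $\tilde J'(0)=I$ stays invertible on $(0,1)$, and says nothing about the matrix $J$ with $J(0)=I$, $J'(0)=D^2u(x)$. Already in $\R^n$ ($\kappa=0$, where the geodesic hypothesis is vacuous) one has $J(t)=I+tD^2u(x)$, which degenerates at $t=-1/\lambda$ for each negative eigenvalue $\lambda$ of $D^2u(x)$. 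Concretely, take $n=2$, $u=-\tfrac32 x_1^2$, and the origin: $\D u(0)=0$, $\La u(0)=-3$, $\Jac\phi(0)=|\det\diag(-2,1)|=2$, while the claimed bound is $(1-3/2)^2=1/4$. So the estimate as literally stated is false without an additional positivity hypothesis, namely that $J(t)$ (equivalently $d(\exp_x(t\D u(x)))_x$) is positive semi-definite on $[0,1]$. In the paper's actual applications this holds automatically because $x$ is a contact point, where $D^2u(x)$ is bounded below by minus the Hessian of $\tfrac12 d^2_{\phi(x)}$, which is exactly what forces $J\ge0$ along the geodesic. With that hypothesis made explicit, your Riccati and comparison argument goes through verbatim, with $h>0$ on $[0,1)$ and the endpoint handled by continuity; what needs to change is attributing the positivity to the no-conjugate-point property, which does not supply it.
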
 
Using the Jacobian estimate of the normal map in Lemma \ref{lem-WZ-jac}, we have the following  ABP type estimate in       \cite[Lemma 4.1]{Ca} and \cite[Theorem 1.2]{WZ}.  
 \begin{lemma}[ABP type estimate \cite{Ca, WZ}]
Assume that
$\Ric \geq -(n-1)\kappa$ on $M$ for $\kappa\geq0.$ 
For  $z_0,x_0\in M$ and $0<r\leq R,$ assume that $  B_{7r}(z_0)\subset B_{R}(x_0).$   Let $u$ be a smooth function  on $    B_{R}(x_0)$ such that 
\begin{equation*}
u\geq0\quad\mbox{on $B_{R}(x_0)\setminus B_{5r}(z_0) \quad$ and $\quad \displaystyle\inf_{B_{2r}(z_0)}u\leq 1.$}
\end{equation*}
Then we have that 
\begin{equation*}
\left|B_r(z_0)\right|\leq  \int_{\underline\cG_{r^{-2}}(u)\cap B_{5r}(z_0)} {\sS}^n\left(\sqrt{ {\kappa}} r^2{|\D u|}\right)\left\{\sH\left(\sqrt{ {\kappa}} r^2{|\D u|}\right)+  \frac{r^2\La u}{n}\right\}^n.
\end{equation*}
  where $\underline\cG_{r^{-2}}(u):=\underline\cG_{r^{-2}}\left(u;\overline B_{7r}(z_0); B_{R}(x_0)\right).$
\end{lemma}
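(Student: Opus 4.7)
The strategy is the classical sliding squared-distance argument: for each vertex $y \in B_r(z_0)$ I introduce the test function
\[
v_y(x) := u(x) + \frac{1}{2r^2}\,d_y^2(x), \qquad x \in B_R(x_0),
\]
exhibit its minimizer as a contact point lying in $B_{5r}(z_0)$, and show this minimizer is sent to $y$ by the normal map $\phi(x) := \exp_x\bigl(r^2 \D u(x)\bigr)$. Bounding $|B_r(z_0)|$ via the area formula and the Jacobian estimate of Lemma~\ref{lem-WZ-jac} then yields the claim.

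\textbf{Step 1 (Localization of the minimizer).} Pick $x_1 \in B_{2r}(z_0)$ with $u(x_1)\leq 1$, which is possible since $\inf_{B_{2r}(z_0)} u \leq 1$. For $y \in B_r(z_0)$ the triangle inequality gives $d_y(x_1) \leq 3r$, so $v_y(x_1) \leq 1 + \frac{9}{2} = \frac{11}{2}$. On the other hand, for $x \in B_R(x_0)\setminus B_{5r}(z_0)$, the hypothesis $u \geq 0$ together with $d_y(x) \geq 4r$ forces $v_y(x) \geq 8$. Hence the infimum of $v_y$ over $B_R(x_0)$ is attained at a point $x^\ast = x^\ast(y) \in B_{5r}(z_0)$, and since $y \in B_r(z_0) \subset \overline{B_{7r}(z_0)}$, we have $x^\ast \in \underline\cG_{r^{-2}}(u) \cap B_{5r}(z_0)$.

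\textbf{Step 2 (Contact points avoid the cut locus; identification with $\phi$).} Being a local minimizer, $v_y$ is semi-convex at $x^\ast$, hence so is $d_y^2 = 2r^2(v_y - u)$ (up to a smooth function). By Lemma~\ref{lem-dist-sqrd-cut}, this rules out $x^\ast \in \Cut(y)$, so $d_y^2$ is smooth at $x^\ast$ with $\D(\tfrac12 d_y^2)(x^\ast) = -\exp_{x^\ast}^{-1}(y)$ and the segment $t \mapsto \exp_{x^\ast}(t\,\exp_{x^\ast}^{-1}(y))$ is the unique minimizing geodesic from $x^\ast$ to $y$. The Euler--Lagrange equation $\D v_y(x^\ast) = 0$ rewrites as $\D u(x^\ast) = \tfrac{1}{r^2}\exp_{x^\ast}^{-1}(y)$, i.e. $y = \phi(x^\ast)$, and $[0,1]\ni t \mapsto \exp_{x^\ast}(t\,r^2\D u(x^\ast))$ is then the unique minimizing geodesic from $x^\ast$ to $y$.

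\textbf{Step 3 (Area formula and Jacobian bound).} Steps 1--2 together show
\[
B_r(z_0) \subset \phi\bigl(\underline\cG_{r^{-2}}(u) \cap B_{5r}(z_0)\bigr),
\]
so the area formula (applied to the smooth map $\phi$ on $B_R(x_0)$) yields
\[
|B_r(z_0)| \leq \int_{\underline\cG_{r^{-2}}(u)\cap B_{5r}(z_0)} \Jac\phi\, d\vol.
\]
Setting $\tilde u := r^2 u$ so that $\phi(x) = \exp_x \D\tilde u(x)$, $\D\tilde u = r^2 \D u$, and $\La \tilde u = r^2 \La u$, I apply Lemma~\ref{lem-WZ-jac} to $\tilde u$ — its uniqueness hypothesis on the minimizing geodesic was verified in Step~2 — to obtain
\[
\Jac\phi(x) \leq \sS^n\bigl(\sqrt{\kappa}\,r^2 |\D u(x)|\bigr)\left\{\sH\bigl(\sqrt{\kappa}\,r^2|\D u(x)|\bigr) + \frac{r^2\La u(x)}{n}\right\}^n,
\]
which combined with the previous inequality finishes the proof.

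\textbf{Main obstacle.} The one genuinely non-Euclidean point is Step~2: one must know that each contact point lies outside the cut locus of the corresponding vertex $y$, so that $\D(d_y^2/2)(x^\ast)$ makes sense and Lemma~\ref{lem-WZ-jac} applies along a unique minimizing geodesic. This is precisely where semi-convexity at a minimum combines with Lemma~\ref{lem-dist-sqrd-cut} to rule out the cut locus; everything else is bookkeeping.
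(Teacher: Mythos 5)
The paper states this lemma and cites \cite{Ca,WZ} without reproducing a proof, so there is no in-text argument to compare against; your proposal supplies the standard Cabr\'e/Wang--Zhang argument, and it is correct. The three steps --- localize the minimizer of $v_y=u+\tfrac{1}{2r^2}d_y^2$ inside $B_{5r}(z_0)$ via the zeroth-order hypotheses, rule out the cut locus at the contact point via Lemma~\ref{lem-dist-sqrd-cut} so that $\D(d_y^2/2)(x^\ast)=-\exp_{x^\ast}^{-1}(y)$ is available and $\phi(x^\ast)=y$ along a unique minimizing geodesic, then apply the area formula and the Jacobian bound of Lemma~\ref{lem-WZ-jac} to $\tilde u:=r^2u$ --- is exactly what the cited references do, and this is also the computation invoked (with barrier functions added) in the paper's own Lemma~\ref{lem-abp-type}. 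One phrasing quibble: being a local minimizer does not make $v_y$ ``semi-convex at $x^\ast$'' in the paper's sense (which requires a one-sided Hessian bound in a neighborhood); what you actually have, and what you actually use, is the weaker pointwise fact that at the minimum the second difference quotient of $v_y$ has nonnegative $\liminf$ in every direction, so that of $d_y^2=2r^2(v_y-u)$ is bounded below since $u$ is smooth --- and that is all Lemma~\ref{lem-dist-sqrd-cut} needs. With that wording tightened, the argument is complete.
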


 Making use  of  the ABP type estimate  and well-understood barrier functions below, we will
 investigate   in Lemma \ref{lem-abp-type}   the measure   of the contact set at  which the second derivatives  of the  supersolution    have  a uniform lower bound. 
 First, we recall      from \cite{Ca}  the  barrier function  on Riemannian manifolds and its     
 scale invariant properties.  
\begin{lemma}\label{lem-barrier}
Assume that  $\Sec\geq -\kappa$   for $\kappa\geq0.$ 
For  $z_0,x_0\in M$ and $0< r\leq R\leq R_0,$ assume that $B_{7r}(z_0)\subset B_{R}(x_0).$ For  a given $\delta\in(0,1),$ there exists a continuous function $v_{\delta}=v_\delta(\cdot ;r;z_0)$ in $M$ 
defined by 
 $$v_\delta(\cdot;r;z_0):=\psi_\delta\left(\frac{d_{z_0}^2}{r^2}\right)$$  
 for a smooth, even    function $\psi_\delta:\R\to\R$ with $\psi_\delta'>0$ on $(0,+\infty),$  and  $v_\delta$ satisfies the following:
\begin{enumerate}[(i)]
 \item $v_{\delta}\geq 0$ in $M\,\setminus\,  B_{5r}(z_0)$, 
\item $v_{\delta}\leq 0$ in $B_{2r}(z_0)$,
\item {  $r^2\cM^+(D^2v_{\delta})+ (n+1)\Lambda\sH(2\sqrt{\kappa}R_0)\leq 0\ $}  in $\left(B_{7r}(z_0)\setminus  B_{ {\delta} r}(z_0)\right)\setminus\Cut(z_0) , $
\item $r^2 D^2 v_{\delta} <  C_\delta {\bf I}$ in ${\color{black}B_{R}(x_0)}\setminus \Cut(z_0),$
\item $\displaystyle \frac{1}{R}{r^2|\D v_\delta|}<C_\delta$ in ${\color{black}B_{R}(x_0)}\setminus \Cut(z_0),$
\item  $v_{\delta}\geq -C_{\delta}$ in $M,$
\end{enumerate}
where  $\sH(t):=t\coth(t)$ for  $t\geq 0. $  
Here, the constant $C_{\delta}>0$  depends only on $\delta, n,\lambda,\Lambda,$ and $\sH(2\sqrt{\kappa}R_0 )$ (independent of  $r$ and $z_0$).
\end{lemma}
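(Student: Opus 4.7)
The plan is to imitate Cabré's Euclidean barrier $|x|^{-\beta}$ intrinsically on $M$ by setting
\[
\psi_\delta(s):=B\bigl(25^{-\beta}-s^{-\beta}\bigr)\qquad\text{for } s\geq \delta^{2},
\]
and extending $\psi_\delta$ to a smooth, even function on $\R$ that is constant on $[-\delta^{2}/2,\delta^{2}/2]$ and monotone on $[0,\infty)$; the positive constants $\beta,B$ are tuned below, and the flat piece near $0$ together with $\psi_\delta(s)\to B\cdot 25^{-\beta}$ as $s\to\infty$ forces the global lower bound in (vi). Properties (i) and (ii) then follow from the location of the unique zero $s=25$ of $\psi_\delta$: on $\{d_{z_0}\geq 5r\}$ one has $s\geq 25$ hence $v_\delta\geq 0$, and on $B_{2r}(z_0)$ one has $s\leq 4$ hence $v_\delta\leq 0$.

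\textbf{Upper bound on $D^{2}v_\delta$.} Apply Corollary~\ref{cor-hess-dist-sqrd} to $\psi_\delta$: off $\Cut(z_0)\cup\{z_0\}$, for any unit $X\in T_xM$ the second difference of $v_\delta=\psi_\delta(d_{z_0}^{2}/r^{2})$ in direction $X$ is controlled by
\[
\frac{2}{r^{2}}\sH\!\bigl(\sqrt{\kappa}\,d_{z_0}\bigr)\psi_\delta'(s)+\frac{4}{r^{2}}\,s\,\psi_\delta''(s)\,\Bigl(\langle\nabla d_{z_0},X\rangle\Bigr)^{2}.
\]
With the explicit $\psi_\delta$ above one computes, for $s\geq\delta^{2}$,
\[
4s\psi_\delta''(s)+2\psi_\delta'(s)=-2B\beta(2\beta+1)\,s^{-\beta-1},
\]
which is strictly negative in the radial direction, while each tangential eigenvalue is bounded above by $2B\beta s^{-\beta-1}\sH(\sqrt{\kappa}d_{z_0})$. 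Combining these,
\[
r^{2}\cM^{+}(D^{2}v_\delta)\;\leq\;2B\beta\,s^{-\beta-1}\Bigl[(n-1)\Lambda\,\sH\!\bigl(\sqrt{\kappa}\,d_{z_0}\bigr)-\lambda(2\beta+1)\Bigr]
\]
on $(B_{7r}(z_0)\setminus B_{\delta r}(z_0))\setminus\Cut(z_0)$. Since $\sqrt{\kappa}d_{z_0}\leq 7\sqrt{\kappa}R_{0}$ on this set, choosing $\beta$ large makes the bracket strictly negative and as large (in absolute value) as needed, and then $B$ large enough yields (iii) with the prescribed additive constant $(n+1)\Lambda\sH(2\sqrt{\kappa}R_0)$ after using $s^{-\beta-1}\geq 49^{-\beta-1}$ on that annulus.

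\textbf{Remaining bounds and obstacle.} Properties (iv) and (v) come from the global smoothness of $\psi_\delta$: on $\{d_{z_0}\geq \delta r\}$ both $|\psi_\delta'(s)|$ and $|s\psi_\delta''(s)|$ are bounded by a constant depending only on $\delta,\beta,B$, while on $\{d_{z_0}<\delta r\}\subset M\setminus\Cut(z_0)$ the flat extension makes $\psi_\delta$ and its derivatives trivially bounded; combining with the tangential Hessian bound $2\sH(2\sqrt{\kappa}R_{0})$ valid on $B_{R}(x_{0})$ gives (iv), and the identity $r^{2}\nabla v_\delta=2\psi_\delta'(s)\,d_{z_0}\nabla d_{z_0}$ together with $d_{z_0}\leq 2R$ on $B_{R}(x_{0})$ gives (v). The main obstacle is the calibration in (iii): the negative radial eigenvalue, of size $B\beta^{2}s^{-\beta-1}$, must dominate \emph{both} the positive tangential contribution carrying the sectional-curvature factor $\sH(\sqrt{\kappa}d_{z_0})$ from Corollary~\ref{cor-hess-dist-sqrd} and the mandated $(n+1)\Lambda\sH(2\sqrt{\kappa}R_{0})$, uniformly across the annulus $\{\delta r\leq d_{z_0}\leq 7r\}$; that this is possible rests on the quadratic-in-$\beta$ growth of the radial eigenvalue against the linear-in-$\beta$ tangential bound.
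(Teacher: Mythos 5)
Your construction is essentially the paper's: take $\psi_\delta$ of the Cabr\'e form $\mathrm{const}_1 - \mathrm{const}_2\,s^{-\alpha}$ on $[\delta^2,\infty)$, extend evenly, split the Hessian of $v_\delta=\psi_\delta(d_{z_0}^2/r^2)$ into a negative radial eigenvalue $r^{-2}(2\psi_\delta'+4s\psi_\delta'')$ and tangential eigenvalues $\leq 2r^{-2}\psi_\delta'\,\sH(\sqrt{\kappa}d_{z_0})$ via Lemma~\ref{lem-hess-dist-sqrd}, and push the exponent large so that $\cM^+$ is dominated by the radial term. The additional multiplicative constant $B$ is a reasonable refinement: on the annulus $\{\delta r\leq d_{z_0}\leq 7r\}$ the radial term has magnitude $\sim\beta^2 s^{-\beta-1}$, which decays in $\beta$ near $s=49$, so increasing $\beta$ alone does not obviously reach the prescribed level $-(n+1)\Lambda\sH(2\sqrt{\kappa}R_0)$ once the ratio $\Lambda\sH/\lambda$ is large; the paper's sketch is terse on this calibration and your two-step choice (first $\beta$ to fix the sign, then $B$ to fix the size) is the standard way to close it.

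There is, however, a real defect in your extension near the origin. You extend $\psi_\delta$ to be \emph{constant} on $[-\delta^2/2,\delta^2/2]$, which forces $\psi_\delta'\equiv 0$ on $(0,\delta^2/2]$ and therefore violates the clause $\psi_\delta'>0$ on $(0,+\infty)$ stated in the lemma. This is not cosmetic: strict positivity of $\psi_\delta'$ is exactly what Lemma~\ref{lem-contact-set-tech} uses to transfer a lower bound on the second difference quotients of $v_\delta$ into a lower bound on those of $d_{z_0}^2$ and thereby rule out $x\in\Cut(z_0)$. If $z_0$ has small injectivity radius, a cut point of $z_0$ can sit inside the flat region, and the argument collapses. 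The fix is easy --- extend so that $\psi_\delta'(0)=0$ (forced by even smoothness) but $\psi_\delta'>0$ on all of $(0,\infty)$ --- but as written your $\psi_\delta$ does not satisfy the statement you are proving, so you should replace the flat piece.

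One smaller point: Corollary~\ref{cor-hess-dist-sqrd} as stated gives a direction-independent bound; the factor $\langle\nabla d_{z_0},X\rangle^2$ you attach to the $\psi_\delta''$ term, and the radial value $2$ (rather than $2\sH$) you use to get $4s\psi_\delta''+2\psi_\delta'$, come from the identity $D^2(\psi_\delta\circ\rho)=\psi_\delta'(\rho)D^2\rho+\psi_\delta''(\rho)\,d\rho\otimes d\rho$ together with the radial/tangential split of $D^2 d_{z_0}^2$, not from the corollary itself. The computation is right; the citation is not.
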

\begin{proof}
We give a sketch of the proof; see  \cite[Lemma 5.5]{Ca} and  \cite[Lemma 4.2]{WZ} for details. 
Fix $0<\delta<1$ and 
consider   
  $$\psi_\delta(s):=\left(\frac{3^2}{5^2}\right)^{-\al}-  \left(\frac{s}{5^2}\right)^{-\al}\quad\mbox{for $s\geq\delta^2$, }$$  for a   positive constant $ \alpha$ to be chosen later, which will depend only on $\delta, n,\lambda,\Lambda, $ and $\sH(2\sqrt{\kappa}R_0).$ 
    After fixing a large contant $\al>0,$ we will extend $\psi_\delta$ smoothly   in $\R$ to be an even function and to satisfy  that $\psi'>0$ in $(0,+\infty).$    
     Now we define
$$v_\delta(\cdot;r;z_0) :=\psi_\delta\left(\frac{d^2_{z_0}}{r^2}\right) \quad\mbox{in}\,\,\,\, M, $$ where $d_{z_0}$ is the  distance function to $z_0.$ 
 It is clear that (i), (vi) and (ii) hold for  $\al\geq 1.$

In order to check (iii), we  recall  that a closed set  $\Cut(z_0)$ has measure zero and that 
$$\cM^+\left(D^2d_{z_0}^2(x)\right)\leq 2n\Lambda \sH\left(\sqrt\kappa d_{z_0}(x)\right)\leq n\Lambda \sH\left( 2\sqrt{\kappa}R_0\right)\quad\forall  x\in B_{7r}(z_0)\setminus\Cut(z_0),
$$
from Lemma \ref{lem-hess-dist-sqrd}.  
As in the proof of  \cite[Lemma 5.5]{Ca} and  \cite[Lemma 4.2]{WZ},  we can select      $ \al>0$  sufficiently  large so  that  (ii), (iii) hold, where  $\al>0$ depends only on $\delta, n,\lambda,\Lambda, $ and $\sH(2\sqrt{\kappa}R_0)$. Lastly, 
for a fixed $\al>0,$ it is not hard to  check   (iv) and  (v)    using Corollary \ref{cor-hess-dist-sqrd} since 
$\psi_\delta'$ is bounded in $[0,+\infty),$ and $\psi_\delta''(s)\leq 0$ for $s\geq 1.$
\end{proof}

\begin{remark} \label{rmk-barrier}
{\rm
In Lemma \ref{lem-barrier} (iv), we have obtained an upper bound of $r^2 D^2v_\delta(\cdot;r;z_0)$ in $B_{R}(x_0)\setminus\Cut(z_0). $ 
 According to Corollary \ref{cor-hess-dist-sqrd}, we deduce that   for any $x\in B_{R}(x_0)$ and any unit vector  $X\in T_{x}M,$$$\limsup_{t\to0} \frac{1}{t^2}\left\{v_\delta\left(\exp_{x}tX;r;z_0\right)+v_\delta\left(\exp_{x}-tX; r;z_0\right)-2v_\delta(x;r;z_0)\right\}\leq \frac{C_\delta}{ r^2},$$
 where $C_\delta$ is the same constant as in Lemma \ref{lem-barrier}. 
}
\end{remark}


Since  the  barrier function in Lemma \ref{lem-barrier} are not smooth on the cut locus of the center   point, we need the following technical lemma  to apply the ABP type estimate to a sum of a smooth function and  the scale invariant barrier function directly  in Lemma \ref{lem-abp-type}. 

 \begin{lemma}\label{lem-contact-set-tech}
 Assume that  $\Sec\geq -\kappa$   for $\kappa\geq0.$ For    $z_0,x_0\in M$ and $0<r\leq R\leq  R_0,$  assume that $B_{7r}(z_0)\subset B_{R}(x_0).$
 Let $u$ be a smooth function on $  B_{R}(x_0),$ 
 and let $$w:=u+h\sum_{j=1}^{k}r_j^2v_\delta(\cdot;r_j;z_j) +h\sum_{j=1}^{k}\frac{1}{2} d_{y_j}^2+v_ \delta(\cdot;r;z_0)$$
 for $k\in\N\cup\{0\},$ where $h>0,$ $r_j>0,$     $ B_{7r_j}(z_j)\subset B_{R}(x_0),  y_j\in \overline B_{7r_{j}}(z_j)$ for   $j=1,\cdots,k,$ and $v_ \delta(\cdot; r;z)$ is the barrier function with respect to $r$ and $z$ as in Lemma \ref{lem-barrier}.     
 Assume that $x\in \underline\cG_{r^{-2}}\left(w; \overline B_{7r}(z_0);B_{R}(x_0) \right),$ that is, for some  $y_0\in\overline B_{7r}(z_0),$ 
 $$ \inf_{B_{R}(x_0)}\left(w+\frac{1}{2r^2}d_{y_0}^2\right)=w(x)+\frac{1}{2r^2}d_{y_0}^2(x).$$
 Then we have the following:
 \begin{enumerate}[(i)]
 \item $\displaystyle  x\not\in \bigcup_{j=0}^{k}\Cut(z_j)\cup \bigcup_{j=0}^{k}\Cut(y_j)$ 
\item $w$ is smooth at  $x, $ and satisfies that $r^2|\D w(x)|= d_{y_0}(x)<2R$    and 
 \begin{align*}
       -\sH\left(2\sqrt{\kappa}R_0\right){\bf{I}}& \leq  r^2 D^2w(x)\leq  r^2D^2u(x)+  r^2 hk\left\{C_\delta+\sH\left(2\sqrt{\kappa}R_0\right)\right\}{\bf{I}} +C_\delta{\bf I}
       \end{align*}
 where $C_\delta>0$  
 is the constant as in Lemma \ref{lem-barrier}
 \end{enumerate}
 \end{lemma}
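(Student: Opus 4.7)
My plan is to establish (i) first by exploiting the loss of semi-convexity of squared distance functions at their cut loci, then to deduce (ii) directly from the first- and second-order necessary conditions at the interior minimum $x$.

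For (i), since $x$ is an interior minimum of $w+\frac{1}{2r^2}d_{y_0}^2$ on the open set $B_R(x_0)$, for every unit vector $X\in T_xM$ one has
\begin{equation*}
\liminf_{t\to 0}\frac{\bigl(w+\tfrac{1}{2r^2}d_{y_0}^2\bigr)(\exp_xtX)+\bigl(w+\tfrac{1}{2r^2}d_{y_0}^2\bigr)(\exp_x-tX)-2\bigl(w+\tfrac{1}{2r^2}d_{y_0}^2\bigr)(x)}{t^2}\geq 0.
\end{equation*}
The smooth summand $u$ contributes a finite limit, while Lemma \ref{lem-hess-dist-sqrd} and Corollary \ref{cor-hess-dist-sqrd} (together with Remark \ref{rmk-barrier}) bound above the $\limsup$ of the second difference quotient in every direction for each term of the form $d_p^2$ or $v_\delta(\cdot;r_*;p)$, provided $x\notin\Cut(p)$. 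Suppose, for contradiction, that $x\in\Cut(p)$ for some center $p\in\{z_0,\ldots,z_k,y_0,\ldots,y_k\}$; then $p\neq x$ and $d_p(x)>0$, and Lemma \ref{lem-dist-sqrd-cut} supplies a direction $X_*$ along which the $\liminf$ of the second difference quotient of $d_p^2$ equals $-\infty$. If $p$ is one of the $z_j$, the strict positivity $\psi_\delta'>0$ on $(0,\infty)$ propagates this divergence to $v_\delta(\cdot;r_j;z_j)=\psi_\delta(d_p^2/r_j^2)$ via a first-order Taylor expansion of $\psi_\delta$ about the positive value $d_p^2(x)/r_j^2$, whose remainder is controlled by the local boundedness of $\psi_\delta''$ away from zero. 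Using $\liminf(f+g)\leq\liminf f+\limsup g$, the divergent term drags the total $\liminf$ to $-\infty$ along $X_*$, contradicting the displayed inequality. Hence (i) holds.

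Granted (i), every summand of $w$ is smooth at $x$, so $w$ itself is smooth at $x$. The first-order condition $\D w(x)+\frac{1}{r^2}\D(\tfrac{1}{2}d_{y_0}^2)(x)=0$ together with the Gauss lemma identity $\D(\tfrac{1}{2}d_{y_0}^2)(x)=-\exp_x^{-1}y_0$ yields $\D w(x)=\frac{1}{r^2}\exp_x^{-1}y_0$, whence $r^2|\D w(x)|=d_{y_0}(x)<2R$, using $x,y_0\in B_R(x_0)$. The second-order condition $D^2w(x)+\frac{1}{r^2}D^2(\tfrac{1}{2}d_{y_0}^2)(x)\geq 0$, combined with the upper Hessian bound $D^2(\tfrac{1}{2}d_{y_0}^2)(x)\leq\sH(\sqrt{\kappa}d_{y_0}(x))\mathbf{I}\leq\sH(2\sqrt{\kappa}R_0)\mathbf{I}$ from Lemma \ref{lem-hess-dist-sqrd}, gives the lower Hessian bound $r^2D^2w(x)\geq-\sH(2\sqrt{\kappa}R_0)\mathbf{I}$. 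For the upper bound, differentiate $w$ termwise at the smooth point $x$: Lemma \ref{lem-barrier}(iv) bounds $r_j^2D^2v_\delta(\cdot;r_j;z_j)(x)\leq C_\delta\mathbf{I}$ and $r^2D^2v_\delta(\cdot;r;z_0)(x)\leq C_\delta\mathbf{I}$, while Lemma \ref{lem-hess-dist-sqrd} bounds $\tfrac{1}{2}D^2d_{y_j}^2(x)\leq\sH(2\sqrt{\kappa}R_0)\mathbf{I}$ using $d_{y_j}(x)<2R\leq 2R_0$. Multiplying by $r^2$ and summing in $j=1,\ldots,k$ delivers the asserted upper bound.

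The principal technical point will be inside (i): propagating the $\liminf=-\infty$ of $d_{z_j}^2$ through the smooth composition $\psi_\delta\circ(d_{z_j}^2/r_j^2)$ and certifying that the Taylor remainder does not absorb the leading divergence. Once (i) is in hand, (ii) is routine Morse-type analysis at a smooth interior minimum combined with the uniform quantitative bounds from Lemmas \ref{lem-hess-dist-sqrd} and \ref{lem-barrier}.
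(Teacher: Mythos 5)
Your argument follows the paper's strategy closely: both proofs of (i) exploit Lemma \ref{lem-dist-sqrd-cut} (loss of semi-convexity of $d_p^2$ at cut points) against the second-order minimum condition, with the chain rule through $\psi_\delta'>0$ to pass from $v_\delta=\psi_\delta(d_{z_j}^2/r_j^2)$ back to $d_{z_j}^2$, and both derive (ii) by termwise differentiation at the now-smooth point $x$. You phrase the contradiction in contrapositive form (``a cut point would force the total $\liminf$ to $-\infty$'') while the paper isolates the liminf of one suspect term and shows it is bounded below; these are logically the same. Your treatment of (ii) correctly spells out the first-order identity $r^2\D w(x)=\exp_x^{-1}y_0$ and the termwise Hessian bounds, which the paper compresses to ``(ii) easily follows.''

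One imprecision is worth flagging because, read literally, it would make the argument circular. You write that Lemma \ref{lem-hess-dist-sqrd} and Corollary \ref{cor-hess-dist-sqrd} bound above the $\limsup$ of the second difference quotients for $d_p^2$ and $v_\delta(\cdot;r_*;p)$ \emph{provided} $x\notin\Cut(p)$. That proviso is not in the lemmas, and removing it is essential: those are semi-concavity estimates which hold for \emph{all} $x$, including cut points (only the matching lower bound on the liminf fails at the cut locus, which is precisely Lemma \ref{lem-dist-sqrd-cut}). Your step $\liminf(f+g)\le\liminf f+\limsup g$ requires a finite $\limsup$ for \emph{all} the non-singular summands $g$, and since you have not yet established that $x$ avoids the cut loci of the other centers, you must invoke the unconditional version of the $\limsup$ bound — exactly as the paper does when it writes the sums of $\limsup$'s without any cut-locus hypothesis. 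With that correction your proof is complete and matches the paper's.
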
 
 \begin{proof} 
 Once (i) is proved, (ii) easily follows from     Lemma \ref{lem-hess-dist-sqrd} and Lemma \ref{lem-barrier}. Note that $\sH$ is nondecreasing in $[0,+\infty).$  So it suffices to show that $\displaystyle  x\not\in \bigcup_{j=0}^{k}\Cut(z_j)\cup \bigcup_{j=0}^{k}\Cut(y_j).$
 We will only prove that $x\not\in\Cut(z_0)$ since the  proofs for the other cases    are      similar. 

 Suppose to the contrary that $x\in\Cut(z_0).$ 
Since $w-w(x)$ lies above  $-\frac{1}{2r^2}d_{y_0}^2+\frac{1}{2r^2}d_{y_0}^2(x)$,  we take the second difference quotient and  use Lemma \ref{lem-hess-dist-sqrd} 
  to have that for  any unit vector $X\in T_{x}M$,  
 \begin{equation*}
 \begin{split}
&\left\langle r^2 D^2u\cdot X,X\right\rangle_{x}+r^2\liminf_{t\to0} \frac{1}{t^2}\left\{v_\delta \left(\exp_{x}tX;r;z_0\right)+v_\delta  \left(\exp_{x}-tX; r;z_0 \right)-2v_\delta  \left(x;r;z_0 \right)\right\}\\
&+hr^2\sum_{j=1}^{k}r_j^2\limsup_{t\to0} \frac{1}{t^2}\left\{v_\delta \left(\exp_{x}tX;r_j;z_j\right)+v_\delta  \left(\exp_{x}-tX; r_j;z_j \right)-2v_\delta  \left(x;r_j;z_j \right)\right\} \\
&+hr^2\sum_{j=1}^{k}\limsup_{t\to0}  \frac{1}{2t^2}\left\{d_{y_j}^2(\exp_{x}tX)+d_{y_j}^2(\exp_{x}-tX)-2d_{y_j}^2(x)\right\}\\
&\geq - \limsup_{t\to0}\frac{1}{2t^2}\left\{d^2_{y_0}(\exp_{x}tX)+ d^2_{y_0}(\exp_{x}-tX)-2d^2_{y_0}(x)\right\}\\
&\geq -\sH\left(\sqrt{\kappa}d(x,y_0)\right)\geq \sH\left(2\sqrt{\kappa}R_0\right).
\end{split}
 \end{equation*}
 Combined with  Remark \ref{rmk-barrier} and Lemma \ref{lem-hess-dist-sqrd} , this implies that for any unit vector  $X\in T_xM$, 
 $$r^2\liminf_{t\to0} \frac{1}{t^2}\left\{v_\delta  \left(\exp_{x}tX;r;z_0 \right)+v_\delta  \left(\exp_{x}-tX; r;z_0 \right)-2v_\delta  \left(x;r;z_0 \right)\right\}\geq -C>-\infty.$$
  Using  strict monotonicity  of $\psi_\delta $ in Lemma \ref{lem-barrier},  we find  a positive constant $c_0>0$ such   that for   small  $|t|\in(0,1)$ 
  \begin{align*}
  & \frac{v_\delta  \left(\exp_{x}tX;r;z_0 \right)-v_\delta  \left(x;r;z_0 \right)}{d_{z_0}^2(\exp_{x}tX)-d_{z_0}^2(x)}\leq 2r^2 {\psi'\left(\frac{d_{z_0}^2(x)}{r^2}\right)}=:c_0,
  \end{align*}
   where we notice that $x\not\eq z_0$ and hence  $\psi'\left(d_{z_0}^2(x)/r^2 \right)$  is positive  since we assume  $x\in\Cut(z_0).$  
Thus we   deduce that   for any unit vector  $X\in T_xM,$ 
\begin{align*}
&\liminf_{t\to0} \frac{d^2_{z_0}(\exp_{x}tX)+ d^2_{z_0}(\exp_{x}-tX)-2d^2_{z_0}(x)}{t^2}\\
&\geq \frac{1}{c_0}\liminf_{t\to0}\frac{ v_\delta \left(\exp_{x}tX;r;z_0 \right)+ v_\delta \left(\exp_{x}-tX;r;z_0 \right)-2v_\delta\left(x;r;z_0 \right)}{t^2}\geq -\frac{C}{c_0r^2}>-\infty,
\end{align*}
which contradicts to the assumption that $x \in\Cut(z_0)$ from Lemma \ref{lem-dist-sqrd-cut}.
 Therefore,  $x$ is not a cut point of $z_0,$ which finishes the proof.  
\end{proof}

 In the following,  we obtain  the   measure estimate  of  the   contact set that consists of   points, where   $u+v_\delta $ has a global tangent concave paraboloid from below.    
      \cite[Lemma 5.1]{Ca}, \cite[Lemma 3.1]{K} and   \cite[Proposition 4.1]{WZ}   dealt with 
  estimates of the measure of the  level sets of the solution $u$ to establish    
    pointwise estimates; the Harnack  estimate and the weak Harnack inequality.   
  In order to  study the  bound of the second derivatives of the solution, 
  we keep the barrier function in  the estimate of   Lemma \ref{lem-abp-type}.  This  is the first step to  estimate  the distribution function of $|D^2u|,$ the norm of  the Hessian. 
 

 \begin{lemma}\label{lem-abp-type}
   Assume that  $\Sec\geq -\kappa$   for $\kappa\geq0.$ 
For  $z_0,x_0\in M$ and $0<r\leq R\leq R_0,$ assume that $  B_{7r}(z_0)\subset B_{R}(x_0).$   Let $u$ be a smooth function  on $ \overline  B_{R}(x_0)$ such that  $u\in\overline \cS\left(\lambda,\Lambda, f\right)$
 in $B_{7r}(z_0),$
\begin{equation}\label{eq-abp-cond-u}
u\geq0\quad\mbox{on $B_{R}(x_0)\setminus B_{5r}(z_0) \quad$ and $\quad \displaystyle\inf_{B_{2r}(z_0)}u\leq 1.$}
\end{equation}
For a given $\delta\in(0,1),$ there exist   uniform constants $\e_\delta>0$ and $\mu_\delta>0$ depending only on $\delta,n, \lambda,\Lambda$ and $\sqrt{\kappa}R_0,$ such that if
  $$\left(\frac{1}{|B_{7r}(z_0)|}\int_{\underline\cG_{r^{-2}}\left(u+v_\delta(\cdot;r;z_0)\right)\cap B_{7r}(z_0)} |r^2 f^+|^{n\eta}\right)^{\frac{1}{n{\color{black}\eta}}}\leq \e_\delta;\quad\eta:=1+\log_2\cosh (4\sqrt\kappa R_0),$$
  then 
  $$\frac{\left|\underline\cG_{r^{-2}}\left(u+v_\delta(\cdot; r;z_0)\right)\cap B_{\delta r}(z_0)\right|}{ |B_{7r}(z_0)|} \geq \mu_\delta>0,$$
  where $\underline\cG_{r^{-2}}(u+v_\delta(\cdot; r;z_0)):=\underline\cG_{r^{-2}}\left(u+v_\delta(\cdot; r;z_0);\overline B_{7r}(z_0); B_{R}(x_0)\right)$ and $v_\delta(\cdot;r;z_0)$ is as in Lemma \ref{lem-barrier}.  
  \end{lemma}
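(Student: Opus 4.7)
The plan is to apply the ABP-type estimate to the auxiliary function $w := u + v_\delta(\cdot;r;z_0)$, and then split the resulting integral over the contact set into a core piece inside $B_{\delta r}(z_0)$, which will carry the measure bound, and an outer piece in $B_{5r}(z_0)\setminus B_{\delta r}(z_0)$, which will be absorbed using the $L^{n\eta}$-smallness of $r^2 f^+$. First I would verify the hypotheses of the ABP estimate: items (i) and (ii) of Lemma~\ref{lem-barrier} give $w\geq 0$ on $B_R(x_0)\setminus B_{5r}(z_0)$ and $\inf_{B_{2r}(z_0)} w \leq 1$. Applying the ABP-type estimate then yields
\begin{equation*}
|B_r(z_0)| \,\leq\, \int_{\uncG_{r^{-2}}(w)\cap B_{5r}(z_0)} \sS^n\!\left(\sqrt{\kappa}\,r^2|\D w|\right)\left\{\sH\!\left(\sqrt{\kappa}\,r^2|\D w|\right) + \frac{r^2\La w}{n}\right\}^n,
\end{equation*}
and by Lemma~\ref{lem-contact-set-tech} (applied with $k=0$) the function $w$ is smooth at each contact point $x$, with $r^2|\D w(x)| = d_{y_0}(x) \leq 2R$ and $r^2 D^2 w(x)\geq -\sH(2\sqrt{\kappa}R_0){\bf I}$. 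In particular the negative part of the spectrum of $r^2 D^2 w$ is uniformly controlled.

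On the outer piece I would combine the supersolution inequality $\cM^-(D^2 u)\leq f$ with Lemma~\ref{lem-barrier}(iii), which gives $r^2\cM^+(D^2 v_\delta)\leq -(n+1)\Lambda\sH(2\sqrt{\kappa}R_0)$, to obtain $r^2\cM^-(D^2 w)\leq r^2 f^+ - (n+1)\Lambda\sH(2\sqrt{\kappa}R_0)$. Decomposing $r^2 D^2 w$ into its positive and negative spectral parts and feeding in the uniform bound on the negative part rearranges this to $\lambda\, r^2\sum_{e_i>0} e_i(D^2 w) \leq r^2 f^+ - \Lambda\sH(2\sqrt{\kappa}R_0)$. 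Since the left-hand side is nonnegative, this forces $r^2 f^+\geq \Lambda\sH$ pointwise on the outer contact set, and simultaneously bounds the ABP integrand there by a uniform multiple of $(r^2 f^+)^n$. H\"older's inequality with exponent $\eta$ (using $\eta\geq 1$) together with the hypothesis $(\fint |r^2 f^+|^{n\eta})^{1/(n\eta)}\leq \e_\delta$ then gives
\begin{equation*}
\int_{\uncG_{r^{-2}}(w)\cap(B_{5r}(z_0)\setminus B_{\delta r}(z_0))} \sS^n\{\sH + r^2\La w/n\}^n \,\leq\, C\,\e_\delta^{\,n}\,|B_{7r}(z_0)|,
\end{equation*}
with $C$ uniform.

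On the core piece the upper bound Lemma~\ref{lem-barrier}(iv), $r^2 D^2 v_\delta\leq C_\delta{\bf I}$, yields $r^2\cM^+(D^2 v_\delta)\leq n\Lambda C_\delta$, so that $r^2\cM^-(D^2 w)\leq r^2 f + n\Lambda C_\delta$; the same spectral decomposition now gives $r^2\sum_{e_i>0} e_i(D^2 w)\leq C_\delta'(1+r^2 f^+)$. Consequently the ABP integrand on the core is bounded by $C_\delta''(1+r^2 f^+)^n$, and integrating produces
\begin{equation*}
\int_{\uncG_{r^{-2}}(w)\cap B_{\delta r}(z_0)} \sS^n\{\sH + r^2\La w/n\}^n \,\leq\, C_\delta''\bigl(|\uncG_{r^{-2}}(w)\cap B_{\delta r}(z_0)| + \e_\delta^{\,n}|B_{7r}(z_0)|\bigr).
\end{equation*}
Feeding the outer and core bounds into the ABP inequality and using Bishop--Gromov (Theorem~\ref{thm-BG}) in the form $|B_r(z_0)|\geq c_\kappa|B_{7r}(z_0)|$ with $c_\kappa>0$ uniform, one obtains $c_\kappa|B_{7r}(z_0)|\leq C_\delta''|\uncG_{r^{-2}}(w)\cap B_{\delta r}(z_0)|+C'\e_\delta^{\,n}|B_{7r}(z_0)|$; choosing $\e_\delta$ so small that $C'\e_\delta^{\,n}\leq c_\kappa/2$ closes the argument with $\mu_\delta := c_\kappa/(2C_\delta'')$.

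The main obstacle I anticipate is that the ABP integrand requires an \emph{upper} bound on $\sH(\sqrt{\kappa}r^2|\D w|)+r^2\La w/n$, whereas contact geometry on its own supplies only a lower Hessian bound at contact points. The resolution, which is where uniform ellipticity genuinely enters, is to use $\cM^-(D^2 u)\leq f$ together with the Pucci bounds on $D^2 v_\delta$ (very negative on the outer piece via Lemma~\ref{lem-barrier}(iii), and only bounded above on the core via (iv)) to convert the lower Hessian bound into an upper bound on the positive part of the spectrum of $D^2 w$. The specific constant $(n+1)\Lambda$ in Lemma~\ref{lem-barrier}(iii) is exactly what is needed so that on the outer piece the resulting bound on the integrand involves only $r^2 f^+$, which is then absorbed by the $L^{n\eta}$-smallness hypothesis.
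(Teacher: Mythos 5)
Your proposal is correct and follows essentially the same route as the paper: normal map and area formula, cut-locus avoidance plus the lower Hessian bound at contact points from Lemma \ref{lem-contact-set-tech}, conversion of $r^2\La w$ into $\cM^-(r^2 D^2 w)$ via the lower Hessian bound and uniform ellipticity, the barrier's Pucci bounds (iii) and (iv) from Lemma \ref{lem-barrier}, and then H\"older with exponent $\eta$ and Bishop--Gromov. The only organizational difference is that you split the contact set into the core $B_{\delta r}(z_0)$ and the outer annulus before estimating the Jacobian, whereas the paper bounds $\Jac\phi$ pointwise by $\frac{\sS^n}{\lambda^n}\{r^2 f^+ + r^2\cM^+(D^2 v_\delta) + (n+1)\Lambda\sH\}^n$ and then observes that the barrier term is $\leq 0$ outside $B_{\delta r}(z_0)$ so it contributes only a constant times $\chi_{B_{\delta r}(z_0)}$; the two bookkeepings are equivalent.
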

  \begin{proof}
Let  $\Omega:= B_{R}(x_0),$  $E:=\overline B_{7r}(z_0),$ and
 $$w:= u+ v_\delta(\cdot; r;z_0).$$   
 For   $x\in \underline\cG_{r^{-2}}(w):=\underline\cG_{r^{-2}}\left(w;E;\Omega\right),$
there exists $y\in E$ such that  
\begin{equation*}
\inf_{ B_{R}(x_0)}\left(w+\frac{1}{2r^2}d_y^2\right)=w(x)+\frac{1}{2r^2}d_y^2(x).
\end{equation*}
According to Lemma \ref{lem-contact-set-tech},  
one can check  that  $x\not\in\Cut(z_0)\cup\Cut(y), $
\begin{equation*}
\begin{split}
&\D w(x)=-r^{-2}d(x,y)\D d_y(x),\quad D^2\left(w+\frac{1}{2r^2}d_y^2\right)(x)\geq0,\quad\mbox{and}\quad\\
&y=\exp_x r^{2}\D w(x)\not\in\Cut(x).
\end{split}
\end{equation*} 

 Now, consider the smooth function $\phi:B_{R}(x_0)\setminus\Cut(z_0)\to M$  defined by
 $$\phi(x):=\exp_xr^{2}\D w(x).$$ 
   We use   \eqref{eq-abp-cond-u} and the properties (i),(ii) of $v_\delta(\cdot;r;z_0)$ in Lemma \ref{lem-barrier} to deduce   that 
  $$B_r(z_0)\subset \phi\left(\underline\cG_{r^{-2}}(w)\cap B_{5r}(z_0)\right),$$
where we refer to  the proof of  \cite[Lemma 4.1]{Ca} for   details.
From  Remark \ref{rmk-c-convex}  and  Lemma \ref{lem-contact-set-tech}, we
  observe that  $\underline\cG_{r^{-2}}(w)$ is  measurable and 
 $$\underline\cG_{r^{-2}}(w)\subset
\left\{x\in B_{R}(x_0)\setminus \Cut(z_0) :   -C {\bf I}< r^2 D^2w(x)<C{\bf I},\,\,\,r^2|\D w(x)|< 2R_0  \right\}=:\Omega_0 $$
for some $C>0,$ where  $\phi$ is Lipschitz in a bounded, open set  $\Omega_0. $ Note that a closed set  $ \Cut(z_0)$  has measure zero. 
  Now we apply the area formula to obtain
 \begin{equation}\label{eq-area-jac}
\left|B_r(z_0)\right|\leq\left|\phi\left(\underline\cG_{r^{-2}}(w)\cap B_{5r}(z_0)\right)\right|\leq \int_{\underline\cG_{r^{-2}}(w)\cap B_{5r}(z_0)} \Jac\phi(x)\, d\vol(x).
\end{equation}
By making use of 
  Lemma \ref{lem-WZ-jac} and    Lemma \ref{lem-contact-set-tech}, we  have that for $x\in\underline\cG_{r^{-2}}(w),$
\begin{equation}\label{eq-jac-est}
\begin{split}
\Jac\phi(x)&\leq \sS^n\left(\sqrt{\kappa}r^{2}|\D w|\right)\left\{\sH\left(\sqrt{\kappa} r^2|\D w|\right)+\frac{r^2\La w}{n}\right\}^n\\
&\leq \sS^n\left(2\sqrt{\kappa}R_0\right)\left[\sH\left(2\sqrt{\kappa}R_0\right)+\frac{1}{\lambda }\left\{\cM^-\left(r^2D^2w\right)+{n\Lambda}\sH\left(2\sqrt{\kappa}R_0\right)\right\}\right]^n\\
&\leq \sS^n\left(2\sqrt{\kappa}R_0\right)\left\{ \frac{r^2 }{\lambda } \cM^-\left(D^2w\right)+(n+1)\frac{\Lambda}{\lambda}\sH\left(2\sqrt{\kappa}R_0\right)\right\}^n\\
&\leq \frac{\sS^n\left(2\sqrt{\kappa}R_0\right)}{\lambda^n}\left\{  r^2f^++r^2\cM^+\left(D^2v_\delta(\cdot; r;z_0)\right)+(n+1) {\Lambda}\sH\left(2\sqrt{\kappa}R_0\right)\right\}^n,
\end{split}
\end{equation}
where we recall that   $\sS(\tau)$ and  $\sH(\tau)$ are nondecreasing for $\tau\geq0.$   
  From Lemma \ref{lem-barrier}, we notice    that  
 $$r^2\cM^+\left(D^2v_\delta (\cdot; r;z_0)\right)+(n+1) {\Lambda}\sH\left(2\sqrt{\kappa}R_0\right) \leq \left\{ n\Lambda C_\delta + (n+1) {\Lambda}\sH\left(2\sqrt{\kappa}R_0\right)\right\}  \,\chi_{B_{\delta r}(z_0)} $$ 
 in $ B_{7r}(z_0)\setminus\Cut(z_0)$   for  $C_\delta>0$ as in Lemma \ref{lem-barrier}, where $\chi_{B_{\delta r}(z_0)}$ stands for the characteristic function. 
Combined  with  \eqref{eq-area-jac} and  \eqref{eq-jac-est}, this 
provides that 
 \begin{align*}
\left(\frac{\left|B_r(z_0)\right|}{\left|B_{7r}(z_0)\right|}\right)^{\frac{1}{n}} &\leq\left(\frac{1}{\left|B_{7r}(z_0)\right|} \int_{\underline\cG_{r^{-2}}(w)\cap B_{7r}(z_0)} \Jac\phi(x)\, d\vol(x)\right)^{\frac{1}{n}}\\
&\leq \tilde C \left( \frac{1}{|B_{7r}(z_0)|}\int_{\underline\cG_{r^{-2}}(w)\cap B_{7r}(z_0)} \left|r^2f^+\right|^{n\eta}\right)^{\frac{1}{n\eta}}
+\tilde C\frac{  \left| \underline\cG_{r^{-2}}(w)\cap B_{\delta r}(z_0)\right| ^\frac{1}{n\eta}}{{\left|B_{7r}(z_0)\right|^{\frac{1}{n\eta}} } },
\end{align*}
for a uniform constant $\tilde C>0$ depending only on $\delta,n,\lambda,\Lambda,$ and $\sqrt{\kappa}R_0.$ Using Bishop-Gromov's Theorem \ref{thm-BG},   we have that 
 \begin{align*}
\frac{\left| \underline\cG_{r^{-2}}(w)\cap B_{\delta r}(z_0) \right|^\frac{1}{n\eta}}{{\left|B_{7r}(z_0)\right|^{\frac{1}{n\eta}} } }
+\left(  \frac{1}{|B_{7r}(z_0)|}\int_{\underline\cG_{r^{-2}}(w)\cap B_{7r}(z_0)} \left|r^2f^+\right|^{n\eta}\right)^{\frac{1}{n\eta}}
&\geq \frac{1}{\tilde C} \left\{ \frac{1}{\cD}\left(\frac{1}{7}\right)^{\log_2\cD}\right\}^{\frac{1}{n}}=: 2\mu_\delta^\frac{1}{n\eta}
 \end{align*}
for {\color{black}$\cD:=2^{n}\cosh^{n-1}(2\sqrt\kappa R_0).$}  
 By selecting $\e_\delta:=\mu_\delta^{\frac{1}{n\eta}},$ we conclude that 
\begin{align*}
  \frac{\left| \underline\cG_{r^{-2}}\left(u+v_\delta(\cdot;r;z_0)\right)\cap B_{\delta r}(z_0) \right|}{\left|B_{7r}(z_0)\right| } \geq\mu_\delta. 
 \end{align*} 
 \end{proof}

\begin{cor}\label{cor-abp-type}
   Assume that  $\Sec\geq -\kappa$   for $\kappa\geq0.$ 
For  $z_0,x_0\in M$ and $0<r\leq R\leq R_0,$ assume that $  B_{7r}(z_0)\subset B_{R}(x_0).$   Let $u$ be   a smooth function  on $ \overline B_{R}(x_0)$ and let  
$$\tilde u:= u+ h\sum_{j=1}^{k}r_j^2v_\delta (\cdot;r_j;z_j) +h\sum_{j=1}^k\frac{1}{2} d_{y_j}^2,$$
for   $k\in\N,$ where   $h>0,$  $r_j>0,$   $ B_{7r_j}(z_j)\subset B_{R}(x_0),  y_j\in \overline B_{7r_{j}}(z_j)$ for   $j=1\cdots,k.$  
  Assume that 
\begin{equation*}
\tilde u\geq0\quad\mbox{on $B_{R}(x_0)\setminus B_{5r}(z_0),  \qquad \displaystyle\inf_{B_{2r}(z_0)}\tilde u\leq 1,$}
\end{equation*}
 and $\tilde u$ satisfies $\displaystyle \cM^-(D^2\tilde u)\leq \tilde f$ a.e. in $B_{7r}(z_0)\setminus\left(\bigcup_{j=1}^k\Cut(z_j)\cup\bigcup_{j=1}^k\Cut(y_j)\right)$ with    
  $$\left(\fint_{B_{7r}(z_0)} |r^2 \tilde f^+|^{n\eta}\right)^{\frac{1}{n{\color{black}\eta}}}\leq \e_\delta.$$
Then  we have 
$$\frac{\left|\underline\cG_{r^{-2}}\left(\tilde u+v_\delta(\cdot; r;z_0)\right)\cap B_{\delta r}(z_0)\right|}{ |B_{7r}(z_0)|} \geq \mu_\delta>0,$$
  where $\underline\cG_{r^{-2}}\left(\tilde u+v_\delta(\cdot; r;z_0)\right):=\underline\cG_{r^{-2}}\left(\tilde u+v_\delta(\cdot; r;z_0);\overline B_{7r}(z_0); B_{R}(x_0)\right)$ 
  and the uniform constants $\e_\delta$, $\mu_\delta >0$ are as in Lemma \ref{lem-abp-type}. 
\end{cor}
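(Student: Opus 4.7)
The plan is to mimic the proof of Lemma \ref{lem-abp-type} with $\tilde u$ in place of $u$, using Lemma \ref{lem-contact-set-tech} to bypass the cut-locus singularities that $\tilde u$ inherits from the extra barrier terms $r_j^2v_\delta(\cdot;r_j;z_j)$ and the squared-distance terms $\tfrac{1}{2}d_{y_j}^2$. Set
\[
w:=\tilde u+v_\delta(\cdot;r;z_0)=u+h\sum_{j=1}^{k}r_j^2v_\delta(\cdot;r_j;z_j)+h\sum_{j=1}^{k}\tfrac{1}{2}d_{y_j}^2+v_\delta(\cdot;r;z_0),
\]
which is the exact shape handled by Lemma \ref{lem-contact-set-tech}. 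The open set $\Omega_0:=B_R(x_0)\setminus\bigl(\bigcup_{j=0}^{k}\Cut(z_j)\cup\bigcup_{j=1}^{k}\Cut(y_j)\bigr)$ has full measure in $B_R(x_0)$, and $w$ is smooth on $\Omega_0$.

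First I would check that the normal map argument works. For any $x\in\underline\cG_{r^{-2}}(w):=\underline\cG_{r^{-2}}(w;\overline B_{7r}(z_0);B_R(x_0))$, Lemma \ref{lem-contact-set-tech} gives $x\in\Omega_0\setminus\Cut(y_0)$, smoothness of $w$ at $x$, the gradient bound $r^2|\D w(x)|=d_{y_0}(x)<2R$, and the two-sided Hessian control
\[
-\sH(2\sqrt\kappa R_0)\mathbf{I}\leq r^2D^2w(x)\leq r^2D^2u(x)+r^2hk\{C_\delta+\sH(2\sqrt\kappa R_0)\}\mathbf{I}+C_\delta\mathbf{I}.
\]
Define $\phi(x):=\exp_x r^2\D w(x)$ on $\Omega_0$; it is smooth there. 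Exactly as in the proof of Lemma \ref{lem-abp-type}, the sign conditions on $\tilde u$ and the barrier properties (i)--(ii) of $v_\delta(\cdot;r;z_0)$ force $B_r(z_0)\subset\phi(\underline\cG_{r^{-2}}(w)\cap B_{5r}(z_0))$, so the area formula applied on the bounded open piece of $\Omega_0$ where $\phi$ is Lipschitz yields
\[
|B_r(z_0)|\leq\int_{\underline\cG_{r^{-2}}(w)\cap B_{5r}(z_0)}\Jac\phi\,d\vol.
\]

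Next I would derive the Jacobian bound. The only real new point is how the hypothesis on $\tilde u$ enters. Since any contact point $x$ lies off every cut locus appearing in $\tilde u$, the assumption $\cM^-(D^2\tilde u)\leq\tilde f$ is valid at $x$, and consequently
\[
\cM^-(D^2w)(x)\leq\cM^-(D^2\tilde u)(x)+\cM^+(D^2v_\delta(\cdot;r;z_0))(x)\leq\tilde f(x)+\cM^+(D^2v_\delta(\cdot;r;z_0))(x).
\]
Plugging this into Lemma \ref{lem-WZ-jac} together with the lower Hessian bound from Lemma \ref{lem-contact-set-tech} reproduces the chain of inequalities in \eqref{eq-jac-est} with $f^+$ replaced by $\tilde f^+$. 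By property (iii) of $v_\delta(\cdot;r;z_0)$, the term $r^2\cM^+(D^2v_\delta(\cdot;r;z_0))+(n+1)\Lambda\sH(2\sqrt\kappa R_0)$ is nonpositive outside $B_{\delta r}(z_0)$ and is uniformly bounded on $B_{\delta r}(z_0)$ by $n\Lambda C_\delta+(n+1)\Lambda\sH(2\sqrt\kappa R_0)$.

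Finally I would close the argument with volume doubling. Splitting the $L^{n\eta}$ integral of $\Jac\phi$ into the contributions from $\tilde f^+$ and from $B_{\delta r}(z_0)$, and using Bishop--Gromov (Theorem \ref{thm-BG}) to compare $|B_r(z_0)|$ and $|B_{7r}(z_0)|$, I obtain exactly the inequality at the end of the proof of Lemma \ref{lem-abp-type}:
\[
\left(\frac{|B_r(z_0)|}{|B_{7r}(z_0)|}\right)^{1/n}\leq\tilde C\left(\fint_{B_{7r}(z_0)}|r^2\tilde f^+|^{n\eta}\right)^{1/(n\eta)}+\tilde C\left(\frac{|\underline\cG_{r^{-2}}(w)\cap B_{\delta r}(z_0)|}{|B_{7r}(z_0)|}\right)^{1/(n\eta)}.
\]
Choosing $\varepsilon_\delta$ so that the first term on the right is at most half of the uniform lower bound on the left yields the desired constant $\mu_\delta$. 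The main obstacle is conceptual rather than computational: confirming that the extra nonsmooth summands in $\tilde u$ do not spoil either the differential inequality or the area-formula step, and this is resolved entirely by Lemma \ref{lem-contact-set-tech}, which forces contact points off all the relevant cut loci.
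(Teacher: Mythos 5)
Your proof is correct and follows the paper's approach exactly: both set $w=\tilde u+v_\delta(\cdot;r;z_0)$, invoke Lemma \ref{lem-contact-set-tech} to force contact points off all relevant cut loci and obtain the smoothness, gradient, and Hessian control needed to make the normal map Lipschitz, and then rerun the ABP argument from Lemma \ref{lem-abp-type}. The paper's proof simply states that ``the remaining part of the proof is the same as the proof of Lemma \ref{lem-abp-type}''; you have filled in those steps explicitly (area formula, the inequality $\cM^-(D^2w)\leq\tilde f+\cM^+(D^2v_\delta)$, barrier property (iii), and the volume-doubling closing estimate), and all of them match the referenced computation.
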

\begin{proof} Let $\tilde w:=\tilde u+v_\delta(\cdot;r;z_0). $ 
From  Lemma \ref{lem-contact-set-tech},  we
  observe that  
   \begin{align*}
\underline\cG_{r^{-2}}\left(\tilde w\right):=\underline\cG_{r^{-2}}\left(\tilde w; \overline B_{7r}(z_0); B_R(x_0) \right)\subset\tilde\Omega_0,
\end{align*}
where $$\tilde\Omega_0:=
\left\{x\in B_{R}(x_0)\setminus \left(\bigcup_{j=0}^k\Cut(z_j)\cup\bigcup_{j=1}^k\Cut(y_j)\right) :   -C {\bf I}< r^2D^2\tilde w(x)<C{\bf I},\,\,\,r^2|\D \tilde w(x)|< 2R_0  \right\}
$$ for some $C>0. $ We note that    $\underline\cG_{r^{-2}}\left(\tilde w \right)$ is  measurable according to  Remark \ref{rmk-c-convex} 
  and  that a closed set  $ \bigcup_{j=0}^{k}\Cut(z_j)\cup  \bigcup_{j=1}^{k}\Cut(y_j)$  has measure zero. 
Thus $\tilde w $ is smooth in  $\tilde\Omega_0$ and the function 
$\tilde \phi:\tilde\Omega_0\to M$ defined as 
$$\tilde\phi(x):=\exp_xr^2\D \tilde w(x),$$ 
is Lipschitz continuous in $\tilde \Omega_0.$   
  The remaining part of  the proof is the same as the proof of Lemma \ref{lem-abp-type}.
\end{proof}

 \subsection{Calder\'on-Zygmund Technique } 
We quote this subsection from \cite[Section 6]{Ca}    to introduce  the Calder\'on-Zygmund techinque which is one of main tools for the proof of  uniform $L^p$-estimates of the Hessian of solutions to uniformly  elliptic equations.    
 We first  present  the Christ decomposition     \cite{Ch},  which generalizes the  Euclidean dyadic decomposition for 
  so-called ``spaces of homogeneous type" (see Theorem \ref{thm-christ}). 
  In harmonic analysis, a metric measure space  $\cX=\left(\cX,d,\nu\right)$ is called  a space of homogeneous type when  a nonnegative Borel measure $\nu$ satisfies  the doubling property with a  doubling constant $\cD>0$: 
\begin{equation*}
\nu(B_{2r}(x))\leq  \cD\, \nu(B_r(x)) <+\infty\quad\forall x\in \cX,\,\, r>0. 
\end{equation*}
 A Riemannian manifold with nonnegative Ricci curvature has the volume doubling property with the  doubling constant  $\cD=2^n$.  When a Riemannian manifold $M$ has  a negative lower bound of the Ricci curvature,  the Riemannian measure of  $M$ has a locally uniform doubling  property; see  Bishop-Gromov's  Theorem \ref{thm-BG}.  As a matter of fact, one can see  that  the following Christ decomposition is   valid for the metric measure space equipped with   a local doubling measure. 

 \begin{thm}[Christ]\label{thm-christ} Assume that 
the Ricci curvature of  $M$ is bounded  from below. 
There exist a countable collection $\left\{Q^{k,\alpha}\subset M : k\in\Z,\alpha\in I_k\right\}$ of open subsets of $M$ and positive uniform  constants $\delta_0\in(0,1)$, $c_1$ and $c_2$ (with $2c_1\leq c_2$ ) such that 
\begin{enumerate}[(i)]
\item $\left| M \backslash \bigcup_{\alpha\in I_k}Q^{k,\alpha}\right|=0$ for $k\in\Z$,
\item if $l\leq k$, $\alpha\in I_k$, and $\beta\in I_l$, then either $Q^{k,\alpha}\subset Q^{l,\beta}$ or $Q^{k,\alpha}\cap Q^{l,\beta}=\emptyset$,
\item for any $\al\in I_k$ and any $l<k$, there is a unique $\beta\in I_l$ such that $Q^{k,\alpha}\subset Q^{l,\beta}$,
\item $\diam(Q^{k,\alpha})\leq c_2\delta^k_0 $,
\item any $Q^{k,\alpha}$ contains some ball $B_{c_1\delta^k_0}(z^{k,\alpha})$.
\end{enumerate}
\end{thm}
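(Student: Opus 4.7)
The plan is to adapt Christ's construction from \cite{Ch} for spaces of homogeneous type, exploiting the fact that only a local doubling property is needed; this holds on $M$ by Bishop-Gromov's Theorem~\ref{thm-BG}, which yields a uniform doubling constant $\cD$ on balls of radius at most $R_0$. First I would fix $\delta_0\in(0,1)$ small enough (depending only on $\cD$). At each scale $k\in\Z$, a standard Zorn/greedy argument produces a maximal $\delta_0^k$-separated set of centers $\{z^{k,\alpha}\}_{\alpha\in I_k}\subset M$: any two distinct centers satisfy $d(z^{k,\alpha},z^{k,\beta})\geq \delta_0^k$, while every point of $M$ lies within distance $\delta_0^k$ of some $z^{k,\alpha}$. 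Separability of $M$ guarantees $I_k$ is countable.

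Next I would build a parent tree: for each $\alpha\in I_{k+1}$, by maximality of the $k$-th net there is some $\beta\in I_k$ with $d(z^{k+1,\alpha},z^{k,\beta})<\delta_0^k$, and I fix one such choice as the parent. Working from a fine reference scale upward, every point of $M$ is first assigned to a nearest center, and this fine cell is then agglomerated along the parent tree so that $Q^{k,\alpha}$ is the union of all fine-scale cells whose chain of ancestors passes through $z^{k,\alpha}$. Properties (ii) and (iii) hold by construction. Property (v) follows because $B_{c_1\delta_0^k}(z^{k,\alpha})$ is closer to $z^{k,\alpha}$ than to any competing center at finer scales, provided $c_1>0$ is chosen small relative to $\sum_{j\geq 1}\delta_0^j$. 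Property (iv) follows from the triangle inequality: a point in $Q^{k,\alpha}$ reaches $z^{k,\alpha}$ through a chain of centers at scales $\geq k$, producing a geometric sum bounded by $c_2\delta_0^k$ with $c_2\leq 2/(1-\delta_0)$. With $\delta_0$ sufficiently small, the constraint $2c_1\leq c_2$ is easily met.

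The main obstacle is property (i), namely that the ``skeleton'' $M\setminus\bigcup_\alpha Q^{k,\alpha}$ has Riemannian measure zero at every scale. This is the technical heart of Christ's argument. The key ingredient is a single-scale annular decay estimate of the form
$$\left|\{x\in Q^{k,\alpha}:d(x,\partial Q^{k,\alpha})\leq \eta\delta_0^k\}\right|\leq C\eta^{c}|Q^{k,\alpha}|$$
for some exponent $c>0$ depending only on $\cD$; here the local doubling property is used to control the measure of a thin tubular neighborhood of the union of spheres equidistant from competing centers. Iterating this bound across generations and summing geometrically in $\eta$, with $\delta_0$ chosen small enough in terms of $\cD$ so the series converges strictly below $1$, forces the boundary set to have measure zero. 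Since Bishop-Gromov supplies the required local doubling constant uniformly on $M$, the argument of \cite{Ch} transfers verbatim, completing the proof.
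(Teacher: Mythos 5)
The paper does not give a proof of this theorem: it states the result and attributes it to Christ \cite{Ch} (and to \cite[Section 6]{Ca} for the Riemannian setting), with the one-line observation that the argument requires only a \emph{local} doubling property, which Bishop--Gromov supplies once $\Ric$ is bounded below. Your proposal correctly isolates exactly this observation as the reason the construction transfers, and your sketch of Christ's argument (maximal $\delta_0^k$-nets, parent tree, nested cubes via agglomeration, small-boundary estimate for property (i)) is the right framework; so in spirit you match the paper's intent.

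One technical caveat worth flagging: your phrase ``working from a fine reference scale upward, every point of $M$ is first assigned to a nearest center, and this fine cell is then agglomerated'' does not literally reproduce Christ's construction. In Christ, there is no fixed reference scale; the nets are built for all $k\in\Z$ at once, the parent relation $(l,\beta)\leq(k,\alpha)$ for $l\geq k$ is fixed recursively, and each cube $Q^{k,\alpha}$ is defined as an interior of the closure of the set of centers $z^{l,\beta}$ with $(l,\beta)\leq(k,\alpha)$ — not as an agglomeration of Voronoi cells from below. A Voronoi-from-below approach would not automatically produce the clean nesting (ii)--(iii) across all scales, and the construction must make sense for $k$ ranging over all of $\Z$ (both coarser and finer), which a ``fine reference scale'' cannot accommodate. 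Also, for property (i), your invocation of the annular decay (Christ's small-boundary estimate) is indeed what is used, and it is correct that only local doubling enters there; but note that this estimate is stated in Christ as a separate, stronger property, and the reduction of (i) to it (taking $\eta\to 0$ in the tube estimate) deserves to be spelled out. None of this changes the conclusion that the argument of \cite{Ch} carries over verbatim under local doubling, which is all the paper asserts.
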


The open set $Q^{k,\alpha}$ in Theorem \ref{thm-christ}  is called  a dyadic cube of generation $k$ on $M$. 
 The property (iii) asserts that     for any $\alpha\in I_k$,  there is  a unique $\beta\in I_{k-1}$ such that 
$Q^{k,\alpha}\subset Q^{k-1, \beta}$. We call $Q^{k-1,\beta}$ the predecessor of $Q^{k,\alpha}$ which is denoted by $\widetilde{Q^{k,\alpha}}$ for simplicity.

 For the rest of the paper, we fix  some small numbers;
\begin{equation}\label{eq-choice-delta}
\delta:=\frac{2c_1}{c_2}\delta_0\in(0,\delta_0),\quad\mbox{and}\quad\delta_1:= \frac{\delta_0(1-\delta_0)}{2}\in\left(0,\frac{\delta_0}{2}\right),
\end{equation}
where $\delta_0\in(0,1), c_1$ and $c_2$  are the constants  in Theorem \ref{thm-christ}.
For a given $R>0$,  we define $k_R\in\N$ to satisfy
\begin{equation*}
c_2\delta_0^{k_R-1}<R\leq c_2\delta_0^{k_R-2}.
\end{equation*}
The number $k_R$ means that a dyadic cube of generation $k_R$ is comparable to a ball of radius $R$.  
 The following technical lemma  is quoted from    \cite[Lemma 6.5]{Ca}, which deals with  the relation between dyadic cubes and  comparable balls. 
\begin{lemma}\label{lem-decomp-cube-ball}
 Let $x_0\in M$ and $R>0$. 
  \begin{enumerate}[(i)]
 \item  If $Q$ is a dyadic cube of generation $k$ such that
 $$k\geq k_R\quad\mbox{and}\quad Q\subset B_{2R}(x_0), $$
 then there exist $\overline z\in Q$ and $\overline r_k\in(0,R)$ such that
 \begin{equation}\label{eq-decomp-m-1}
 B_{\delta \overline r_k}(\overline z)\subset Q\subset\widetilde Q\subset \overline B_{2\overline  r_k}(\overline z)\subset B_{7\overline r_k}(\overline z)\subset B_{7R}(x_0) 
 \end{equation}
 and 
 \begin{equation}\label{eq-decomp-m-2}
 B_{5R}(x_0)\subset B_{7R}(\overline z).
 \end{equation} 
In fact, for $k\geq k_R,$  the   radius $\overline r_k$ is defined  by 
$$\overline r_k:=\frac{c_1}{\delta}\delta_0^k=\frac{1}{2}c_2\delta_0^{k-1}.$$ 
 \item If $Q$ is a dyadic cube of generation $k_R$ and $d(x_0, Q)\leq \delta_1R$,  then $Q\subset B_{2R}(x_0) $ and hence \eqref{eq-decomp-m-1} and \eqref{eq-decomp-m-2} hold for some $\overline z\in Q$ and $\overline r_k=\overline r_{k_R}=\frac{1}{2}c_2\delta_0^{k_R-1}\in \left[\frac{\delta_0 R}{2},\frac{R}{2}\right)$. Moreover, 
 $$B_{\delta_1R}(x_0)\subset B_{2\overline r_{k_R}}(\overline z). $$
 \item There exists at least one dyadic cube $Q$ of generation $k_R$ such that $d(x_0, Q)\leq\delta_1R$.  \end{enumerate}
\end{lemma}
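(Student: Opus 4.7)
The plan is to deduce all three parts directly from Christ's decomposition (Theorem~\ref{thm-christ}), using only properties (i), (iv), (v) together with the defining identities for $\delta$, $\delta_1$, and $k_R$. No new geometric input is required; the argument is careful bookkeeping of constants.

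For part (i), I would take $\overline z := z^{k,\alpha}$ from property (v), so $B_{c_1\delta_0^k}(\overline z) \subset Q$. Substituting $\delta = \frac{2c_1}{c_2}\delta_0$ into the stated formula gives $\overline r_k = \frac{c_2}{2}\delta_0^{k-1}$ and $\delta\,\overline r_k = c_1\delta_0^k$, whence $B_{\delta\overline r_k}(\overline z) \subset Q$. The inclusion $\widetilde Q \subset \overline B_{2\overline r_k}(\overline z)$ follows by applying property (iv) to the predecessor $\widetilde Q = Q^{k-1,\beta}$ (which has generation $k-1$, so diameter at most $c_2\delta_0^{k-1} = 2\overline r_k$) combined with $\overline z \in Q \subset \widetilde Q$. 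For the two remaining inclusions I only need the crude bound $d(\overline z, x_0) < 2R$ (since $\overline z \in Q \subset B_{2R}(x_0)$) and $\overline r_k < R/2$, both consequences of $k \geq k_R$ and the defining inequality $c_2\delta_0^{k_R-1} < R$; then the triangle inequality closes each in one line.

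For part (ii), the generation-$k_R$ cube $Q$ satisfies $\diam Q \leq c_2\delta_0^{k_R} < \delta_0 R$, so $Q \subset B_{(\delta_1+\delta_0)R}(x_0)$. Since $\delta_1 = \frac{\delta_0(1-\delta_0)}{2} < \delta_0/2$, we have $\delta_1+\delta_0 < \tfrac{3}{2}\delta_0 < 2$, giving $Q \subset B_{2R}(x_0)$ so part (i) applies. The refined lower bound $\overline r_{k_R} \geq \delta_0 R/2$ comes from the other half of the defining inequality, $R \leq c_2\delta_0^{k_R-2}$, rearranged as $c_2\delta_0^{k_R-1} \geq \delta_0 R$. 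Finally, $B_{\delta_1 R}(x_0) \subset B_{2\overline r_{k_R}}(\overline z)$ reduces, via the triangle inequality and $d(x_0,\overline z) \leq d(x_0,Q) + \diam Q \leq \delta_1 R + c_2\delta_0^{k_R}$, to checking $2\delta_1 R + c_2\delta_0^{k_R} \leq c_2\delta_0^{k_R-1}$, i.e.\ $2\delta_1 R \leq c_2\delta_0^{k_R-1}(1-\delta_0)$, which is precisely what the choice of $\delta_1$ encodes together with $c_2\delta_0^{k_R-1} \geq \delta_0 R$.

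For part (iii), property (i) of Theorem~\ref{thm-christ} says $\bigcup_\alpha Q^{k_R,\alpha}$ has full measure in $M$, so its intersection with the positive-measure ball $B_{\delta_1 R}(x_0)$ is nonempty; picking any point $y$ in this intersection and letting $Q$ be the unique generation-$k_R$ cube containing $y$ yields $d(x_0,Q) \leq d(x_0,y) < \delta_1 R$. The only mild obstacle I foresee is tracking strict versus non-strict inequalities, ensuring $\overline r_k \in (0,R)$ in (i) and $\overline r_{k_R} \in [\delta_0 R/2, R/2)$ in (ii), so that subsequent applications (where hypotheses like $B_{7r}(z_0) \subset B_R(x_0)$ are invoked) remain valid. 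Otherwise the lemma is a direct unpacking of Christ's theorem against the definitions in~\eqref{eq-choice-delta}.
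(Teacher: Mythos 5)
Your proof is correct, and it is essentially the same bookkeeping argument that Cabr\'e gives for \cite[Lemma~6.5]{Ca}; the paper itself does not reproduce the proof but merely cites that source, so you have in effect supplied the omitted argument. The constant-tracking all checks out: $\delta\,\overline r_k = c_1\delta_0^k$ and $2\overline r_k = c_2\delta_0^{k-1}$ match properties (iv)--(v) of Theorem~\ref{thm-christ}; the crude bounds $d(\overline z,x_0)<2R$, $\overline r_k<R/2$ give $7\overline r_k+2R<7R$ and $5R+2R=7R$ for the last inclusion in \eqref{eq-decomp-m-1} and for \eqref{eq-decomp-m-2}; the definition of $k_R$ pins $\overline r_{k_R}\in[\delta_0R/2,R/2)$; and the identity $\delta_0R(1-\delta_0)=2\delta_1R$ together with $c_2\delta_0^{k_R-1}\ge\delta_0 R$ closes the final inclusion in (ii). Part (iii) via the full-measure property of the generation-$k_R$ cubes is exactly right.
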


 Using  the Calder\'on-Zygmund technique, Lemma \ref{lem-CZ} follows from Theorem \ref{thm-christ};  the proof  can be found in   \cite[Lemma 6.3]{Ca}.  
\begin{lemma}\label{lem-CZ}
Let $Q_1\subset M$ be a dyadic cube, $A\subset B\subset Q_1$ be measurable sets, and $\sigma\in(0,1)$ satisfying  
\begin{enumerate}[(i)]
\item $|A|\leq \sigma|Q_1|$ and
\item if $Q\subset Q_1$ is a dyadic cube satisfying $|A\cap Q|>\sigma |Q|,$ then $\widetilde Q\subset B.$
\end{enumerate}
Then we have  $|A|\leq \sigma |B|.$
\end{lemma}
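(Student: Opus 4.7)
The plan is to perform a Calder\'on--Zygmund stopping-time decomposition of $A$ inside $Q_1$ at level $\sigma$, using the Christ dyadic structure from Theorem~\ref{thm-christ}. Since $|A \cap Q_1| = |A| \leq \sigma |Q_1|$ by (i), $Q_1$ itself is not selected; I then descend inductively in the dyadic tree, subdividing any cube $Q$ with $|A \cap Q| \leq \sigma |Q|$ into its children and ``selecting'' (and terminating the recursion on) any child $Q'$ for which $|A \cap Q'| > \sigma |Q'|$. This produces a countable family $\{Q_j\}$ of pairwise disjoint dyadic subcubes of $Q_1$ satisfying $|A \cap Q_j| > \sigma |Q_j|$, whose predecessor $\widetilde{Q_j}$ (which by construction was examined and \emph{not} selected) satisfies $|A \cap \widetilde{Q_j}| \leq \sigma |\widetilde{Q_j}|$.

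Next I would verify the covering $A \subset \bigcup_j Q_j$ up to a null set. Any $x \in Q_1 \setminus \bigcup_j Q_j$ lies, for every sufficiently large generation $k$, in a dyadic cube $Q^{k,\alpha(x)} \subset Q_1$ with ratio $|A \cap Q^{k,\alpha(x)}|/|Q^{k,\alpha(x)}| \leq \sigma$. Because these cubes have diameter at most $c_2 \delta_0^k \to 0$ (Theorem~\ref{thm-christ}(iv)) while containing inscribed balls of comparable measure (Theorem~\ref{thm-christ}(v)) in a doubling space (Bishop--Gromov, Theorem~\ref{thm-BG}), the Lebesgue differentiation theorem gives $\chi_A(x) = \lim_k |A \cap Q^{k,\alpha(x)}|/|Q^{k,\alpha(x)}| \leq \sigma < 1$ for a.e.\ such $x$, so almost every $x \in A$ must lie in some $Q_j$. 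In particular $|A| = \sum_j |A \cap Q_j|$.

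By hypothesis (ii), $\widetilde{Q_j} \subset B$ for every $j$. To avoid the overcounting that arises from the predecessors $\{\widetilde{Q_j}\}$ being allowed to nest, I would pass to the family $\{R_k\}$ of dyadic maximal elements of $\{\widetilde{Q_j}\}$. By Theorem~\ref{thm-christ}(ii), any two distinct dyadic cubes are nested or disjoint, so the $R_k$ are pairwise disjoint; every $\widetilde{Q_j}$ is contained in some $R_k$; and each $R_k$ equals some $\widetilde{Q_{j_k}}$, so the stopping-time bound yields $|A \cap R_k| \leq \sigma |R_k|$. Putting everything together,
\[
|A| = \sum_j |A \cap Q_j| \;\leq\; \Big|A \cap \bigcup_j \widetilde{Q_j}\Big| \;=\; \sum_k |A \cap R_k| \;\leq\; \sigma \sum_k |R_k| \;=\; \sigma\, \Big|\bigcup_k R_k\Big| \;\leq\; \sigma |B|.
\]

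The main technical subtlety I expect to have to pin down is the a.e.\ covering $A \subset \bigcup_j Q_j$. In Euclidean space this is the classical Lebesgue differentiation theorem along dyadic cubes; on a Riemannian manifold with sectional (hence Ricci) curvature bounded below, the locally uniform doubling property from Bishop--Gromov together with property (v) of the Christ cubes makes the family regular enough for the standard proof of the differentiation theorem to transfer verbatim, after which the stopping-time mechanics above are essentially formal.
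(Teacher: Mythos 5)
Your proof is correct, and it is essentially the standard Calder\'on--Zygmund stopping-time decomposition over the Christ dyadic cubes, which is exactly the argument the paper defers to in \cite[Lemma 6.3]{Ca}. The stopping rule, the a.e.\ covering of $A$ via Lebesgue differentiation along the shrinking Christ cubes (valid here thanks to the locally uniform doubling from Bishop--Gromov and properties (iv)--(v) of Theorem~\ref{thm-christ}), and the passage to maximal predecessors to avoid overcounting all match the intended proof.
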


\subsection{Proof of $W^{2,\varepsilon}$-estimate}

This subsection is devoted to  the proof of a (locally) uniform  $W^{2,\ve}$-estimate for    uniformly elliptic  operators.   
Instead of the contact sets in Definition \ref{def-contact-set}, we introduce  the special  contact set  so as  to proceed with the ABP method   
using Lemma \ref{lem-abp-type} 
in Proposition \ref{prop-decay-est-supersol}  with the help of   the Calder\'on-Zygmund technique.  
For the special contact set, we  make use of     sums of the barrier functions and the squared distance functions  as global  test       functions on a Riemannian manifold
since the  scale invariant barrier functions   are well-understood in Lemma \ref{lem-barrier}. With the choice of $\delta\in(0,1)$ in \eqref{eq-choice-delta},  the barrier function $v_\delta(\cdot;r;z)$ in Lemma \ref{lem-barrier}   will be denoted by $v(\cdot;r;z)$ 
below and hereafter.

\begin{definition}[Special contact set]\label{def-special-contact -set}
{\rm 
Let $\Omega$ be a bounded, open set in $M.$ 
  For $k\in \N,$
let  $\cQ_{k}\left(\Omega\right)$  be the  set  of global  test functions on $\Omega$ 
defined as   $$\cQ_{k}\left(\Omega\right):=\left\{ \sum_{j=1}^{k}r_j^2v(\cdot;r_j;z_j) +\sum_{j=1}^k\frac{1}{2} d_{y_j}^2 : \, r_j>0,\,\mbox{$B_{7r_j}(z_j)\subset \Omega, \, y_j\in \overline B_{7r_{j}}(z_j)\,\,\forall j=1,\cdots,k$   }\right\}.$$
  For  $u\in C(\Omega),$
and  $k\in \N,$   define the  {\it special  contact set } associated with $u$  of degree $k$ over $\Omega$
by  
\begin{align*}
&{\color{black}\underline\sG^{k}\left(u; \Omega\right)}:=\left\{x\in \Omega: 
\inf_{\Omega} \left( u+q\right)= u(x)+q(x)\,\,\,\mbox{for some  $q\in \cQ_{l}\left(\Omega\right)$ with $1\leq l\leq k.$
} \right\}. 
\end{align*}
   
We also define 
$$
 \sG^{k}\left(u;  \Omega\right):=\underline\sG^{k}\left(u; \Omega\right)\cap \underline\sG^{k}\left(-u; \Omega\right).
$$
 }
 \end{definition}


Now,  we obtain the following  power decay  estimate of the measure of the special contact set with respect to the degree, using   Lemma \ref{lem-abp-type}. 

\begin{prop}\label{prop-decay-est-supersol}
Assume that  $\Sec\geq -\kappa$   for $\kappa\geq0,$ and let    $x_0\in M$ and $0<7R\leq R_0.$ 
  Let $u$ be a smooth function in $\overline B_{7R}(x_0)$ such that  $$\|u\|_{L^{\infty}\left(B_{7R}(x_0)\right)}\leq1/2,$$ and     $u\in\overline \cS\left(\lambda,\Lambda, f\right)$ 
 on $B_{7R}(x_0)$ with   
$$\left(\fint_{B_{7R}(x_0)} |R^2 f^+|^{n\eta}\right)^{\frac{1}{n\eta}}\leq \frac{\e}{2};\quad\ \eta:=1+\log_2\cosh (4\sqrt\kappa R_0).$$ 
Let $Q_1$ be a dyadic cube of generation $k_R$ such that $d(x_0,Q_1)\leq \delta_1 R,$ and let $\overline r_{k_R}\in \left[\frac{\delta_0 R}{2},\frac{R}{2}\right)$ be the radius in Lemma \ref{lem-decomp-cube-ball} (ii).      
Then we have 
\begin{equation}\label{eq-decay-est}
\frac{\left|Q_1\setminus \underline\sG^{K^{(i-1)}}\left(\overline r_{k_R}^2u; B_{7R}(x_0)\right) \right|}{|Q_1|}\leq C\left(1-\frac{\mu}{2}\right)^i\quad\forall i=1,2,\cdots,
\end{equation}  
where     the uniform constants $K\in \N$ and $C\geq1$ depend only on $n,\lambda,\Lambda,$ and $\sqrt{\kappa}R_0,$ and 
 the   constants  $\e:=\e_\delta, \mu:=\mu_\delta$  are as in Lemma \ref{lem-abp-type} with \eqref{eq-choice-delta}. 
\end{prop}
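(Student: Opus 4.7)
I will prove \eqref{eq-decay-est} by induction on $i$, combining the ABP-type measure estimate of Corollary \ref{cor-abp-type} applied at successively finer scales in the Christ dyadic decomposition (Theorem \ref{thm-christ}) with the Calderón--Zygmund covering Lemma \ref{lem-CZ}. The geometric growth $K^{i-1}$ of the allowed degree in $\underline\sG^{K^{i-1}}$ is tuned so that at each inductive step one can add precisely one new pair of test functions from $\cQ_1$ to the global tangent while staying inside the prescribed class $\cQ_{K^{i}}$.

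\textbf{Base case.} For $i=1$, use Lemma \ref{lem-decomp-cube-ball}(ii) to pick $\overline z \in Q_1$ with $B_{\delta \overline r_{k_R}}(\overline z) \subset Q_1 \subset B_{2\overline r_{k_R}}(\overline z)$ and $B_{5R}(x_0) \subset B_{7R}(\overline z)$. Set $v := \overline r_{k_R}^{2} u$. The normalization $\|u\|_{L^\infty(B_{7R}(x_0))} \leq 1/2$ makes $v$ fit \eqref{eq-abp-cond-u} modulo a uniformly bounded additive constant, and Lemma \ref{lem-weight-int-ellip} transfers the $L^{n\eta}$-smallness of $f$ from $B_{7R}(x_0)$ to $B_{7\overline r_{k_R}}(\overline z)$. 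Lemma \ref{lem-abp-type} then produces a subset of $B_{\delta \overline r_{k_R}}(\overline z) \subset Q_1$ of measure $\geq \mu\,|B_{7\overline r_{k_R}}(\overline z)|$ lying in $\underline\sG^{1}(v; B_{7R}(x_0))$. Bishop--Gromov (Theorem \ref{thm-BG}) gives $|B_{7\overline r_{k_R}}(\overline z)| \gtrsim |Q_1|$, yielding \eqref{eq-decay-est} at $i = 1$ after absorbing constants into $C$.

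\textbf{Inductive step.} Assume \eqref{eq-decay-est} at level $i$, and set $A := Q_1 \setminus \underline\sG^{K^{i}}(v)$ and $B := Q_1 \setminus \underline\sG^{K^{i-1}}(v)$. Since $\underline\sG^{k}$ is monotone in $k$, $A \subset B$, and $|B| \leq C(1 - \mu/2)^{i}|Q_1|$ by hypothesis. I will invoke Lemma \ref{lem-CZ} on $Q_1$ with $\sigma := 1 - \mu/2$ to conclude $|A| \leq \sigma|B| \leq C(1-\mu/2)^{i+1}|Q_1|$. Condition (i), $|A| \leq \sigma|Q_1|$, is automatic for $i$ sufficiently large (small indices absorbed into $C$). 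The crux is condition (ii), whose contrapositive is: \emph{if $Q \subset Q_1$ is a dyadic cube of generation $k > k_R$ with $\widetilde Q \cap \underline\sG^{K^{i-1}}(v) \neq \emptyset$, then $|\underline\sG^{K^{i}}(v) \cap Q| \geq (\mu/2)|Q|$.} To prove this, fix $x_\star \in \widetilde Q \cap \underline\sG^{K^{i-1}}(v)$ with witness $q_\star \in \cQ_{l}$, $l \leq K^{i-1}$, and let $\overline z_Q \in Q$, $\overline r_k$ be as in Lemma \ref{lem-decomp-cube-ball}(i). Define $w := v + q_\star - \inf_{B_{7R}(x_0)}(v + q_\star)$, which is nonnegative on $B_{7R}(x_0)$ and vanishes at $x_\star \in \widetilde Q \subset B_{2\overline r_k}(\overline z_Q)$, so $w$ fits the normalization of Corollary \ref{cor-abp-type} modulo a uniformly bounded rescaling. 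Away from the (measure-zero) union of cut loci of the finitely many base points of $q_\star$, Lemmas \ref{lem-barrier}(iv), \ref{lem-hess-dist-sqrd} and Remark \ref{rmk-barrier} give a uniform bound on $\cM^{+}(D^2 q_\star)$, whence $\cM^{-}(D^2 w) \leq f^{+} + C_{*}/\overline r_{k_R}^{2}$ for some uniform $C_{*}$. The rescaled $L^{n\eta}$-norm of this effective source stays below the ABP threshold $\varepsilon$ at scale $\overline r_k$ thanks to Lemma \ref{lem-weight-int-ellip} and a sufficiently small initial choice of $\varepsilon$. Corollary \ref{cor-abp-type} applied to $w$ on $B_{7\overline r_k}(\overline z_Q)$ then produces a subset of $B_{\delta \overline r_k}(\overline z_Q) \subset Q$ of measure $\geq \mu\,|B_{7\overline r_k}(\overline z_Q)| \gtrsim \mu\,|Q|$ on which $w + v_\delta(\cdot;\overline r_k;\overline z_Q)$ admits a global tangent squared distance. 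Unwinding this, $v$ itself inherits the global tangent $q_\star + \overline r_k^{2} v_\delta(\cdot;\overline r_k;\overline z_Q) + \tfrac{1}{2}d_{y_0}^{2} \in \cQ_{l+1}$; since $l + 1 \leq K^{i-1} + 1 \leq K^{i}$ whenever $K \geq 2$, these points lie in $\underline\sG^{K^{i}}(v)$, verifying condition (ii).

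\textbf{Main obstacle.} The delicate technical point is keeping $\cM^{+}(D^2 q_\star)$ controlled uniformly in $i$ at the sub-cube scale $\overline r_k$, even though $q_\star$ may be a sum of up to $K^{i-1}$ pairs. The scale-invariant structure of $\cQ_{k}$, whereby each pair $r_j^{2}v(\cdot;r_j;z_j) + \tfrac{1}{2}d_{y_j}^{2}$ carries an intrinsic Hessian bound independent of its own scale $r_j$, combined with the dyadic separation of scales in the Christ tree, limits the contribution of terms introduced at scales much smaller than $\overline r_k$ and makes the effective source contribute to the rescaled $L^{n\eta}$-norm only up to a uniform constant. The geometric growth $K^{i-1}$ of the degree is precisely the budget required to absorb the one new pair produced by Corollary \ref{cor-abp-type} at each inductive step while remaining compatible with the Calderón--Zygmund bookkeeping on the dyadic tree.
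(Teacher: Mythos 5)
Your base case and overall architecture (CZ covering lemma on the Christ dyadic tree, applying the ABP estimate at the sub-cube scale, accounting degree growth by $K$) match the paper's proof. However, there are two linked gaps in the inductive step that your argument does not close.

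First, your choice of $B$ is too small. You set $B := Q_1 \setminus \underline\sG^{K^{i-1}}(v)$, but the paper takes
$B := \bigl(Q_1 \setminus \underline\sG^{K^{i-1}}(\overline r_{k_R}^2u)\bigr)\,\cup\,\bigl\{z \in Q_1 : m_{B_{7R}(x_0)}(|R^2 f^+|^{n\eta})(z) > K^{(i-1)n\eta}\bigr\}$,
i.e.\ it appends the superlevel set of the Hardy--Littlewood maximal function of $|R^2 f^+|^{n\eta}$. This extra set is not decorative: when you apply the ABP estimate on a small ball $B_{7\overline r_k}(z_0)$ with $\overline r_k \ll R$, you need an upper bound on $\fint_{B_{7\overline r_k}(z_0)} |R^2 f^+|^{n\eta}$, and this is \emph{not} what Lemma \ref{lem-weight-int-ellip} provides. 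That lemma controls the \emph{rescaled} average $\fint_{B_{7\overline r_k}} |\overline r_k^2 f^+|^{n\eta}$ in terms of $\fint_{B_{7R}} |R^2 f^+|^{n\eta}$; the unrescaled fine-scale average can be arbitrarily large when $f$ concentrates. The only way to control it pointwise over good cubes is to know the maximal function is small at some point $\tilde z \in \widetilde Q$, which is exactly what $\widetilde Q \not\subset B$ gives in the paper. Without the maximal-function component of $B$, your claim that "the rescaled $L^{n\eta}$-norm of the effective source stays below the ABP threshold" cannot be substantiated, and the "sufficiently small initial choice of $\varepsilon$" does not rescue it, because the obstruction grows with the depth of the cube, not with $\varepsilon$. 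The price for adding this set to $B$ is that the clean recursion $|A| \leq \sigma |B|$ no longer directly yields $(1-\mu/2)^{i+1}$; one must run the three-term recursion $\alpha_i \leq (1-\mu)(\alpha_{i-1} + \beta_{i-1})$ and then bound $\beta_j$ by the weak type $(1,1)$ estimate (Lemma \ref{lem-max-ft}), which is the content of step (iii) of the paper's proof and is missing from yours.

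Second, your normalization of the test function is not the right one. You write $w := v + q_\star - \inf(v + q_\star)$ and say it fits the ABP normalization "modulo a uniformly bounded rescaling." But the necessary rescaling factor is $\frac{1}{\overline r_k^2 K_0 K^{i-1}}$, which decays geometrically in $i$ and is certainly not uniformly bounded below. This factor is precisely what tames both sources of blow-up: the Hessian of $q_\star$, which involves up to $K^{i-1}$ pairs each contributing a uniform amount so that $r^2\cM^+(D^2 q_\star) \lesssim K^{i-1}$, and the fine-scale $L^{n\eta}$-average of $R^2 f^+$, which is $\leq K^{i-1}$ on the maximal-function good set. Dividing by $K_0 K^{i-1}$ collapses both to $O(1/K_0)$, which can then be made $\leq \varepsilon/2$ by choosing $K_0$ large. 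Your "main obstacle" paragraph identifies the difficulty but attributes its resolution to "scale-invariant structure" and "dyadic separation of scales"; neither is the actual mechanism, and in fact the cumulative Hessian contribution of $q_\star$ does grow linearly in the number of terms. It is the $1/K^{i-1}$ normalization, tracked jointly with the maximal-function set, that makes the iteration close.
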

\begin{proof}
(i) 
First, we prove 
\begin{equation}\label{eq-decay-est-1st}
\frac{\left|Q_1\setminus \underline\sG^{1}\left(\overline r_{k_R}^2u; B_{7R}(x_0)\right) \right|}{|Q_1|}\leq 1-\mu.
\end{equation}  
In fact,  from Lemma \ref{lem-decomp-cube-ball}, we find  $\overline z_{k_R}\in Q_1$ and  $\overline r_{k_R}\in \left[\frac{\delta_0 R}{2},\frac{R}{2}\right)$ for $k=k_R$ such that 
\begin{equation}\label{eq-Q_1-ball}
B_{\delta \overline r_{k_R} }(\overline z_{k_R})\subset  Q_1 \subset \widetilde Q_1\subset \overline B_{2\overline r_{k_R}}(\overline z_{k_R})\subset B_{7\overline r_{k_R}}(\overline z_{k_R})\subset B_{7R}(x_0).
\end{equation}
  We notice that   $0\leq u+\frac{1}{2}\leq 1$ in $B_{7R}(x_0)$ and recall from Lemma \ref{lem-weight-int-ellip} that
   $$\left\{\fint_{B_{7\overline r_{k_R}}(\overline z_{k_R})}\left|\overline r_{k_R}^2f^+\right|^{n\eta}\right\}^{\frac{1}{n\eta}}\leq  2 \left\{\fint_{B_{7R}(x_0)}\left|R^2f^+\right|^{n\eta}\right\}^{\frac{1}{n\eta}}\leq \e.
$$
   Thus  we apply   Lemma \ref{lem-abp-type} to $u+\frac{1}{2}$ in order to obtain
\begin{align*}
  \frac{\left| \underline\cG_{\overline r_{k_R}^{-2}}\left(u+\frac{1}{2}+v(\cdot;\overline r_{k_R};\overline z_{k_R}); \overline B_{7\overline r_{k_R}}(\overline z_{k_R}); B_{7R}(x_0)\right)\cap B_{\delta \overline r_{k_R}}(\overline z_{k_R}) \right|}{\left|B_{7\overline r_{k_R}}(\overline z_{k_R})\right| } \geq\mu.
 \end{align*}  
Since we have $$ \underline\cG_{\overline r_{k_R}^{-2}}\left(u+\frac{1}{2}+v(\cdot;\overline r_{k_R};\overline z_{k_R}); \overline B_{7\overline r_{k_R}}(\overline z_{k_R}); B_{7R}(x_0)\right) \subset \underline\sG^{1}\left(\overline r_{k_R}^2u; B_{7R}(x_0)\right),$$
we deduce that 
\begin{equation*}
 \frac{\left| \underline\sG^{1}\left(\overline r_{k_R}^2u; B_{7R}(x_0)\right)\cap Q_1\right|}{\left|Q_1\right| }  \geq  \frac{\left|\underline\sG^{1}\left(\overline r_{k_R}^2u; B_{7R}(x_0)\right)\cap B_{\delta\overline  r_{k_R}}(\overline z_{k_R}) \right|}{\left|B_{7\overline r_{k_R}}(\overline z_{k_R})\right| }
\geq\mu,
\end{equation*}
which implies \eqref{eq-decay-est-1st}. 

(ii) 
For $i\in\N,$ we define 
$$A:= Q_1\setminus \underline\sG^{K^{i}}\left(\overline r_{k_R}^2u; B_{7R}(x_0)\right),$$
and $$ B:= \left(Q_1\setminus \underline\sG^{K^{i-1}}\left(\overline r_{k_R}^2u; B_{7R}(x_0)\right)\right)\cup \left\{z\in Q_1: m_{B_{7R}(x_0)}\left( |R^2f^+|^{n\eta}\right)(z)> K^{(i-1)n\eta} \right\},$$
where $m_{B_{7R}(x_0)}$   is the  maximal operator  over $B_{7R}(x_0)$ in  Lemma \ref{lem-max-ft}.
 It is clear  that $A\subset B\subset Q_1$ and $|A|\leq (1-\mu)|Q_1|$ according to \eqref{eq-decay-est-1st}.   Now we claim that 
\begin{equation}\label{eq-decay-iteration}
|A|\leq (1-\mu)|B|,
\end{equation}
for  a   large number $K\in\N$ to be  chosen  later. 
Using  Lemma \ref{lem-CZ},   it suffices to show that if $Q\subset Q_1$ is a dyadic cube of generation $k> k_R$ such that   $|A\cap Q|> (1-\mu)|Q|,$ then  $\widetilde Q\subset B. $ 
Suppose to the contrary that $\widetilde Q\not\subset B,$ 
and  let   $ \tilde  z$ be a point  in $\widetilde Q\setminus B\subset Q_1.$
From Lemma \ref{lem-decomp-cube-ball} again,   we find  $  z_0\in  Q$ and $ \overline  r_k\in(0,R)$ such that   
\begin{align*}
  \tilde z &\in \widetilde Q \cap \underline\sG^{K^{i-1}}\left(\overline r_{k_R}^2u; B_{7R}(x_0)\right)\cap  \left\{z\in Q_1: m_{B_{7R}(x_0)}\left(|R^2f^+|^{n\eta}\right)(z)\leq K^{(i-1)n\eta} \right\} \\
&\subset \overline B_{2\overline r_k}(z_0)\cap \underline\sG^{K^{i-1}}\left(\overline r_{k_R}^2u; B_{7R}(x_0)\right)\cap  \left\{z\in Q_1: m_{B_{7R}(x_0)}\left( |R^2f^+|^{n\eta}\right)(z)\leq  K^{(i-1)n\eta} \right\},
\end{align*}
and   
$B_{\delta \overline r_k}(z_0)\subset Q\subset\widetilde Q\subset \overline B_{2\overline r_k}(z_0)\subset B_{7\overline r_k}(z_0)\subset B_{7R}(x_0) .$ 
From the definition of $\underline\sG^{K^{i-1}}\left(\overline r_{k_R}^2u; B_{7R}(x_0)\right),$ 
we consider  
$$\tilde u:= \overline r_{k_R}^2u+ q\qquad\mbox{in $B_{7R}(x_0)$}$$
for some $q\in\cQ_l\left(B_{7R}(x_0)\right)$ with $1\leq l\leq K^{i-1},$    
  satisfying  
 $$\inf_{B_{7R}(x_0)}\tilde u=\tilde u\left( \tilde z\right).$$
Indeed, we can write 
$$\tilde u:= \overline r_{k_R}^2u+\sum_{j=1}^{l}r_j^2v(\cdot;r_j;z_j) +\sum_{j=1}^{l}\frac{1}{2}d_{y_j}^2\quad\mbox{in $B_{7R}(x_0),$}$$
  where $r_j>0,$ $B_{7r_j}(z_j)\subset B_{7R}(x_0)$ and $y_j\in\overline B_{7r_j}(z_j)$ for  $j=1,\cdots,l.$
 For  large constants $K>K_0>1,$ define  
 $$\tilde w:= \frac{1}{  \overline r_k^2K_0K^{i-1}} \tilde u - \frac{1}{  \overline r_k^2K_0K^{i-1}}\tilde u\left( \tilde z\right),$$ which is nonnegative in   $B_{7R}(x_0) ,$ and vanishes at $ \tilde  z \in \overline B_{2  \overline r_k}(z_0).$ 
 Recall from Lemma \ref{lem-barrier} that 
$$r_j^2D^2v(\cdot;r_j;z_j)\leq C{\bf I}\quad\mbox{in $B_{7R}(x_0)\setminus \Cut(z_j)$}\quad\forall j=1,2,\cdots,l$$ 
for a uniform constant $C>0,$  depending only on $n,\lambda,\Lambda,$ and $\sH\left(2\sqrt{\kappa}R_0\right),$ 
 since  $B_{7r_j}(z_j)\subset B_{7R}(x_0)$ with $7R\leq R_0.$ 
This combined with  Lemma \ref{lem-hess-dist-sqrd}  implies 
 that 
 \begin{align*}
 \cM^-(D^2 \tilde w)&\leq \frac{1}{  \overline r_k^2K_0K^{i-1}}\overline r_{k_R}^2f^++\frac{1}{  \overline r_k^2K_0K^{i-1}}  \left\{\sum_{j=1}^{l}r_j^2\cM^+\left(D^2v(\cdot;r_j;z_j)\right) + \sum_{j=1}^{l}\cM^+\left(D^2d_{y_j}^2/2\right)\right\}\\
 &\leq \frac{1}{  \overline r_k^2K_0K^{i-1}}\overline r_{k_R}^2f^++\frac{1}{  \overline r_k^2K_0K^{i-1}}  K^{i-1}n\Lambda\left\{C+\sH\left(2\sqrt{\kappa}R_0\right) \right\}\\
 &=:\tilde f \qquad\qquad\qquad\mbox{in $B_{7R}(x_0)\setminus\left(\bigcup_{j=1}^l\Cut(z_j)\cup\bigcup_{j=1}^l\Cut(y_j)\right).$}
 \end{align*}
By  choosing $K_0\in\N $ sufficiently large, we deduce that 
 \begin{align*}
 \left(\fint_{B_{7  \overline r_k}(z_0)} |   \overline r_k^2 \tilde f|^{n\eta}\right)^{\frac{1}{n\eta}}
 &\leq \frac{1}{K_0K^{i-1}}\left(\fint_{B_{7\overline r_k}(  z_0)} | \overline r_{k_R}^2   f^+|^{n\eta}\right)^{\frac{1}{n\eta}}+\frac{1}{K_0}n\Lambda\left\{C+\sH\left(2\sqrt{\kappa}R_0\right) \right\}\\
 & \leq  \frac{1}{K_0K^{i-1}}\left(\fint_{B_{7\overline r_k}(  z_0)} | R^2   f^+|^{n\eta}\right)^{\frac{1}{n\eta}}+\frac{\e}{2}\\
 &\leq \frac{1}{K_0K^{i-1}}K^{i-1}+\frac{\e}{2}= \frac{1}{K_0}+\frac{\e}{2}
 \leq  \e
 \end{align*} 
 since $\tilde z\in \overline B_{2\overline r_k}(z_0)\cap    \left\{z\in Q_1: m_{B_{7R}(x_0)}\left(|R^2f^+|^{n\eta}\right)(z)\leq K^{(i-1)n\eta} \right\}.$ 
 Thus, we apply   Corollary \ref{cor-abp-type} to $\tilde w$ in $B_{7  \overline r_k}(z_0)$ to  obtain  that 
   \begin{equation}\label{eq-decay-est-scaled}
   \frac{\left|\underline\cG_{{  \overline r_k}^{-2}}\left(\tilde w+v(;  \overline r_k;   z_0); \overline B_{7  \overline r_k}(  z_0); B_{7R}(x_0)\right)\cap B_{\delta   \overline r_k}(  z_0)\right|}{ |B_{7  \overline r_k}(z_0)|} \geq \mu.
   \end{equation}

   Now we claim that 
  \begin{equation}\label{eq-good-sets}
  \underline\cG_{{  \overline r_k}^{-2}}\left(\tilde w+v(;  \overline r_k;   z_0); \overline B_{7  \overline r_k}(  z_0); B_{7R}(x_0)\right)\subset \underline\sG^{K^{i}}\left(\overline r_{k_R}^2u; B_{7R}(x_0)\right) 
  \end{equation}
   for a large uniform constant $K\in\N. $  In fact, 
 let  $x\in \cG_{{  \overline r_k}^{-2}}\left(\tilde w+v(;  \overline r_k;   z_0); \overline B_{7  \overline r_k}(  z_0); B_{7R}(x_0)\right).$ Then  
 we have 
 \begin{equation*}
\begin{split}
\inf_{B_{7R}(x_0)}\left\{ \tilde w+v(\cdot;  \overline r_k;   z_0)+ \frac{1}{2  \overline r_k^2} d_{  y_0}^2\right\}= \tilde w(x)+v(x;  \overline r_k;   z_0)+  \frac{1}{2  \overline r_k^2}d_{  y_0}^2(x)
\end{split}
\end{equation*} for some $  y_0\in \overline B_{7  \overline r_k}(  z_0),$
that is, 
$$ \overline r_{k_R}^2u+ \sum_{j=1}^{l}r_j^2v(\cdot;r_j;z_j)+K_0K^{i-1}   \overline r_k^2v(\cdot;  \overline r_k;  z_0) +\sum_{j=1}^{l}\frac{1}{2} d_{y_j}^2+\frac{K_0K^{i-1} }{2} d_{  y_0}^2$$
has its minimum at $x$ in $B_{7R}(x_0).$ 
 By choosing $K\in\N$ such that $K\geq1+K_0,$ we conclude  that $x\in \underline\sG^{K^{i}}\left(\overline r_{k_R}^2u; B_{7R}(x_0)\right) ,$ which proves \eqref{eq-good-sets}.  

     Together with \eqref{eq-decay-est-scaled} and \eqref{eq-good-sets},  we have  that 
       $$\frac{\left|\underline\sG^{K^i}\left(\overline r_{k_R}^2u;B_{7R}(x_0)\right)\cap Q\right|}{ |Q|}\geq \frac{\left|\underline\sG^{K^i}\left(\overline r_{k_R}^2u;B_{7R}(x_0)\right)\cap B_{\delta   \overline r_k}(  z_0)\right|}{ |B_{7  \overline r_k}(  z_0)|} \geq \mu,$$
which means that   $ {|A\cap Q|}<(1-\mu)|Q|.$
This contradicts to the assumption  $|A\cap Q|>(1-\mu)|Q|.$ Therefore, we have proved   \eqref{eq-decay-iteration}  according to 
Lemma \ref{lem-CZ}. 

(iii)
Let 
$$\alpha_i=\frac{\left|Q_1\setminus \underline\sG^{K^{i}}\left(\overline r_{k_R}^2u; B_{7R}(x_0)\right)\right|}{|Q_1|},\quad\mbox{and}\quad \beta_i:=\frac{\left|\left\{z\in Q_1: m_{B_{7R}(x_0)}\left(|R^2f^+|^{n\eta}\right)(z)> K^{in\eta} \right\}\right|}{|Q_1|}.$$
 From (ii),  we have 
 $$\alpha_{i}\leq (1-\mu)(\alpha_{i-1}+\beta_{i-1})\quad\forall i\in\N$$ which implies that
 $$\alpha_i\leq (1-\mu)^i\alpha_0+ \sum_{j=0}^{i-1}(1-\mu)^{i-j}\beta_j\leq (1-\mu)^i+ \sum_{j=0}^{i-1}(1-\mu)^{i-j}\beta_j.$$
 Since $\int_{B_{7R}(x_0)} |R^2f^+|^{n\eta} \leq \e^{n\eta}|B_{7R}(x_0)|,$ the weak type $(1,1)$ estimate  in Lemma \ref{lem-max-ft}    leads to that 
 \begin{align*}
 \beta_j&=\frac{\left|\left\{z\in Q_1: m_{B_{7R}(x_0)}\left(|R^2f^+|^{n\eta}\right)(z)> K^{jn\eta} \right\}\right|}{|Q_1|} \leq  \frac{C_1}{K^{jn\eta}}\e^{n\eta}\frac{|B_{7R}(x_0)|}{|Q_1|}.
 \end{align*}
From  Lemma \ref{lem-decomp-cube-ball} and \eqref{eq-Q_1-ball},  we have  
$$\frac{|B_{7R}(x_0)|}{|Q_1|}=\frac{|B_{\delta_1 R}(x_0)|}{|Q_1|}\frac{|B_{7R}(x_0)|}{|B_{\delta_1R}(x_0)|}\leq \frac{|B_{2 \overline r_{k_R} }(\overline z_{k_R})|}{|B_{\delta \overline r_{k_R} }(\overline z_{k_R})|}\frac{|B_{7R}(x_0)|}{|B_{\delta_1R}(x_0)|},$$
 which   is 
bounded by a uniform constant depending only on $n,$ and $\sqrt{\kappa} R_0$ from Bishop-Gromov's Theorem \ref{thm-BG}. Hence we have  that $\beta_j\leq C K^{-jn\eta}$ for  a uniform constant $C>0.$
Therefore we select $K> (1-\mu)^{-\frac{1}{n\eta}}$ sufficiently  large so that 
$$\alpha_i\leq \left(1+ Ci\right)(1-\mu)^i\quad\forall i\in\N,$$
which implies \eqref{eq-decay-est}.
\end{proof}
 

 Proposition \ref{prop-decay-est-supersol} yields a power decay estimate \eqref{eq-decay-est-1} of the distribution function of $|D^2u|,$ the norm of the Hessian of  the solution $u$ to the uniformly elliptic equation. 
 \begin{lemma}[Decay estimate]\label{lem-W2p}
Assume that  $\Sec\geq -\kappa$   for $\kappa\geq0.$ 
Let  $x_0\in M$ and $0<7R\leq R_0.$  
  Let $u$ be a smooth function in $\overline B_{7R}(x_0)$ such that $\|u\|_{L^{\infty}(B_{7R}(x_0))}\leq1/2,$ and   $u\in\cS^*\left(\lambda,\Lambda,f\right)$  
 in $B_{7R}(x_0)$    with     
$$\left(\fint_{B_{7R}(x_0)} |R^2 f|^{n\eta}\right)^{\frac{1}{n\eta}}\leq \e/2; \quad\eta:=1+\log_2\cosh (4\sqrt\kappa R_0).$$ 
Let $Q_1$ be a dyadic cube of generation $k_R$ such that $d(x_0,Q_1)\leq \delta_1 R.$   
Then we have 
\begin{equation}\label{eq-decay-est-1}
 {\left|\left\{R^2|D^2u|\geq h\right\}\cap Q_1\right|}  \leq C h^{-\e_0}|Q_1|  \quad\forall h>0
 \end{equation} 
 for uniform constants $\e_0\in(0,1)$ and $C>0$ depending only on $n,\lambda,\Lambda,$ and $ \sqrt{\kappa}R_0,$ 
and hence 
\begin{equation*}
\left(\fint_{Q_1} \left|R^2 D^2u\right|^{\ve}\right)^{\frac{1}{\ve}}\leq C,
\end{equation*}
where $\ve\in(0,1)$ and $C>0$ are uniform constants.
\end{lemma}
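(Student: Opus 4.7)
The plan is to combine the contact-set decay of Proposition \ref{prop-decay-est-supersol} with a pointwise two-sided Hessian bound on the \emph{symmetric} special contact set, and then translate the resulting geometric decay in the ``degree'' $k$ into a polynomial decay of the distribution function of $|D^2u|$. Since $u\in\cS^*(\lambda,\Lambda,f)$, both $u$ and $-u$ satisfy $\cM^-(D^2\,\cdot\,)\leq |f|$ in $B_{7R}(x_0)$, so Proposition \ref{prop-decay-est-supersol} applies to each, and intersecting the two conclusions yields
\begin{equation*}
\frac{\bigl|Q_1\setminus \sG^{K^{(i-1)}}\bigl(\overline r_{k_R}^2 u;B_{7R}(x_0)\bigr)\bigr|}{|Q_1|}\leq 2C\Bigl(1-\frac{\mu}{2}\Bigr)^{i}\qquad\forall i\in\N.
\end{equation*}

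The first step is to show that membership in the symmetric contact set gives a pointwise two-sided bound on $D^2u$. Fix $x\in\underline\sG^l\bigl(\overline r_{k_R}^2 u;B_{7R}(x_0)\bigr)$ with $l\leq k$ and the associated test function
$$q=\sum_{j=1}^l r_j^2 v(\cdot;r_j;z_j)+\sum_{j=1}^l \tfrac12 d_{y_j}^2\in \cQ_l(B_{7R}(x_0)),$$
so that $\overline r_{k_R}^2 u+q$ attains its minimum at $x$. Repeating the cut-locus argument of Lemma \ref{lem-contact-set-tech} (with the single barrier $v_\delta(\cdot;r;z_0)$ there replaced by the sum $q$, and the role of $\frac{1}{2r^{-2}}d_{y_0}^2$ played by the constant function) shows that $x\notin \bigcup_{j=1}^l\Cut(z_j)\cup\bigcup_{j=1}^l\Cut(y_j)$, so both $u$ and $q$ are smooth at $x$ and $\overline r_{k_R}^2 D^2u(x)+D^2q(x)\geq 0$. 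By Lemma \ref{lem-barrier}(iv) each term $r_j^2 v(\cdot;r_j;z_j)$ has Hessian at most $C_\delta{\bf I}$, and by Lemma \ref{lem-hess-dist-sqrd} each $\tfrac12 d_{y_j}^2$ has Hessian at most $\sH(2\sqrt\kappa R_0){\bf I}$, so $D^2q(x)\leq C\,l\,{\bf I}\leq C\,k\,{\bf I}$ for a uniform constant $C$. Hence $\overline r_{k_R}^2 D^2u(x)\geq -Ck\,{\bf I}$. The same argument applied to $-u$ on $\underline\sG^k(-\overline r_{k_R}^2 u)$ yields $\overline r_{k_R}^2 D^2u(x)\leq Ck\,{\bf I}$, so for every $x\in\sG^k(\overline r_{k_R}^2 u;B_{7R}(x_0))$ one has $\overline r_{k_R}^2 |D^2u(x)|\leq C_*\,k$.

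With this pointwise control in hand, setting $k=K^{i-1}$ gives
$$\bigl\{x\in Q_1:\overline r_{k_R}^2|D^2u(x)|>C_* K^{i-1}\bigr\}\subset Q_1\setminus\sG^{K^{i-1}}(\overline r_{k_R}^2 u;B_{7R}(x_0)),$$
so the decay estimate above produces $|\{\overline r_{k_R}^2|D^2u|>C_*K^{i-1}\}\cap Q_1|\leq 2C(1-\mu/2)^i|Q_1|$. Because $\overline r_{k_R}\in[\delta_0R/2,R/2)$, the factor $R^2/\overline r_{k_R}^2$ is bounded uniformly, so for any $h>0$ picking $i$ with $K^{i-1}\sim h$ gives \eqref{eq-decay-est-1} with $\e_0:=\log(1-\mu/2)^{-1}/\log K$. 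The $L^\ve$ bound follows from the layer-cake identity: for any $\ve\in(0,\e_0)$,
$$\fint_{Q_1}|R^2 D^2u|^\ve=\frac{\ve}{|Q_1|}\int_0^\infty h^{\ve-1}\bigl|\{R^2|D^2u|\geq h\}\cap Q_1\bigr|\,dh\leq \ve\int_0^\infty h^{\ve-1}\min(1,Ch^{-\e_0})\,dh,$$
which is a uniform constant.

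The main obstacle is the Hessian control in the first step: the barrier functions $v(\cdot;r_j;z_j)$ in the test function $q$ are only semi-convex away from the cut loci of the $z_j$, and the squared distance functions $d_{y_j}^2$ are only semi-convex away from $\Cut(y_j)$, yet we need a genuine pointwise Hessian inequality at the contact point $x$. This is resolved precisely by the cut-locus exclusion of Lemma \ref{lem-contact-set-tech}, whose proof uses the strict monotonicity of $\psi_\delta$ together with Lemma \ref{lem-dist-sqrd-cut}; the same argument applies to the enlarged test class $\cQ_l$ since the semi-convexity defects of individual terms cannot cancel one another. The second delicate point is that the bound $D^2q\leq Cl\,{\bf I}$ grows linearly in $l$, so it is essential that the set $Q_1\setminus\sG^{K^{i-1}}$ shrinks geometrically in $i$ (not merely in $k$): this is exactly the content of Proposition \ref{prop-decay-est-supersol}, and it is what converts the linear-in-$k$ Hessian bound into the power-type tail estimate \eqref{eq-decay-est-1}.
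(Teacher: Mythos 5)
Your proof is correct and follows the same overall architecture as the paper: apply Proposition~\ref{prop-decay-est-supersol} to $\pm u$, show that membership in the symmetric special contact set $\sG^{K^{i}}$ forces a two-sided pointwise Hessian bound scaling linearly in $K^{i}$, and then convert the geometric decay of the complement into the power tail via a layer-cake argument.

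The one genuine divergence from the paper is in how the pointwise Hessian bound is obtained. You first establish, by adapting the cut-locus argument of Lemma~\ref{lem-contact-set-tech}, that the contact point $x$ lies outside $\bigcup_j\Cut(z_j)\cup\bigcup_j\Cut(y_j)$, so the test function $q$ is smooth at $x$ and the classical second-derivative test yields $\overline r_{k_R}^2D^2u(x)+D^2q(x)\geq 0$, whence the bound via Lemma~\ref{lem-barrier}(iv) and Lemma~\ref{lem-hess-dist-sqrd}. The paper bypasses the cut-locus exclusion entirely: it takes second difference quotients of the one-sided inequality $\overline r_{k_R}^2u-\overline r_{k_R}^2u(x)\geq -q_1+q_1(x)$ and uses only the \emph{upper} bounds on the $\limsup$ second difference quotients of the barriers (Remark~\ref{rmk-barrier}) and of $d_{y_j}^2$ (Lemma~\ref{lem-hess-dist-sqrd}), which hold at every point, cut points included. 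Since $u$ itself is smooth, the $\liminf$ on the left equals $\overline r_{k_R}^2\langle D^2u(x)X,X\rangle$, and the desired two-sided bound drops out with no smoothness needed for $q$ at $x$. Both routes are valid; the paper's is leaner because it never needs to know that $x$ avoids the cut loci, while yours reuses the machinery of Lemma~\ref{lem-contact-set-tech} that the paper in fact does invoke later, in the corollary giving the gradient estimate, where the $\limsup$ trick alone is not enough. Your adaptation of the cut-locus exclusion (isolating one barrier or squared-distance term and bounding the rest from above) is correct, since $\liminf(a_t-b_t)\geq\liminf a_t-\limsup b_t$ lets the semi-convexity defect of one term be detected against the semi-convex background of the others; it would have been worth saying explicitly that the relevant $X$ is the one produced by Lemma~\ref{lem-dist-sqrd-cut}, and that $x\neq z_{j_0}$ so that $\psi_\delta'$ is positive at the relevant point.
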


\begin{proof}
Applying    Proposition \ref{prop-decay-est-supersol} to $\pm u$, we have 
\begin{equation}\label{eq-decay-est-sol}
\frac{\left|Q_1\setminus   \sG^{K^{(i-1)}}\left(\overline r_{k_R}^2u; B_{7R}(x_0)\right) \right|}{|Q_1|}\leq C\left(1-\frac{\mu}{2}\right)^i\quad\forall i=1,2,\cdots.
\end{equation}  
In order to prove \eqref{eq-decay-est-1}, we    claim that   for any $i\in\N\cup\{0\}$
\begin{equation}\label{eq-hess-parabola}
    \sG^{K^{i}}\left(\overline r_{k_R}^2u; B_{7R}(x_0)\right)\subset  \left\{x\in B_{7R}(x_0): R^2|D^2u(x)| \leq C K^{i}\right\},
\end{equation} 
where $C>0$ is a uniform constant depending only on $n,\lambda,\Lambda,$ and $\sqrt{\kappa}R_0.$
In fact, fix $i\in\N\cup\{0\},$ and   $x\in \sG^{K^i}\left(\overline r_{k_R}^2u; B_{7R}(x_0)\right).$ Then 
there exist  $ q_1\in \cQ_{l_{1}}\left(B_{7R}(x_0)\right),$ and $q_2\in \cQ_{l_{2}}\left(B_{7R}(x_0)\right)$ for $1\leq l_1,l_2\leq K^{i}$    
  such that 
  \begin{equation}\label{eq-contact-pt-above-below}
\begin{split}
-  q_1+  q_1(x)
&\leq   \overline r_{k_R}^2u -\overline r_{k_R}^2u(x)
\leq   q_2-  q_2(x)
 \qquad\mbox{in $B_{7R}(x_0)$}.
 \end{split}
\end{equation}
From the definition of  of $\cQ_{l_{1}}\left(B_{7R}(x_0)\right), $ we can write 
\begin{equation}\label{eq-rep-q_1}
\begin{split}
-  q_1+  q_1(x)&:=-\sum_{j=1}^{l_1}  r_j^2v(\cdot;     r_j;  z_j) -\sum_{j=1}^{l_1} \frac{1}{2} d_{     y_j}^2 +\sum_{j=1}^{l_1}     r_j^2v(x;      r_j;  z_j) +\sum_{j=1}^{l_1} \frac{1}{2}d_{     y_j}^2(x)\\ 
&\leq  \overline r_{k_R}^2u -\overline r_{k_R}^2u(x)  \qquad\mbox{in $B_{7R}(x_0)$}
\end{split}
\end{equation}
for some  $r_j>0,$ $B_{7{  r}_j}(  z_j)\subset B_{7R}(x_0)$ and ${  y}_j\in\overline B_{7  r_j}(  z_j)\,\, (j=1,2,\cdots,l_1).$  For a unit vector  $X\in T_xM,$ 
 we have 
\begin{align*}
&  \liminf_{t\to 0}\frac{\overline r_{k_R}^2u\left(\exp_xtX\right)+\overline r_{k_R}^2u\left(\exp_x -tX\right)-2\overline r_{k_R}^2u(x)}{t^2}\\
\geq&-\sum_{j=1}^{l_1}  r_j^2  \limsup_{t\to0} \frac{1}{t^2}\left\{ v(\exp_xtX;  r_j;  z_j)+v(\exp_x-tX;  r_j;  z_j)-2v(x;  r_j;  z_j)  \right\}\\
&- \sum_{j=1}^{l_1}\limsup_{t\to0} \frac{1}{2t^2}\left\{d^2_{  y_j}(\exp_xtX)+d_{  y_j}^2(\exp_x-tX)-2d^2_{  y_j}(x)\right\}.
\end{align*} 
We recall from Remark \ref{rmk-barrier} and Lemma \ref{lem-hess-dist-sqrd} to obtain that 
$$\liminf_{t\to 0}\frac{\overline r_{k_R}^2u\left(\exp_xtX\right)+\overline r_{k_R}^2u\left(\exp_x -tX\right)-2\overline r_{k_R}^2u(x)}{t^2}\geq -Cl_1\geq -CK^{i}$$ for a uniform constant $C>0. $
 Thus  we deduce  that for any unit vector $X\in T_xM,$ 
 $$ \overline r_{k_R}^2\left\langle D^2u(x)\cdot X,X\right\rangle\geq -CK^{i}.$$
   According to our argument above using \eqref{eq-contact-pt-above-below}, 
  we obtain    \eqref{eq-hess-parabola}  
since $\overline r_{k_R}\in \left(\frac{\delta_0R}{2},\frac{R}{2}\right].$     
Therefore, it follows from \eqref{eq-decay-est-sol}  that   for $i\in\N\cup\{0\}, $
$$\left|\left\{R^2|D^2u|> CK^i\right\}\cap Q_1\right|\leq \left|Q_1\setminus \sG^{K^i}\left(\overline r_{k_R}^2u; B_{7R}(x_0)\right)\right|\leq C\left(1-\frac{\mu}{2}\right)^{i+1}|Q_1|,$$
which implies  \eqref{eq-decay-est-1}.

Letting $\ve:=\e_0/2,$ we have 
\begin{align*}
\fint_{Q_1}\left(R^2|D^2u|\right)^\ve \, d\vol&=\frac{1}{| Q_1 |}\left\{\int_{Q_1\cap \{R^2|D^2u|\leq1\}}+\int_ {Q_1\cap \{R^2|D^2u|>1\}}\right\}\left(R^2|D^2u|\right)^\ve \, d\vol\\
&\leq 1+\frac{1}{|Q_1|}\ve\int_{1}^{\infty}h^{\ve-1}\left|\left\{|R^2D^2u|\geq h\right\}\cap Q_1\right|dh\\
&= 1+\frac{\e_0}{2}\int_{1}^{\infty}h^{\e_0/2-1}C h^{-\e_0}dh,
\end{align*}
where the last quantity is bound by a uniform constant. This completes the proof. 
\end{proof}

 \begin{cor}
Under the same assumption as Lemma \ref{lem-W2p}, 
we have 
\begin{equation*} 
\left(\fint_{Q_1} \left|R \D u\right|^{\ve}\right)^{\frac{1}{\ve}}\leq C,
\end{equation*}
where $\ve\in(0,1)$ and $C>0$ are uniform constants.
\end{cor}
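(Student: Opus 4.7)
The plan is to mimic the strategy in Lemma \ref{lem-W2p}, replacing the Hessian bound by a gradient bound at each special contact point. Concretely, I would prove the inclusion
\begin{equation*}
\underline\sG^{K^{i}}\left(\overline r_{k_R}^2u;B_{7R}(x_0)\right)\subset \left\{x\in B_{7R}(x_0): R|\D u(x)|\leq CK^{i}\right\}\qquad\forall i\in\N\cup\{0\},
\end{equation*}
for a uniform constant $C>0,$ then combine this with the decay estimate
\begin{equation*}
\frac{\left|Q_1\setminus \underline\sG^{K^{(i-1)}}\left(\overline r_{k_R}^2u;B_{7R}(x_0)\right)\right|}{|Q_1|}\leq C\left(1-\tfrac{\mu}{2}\right)^i
\end{equation*}
from Proposition \ref{prop-decay-est-supersol} (no symmetrization needed, only $\underline\sG^{K^i}$ is used here). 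This yields the power decay $|\{R|\D u|>h\}\cap Q_1|\leq Ch^{-\e_0}|Q_1|,$ and a standard layer-cake integration with $\ve=\e_0/2$ then gives the desired $L^\ve$ estimate exactly as at the end of Lemma \ref{lem-W2p}.

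To establish the inclusion, I fix $x\in \underline\sG^{K^i}\left(\overline r_{k_R}^2u;B_{7R}(x_0)\right),$ so that there exists $q_1=\sum_{j=1}^{l_1} r_j^2v(\cdot;r_j;z_j)+\sum_{j=1}^{l_1}\tfrac12 d_{y_j}^2$ with $1\leq l_1\leq K^i$ and
\begin{equation*}
\inf_{B_{7R}(x_0)}\left(\overline r_{k_R}^2 u+q_1\right)=\overline r_{k_R}^2u(x)+q_1(x).
\end{equation*}
By Lemma \ref{lem-contact-set-tech}, $x$ lies in the complement of all the relevant cut loci, so every summand of $q_1$ is smooth at $x$ and the first-order condition $\overline r_{k_R}^2\D u(x)=-\D q_1(x)$ holds.

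The key pointwise bound on $\D q_1(x)$ follows from the two ingredients already in the paper. From Lemma \ref{lem-barrier}(v), applied with $B_{7r_j}(z_j)\subset B_{7R}(x_0),$ we have $r_j^2|\D v(\cdot;r_j;z_j)|<7RC_\delta$ at $x$; and since $y_j\in\overline B_{7r_j}(z_j)\subset B_{7R}(x_0)$ and $x\in B_{7R}(x_0),$ we have $|\D d_{y_j}^2(x)|=2d_{y_j}(x)\leq 28R.$ Summing over $j=1,\dots,l_1$ gives $|\D q_1(x)|\leq Cl_1 R\leq CK^i R,$ whence
\begin{equation*}
R|\D u(x)|=\frac{R}{\overline r_{k_R}^{2}}|\D q_1(x)|\leq \frac{4}{\delta_0^2 R^2}\cdot CK^iR^2=CK^i,
\end{equation*}
using $\overline r_{k_R}\geq \delta_0 R/2$ from Lemma \ref{lem-decomp-cube-ball}(ii). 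This proves the inclusion, and the corollary follows.

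The only mildly delicate point is the uniformity of the constant multiplying $l_1$: one must confirm that the gradient bounds for the barrier and for $\tfrac12 d_{y_j}^2$ are scale-invariant after multiplication by $r_j^2$ and division by $R,$ which is precisely what Lemma \ref{lem-barrier}(v) is designed to give. Everything else — measurability of $\underline\sG^{K^i},$ avoidance of cut loci, the layer-cake integration — is a verbatim repetition of the reasoning in Lemma \ref{lem-W2p}, so no new obstacle arises.
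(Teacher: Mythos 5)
Your proof is correct and follows the same overall strategy as the paper: establish the inclusion of the special contact set into the gradient sublevel set $\{R|\D u|\leq CK^i\}$ via the scale-invariant gradient bounds $r_j^2|\D v(\cdot;r_j;z_j)|\lesssim R$ from Lemma \ref{lem-barrier}(v) and $|\D d_{y_j}^2|=2d_{y_j}\lesssim R$, invoke cut-locus avoidance to make the first-order condition legitimate, and then feed the inclusion into the decay estimate and the layer-cake integral exactly as in Lemma \ref{lem-W2p}. Your observation that the one-sided set $\underline\sG^{K^i}$ suffices here (because the vanishing of $\D(\overline r_{k_R}^2u+q_1)$ at an interior minimum already pins down $|\D u(x)|$, with no need for a touching from above) is a valid small streamlining of the paper's argument, which phrases the inclusion with the two-sided set $\sG^{K^i}$ but in fact only exploits $q_1$.
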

\begin{proof}
It suffices to show that 
 for any $i\in\N\cup\{0\}$
\begin{equation}\label{eq-gradient-parabola}
   \sG^{K^{i}}\left(\overline r_{k_R}^2u; B_{7R}(x_0)\right)\subset  \left\{x\in B_{7R}(x_0): R|\D u(x)| \leq C K^{i}\right\},
\end{equation} 
 since the corollary    follows   from  the   same  argument   as in  Lemma \ref{lem-W2p}. 
Fix $i\in\N\cup\{0\},$ and   $x\in \sG^{K^i}\left(\overline r_{k_R}^2u; B_{7R}(x_0)\right).$ Then 
there exist  $ q_1\in \cQ_{l_{1}}\left(B_{7R}(x_0)\right),$ and $q_2\in \cQ_{l_{2}}\left(B_{7R}(x_0)\right)$ for $1\leq l_1,l_2\leq K^{i}$    
  satisfying \eqref{eq-contact-pt-above-below}. 
    We recall from Lemma \ref{lem-hess-dist-sqrd} and Lemma \ref{lem-barrier} that   for any  
 $B_{7r}(z)\subset B_{7R}(x_0) $ and $  y\in \overline  B_{7 r}(z),$
\begin{align*}
\frac{1}{R}|\D d^2_{y}| = \frac{2}{R} d_{ y}< 28  &\quad\mbox{in $B_{7R}(x_0)\setminus \Cut(y),$}\\
\frac{1}{7R}  r^2|\D v(\cdot; r; z) |< C  &\quad\mbox{in $B_{7R}(x_0)\setminus  \Cut(z)$}
\end{align*}
 for  a uniform constant $C>0$ depending only on $n,\lambda,\Lambda,$ and $\sqrt{\kappa}R_0.$ 
According to Lemma \ref{lem-contact-set-tech} and \eqref{eq-rep-q_1}, we see that  
$$x\not\in \bigcup_{j=1}^{l_1}\Cut(  z_j)\cup \bigcup_{j=1}^{l_1}\Cut(  y_j),$$
and hence 
$$\frac{\delta_0^2 }{4}R| \D u(x)| \leq \frac{\overline r_{k_R}^2}{R}| \D u(x)| \leq C l_1\leq C K^{i}$$
 since $\overline r_{k_R}\in \left(\frac{\delta_0 R}{2},\frac{R}{2}\right]$.  
Therefore, it follows   that 
$$R|\D u(x)|\leq C K^{i}\quad \forall x\in\sG^{K^i}\left(\overline r_{k_R}^2u; B_{7R}(x_0)\right),$$ 
which completes the proof of \eqref{eq-gradient-parabola}.
\end{proof}

{\bf Proof of Theorem \ref{thm-W2epsilon}} According to \cite[Theorem 1.5]{WZ}, we have the weak Harnack inequality which provides a uniform $L^\varepsilon$-estimate  of $u.$  
By  passing  from cubes to balls with the help of Bishop-Gromov's Theorem \ref{thm-BG}, we conclude a uniform $W^{2,\ve}$-estimate;  refer to  Remark 8.3 and   Theorem 8.1 of  \cite{Ca} for  details.   
\qed 

\begin{thebibliography}{99}




\bibitem[Au]{Au}
T. Aubin,  {\it  Some nonlinear problems in Riemannian geometry,} Springer, 1998.


 




\bibitem[Ca]{Ca}
X. Cabr\'{e}, \textit{Nondivergent elliptic equations on manifolds with nonnegative curvature}, Comm. Pure Appl. Math. \textbf{50} (1997), 623-665.

\bibitem[C]{C}
L.A. Caffarelli, {\it  Interior a priori estimates for solutions of fully nonlinear equations,} Ann. Math. {\bf 130} (1989), 189-213. 

\bibitem[CC]{CC}
L. A. Caffarelli and X. Cabr\'{e}, \textit{Fully Nonlinear Elliptic Equations}, American Mathematical Soc., 1995.


\bibitem[Ch]{Ch}
M. Christ, \textit{A $T(b)$ theorem with remarks on analytic capacity and the Cauchy integral}, Colloq. Math. \textbf{60/61} (1990), 601-628.


\bibitem[CMS]{CMS}
D. Cordero-Erausquin, R. J. McCann  and M. Schmuckenschl\"ager, \textit{ A Riemannian interpolation inequality \`a la Borell, Brascamp and Lieb.} Invent. Math. \textbf{146}  (2001), 219-257.


\bibitem[D]{D}
M. P. do Carmo, {\it Riemannian geometry,} Springer, 1992.


\bibitem[H]{H}
E. Hebey,  {\it Nonlinear analysis on manifolds: Sobolev spaces and inequalities.}   American Mathematical Soc., 1999.


  
  

\bibitem[K]{K}
S. Kim,  {\it  Harnack inequality for nondivergent elliptic operators on Riemannian manifolds}, Pacific J. Math. \textbf{213} (2004), 281-293.
 

\bibitem[KKL]{KKL}
S. Kim, S. Kim and  K.-A. Lee,  {\it Harnack inequality for nondivergent parabolic operators on Riemannian manifolds,}  Calc. Var. \textbf{49} (2014), 669-706.  

\bibitem[KL]{KL}
S. Kim and  K.-A. Lee, {\it Parabolic Harnack inequality of viscosity solutions on Riemannian manifolds,} arXiv:1304.7351.



\bibitem[KS]{KS}
N. V. Krylov and M. V. Safonov, \textit{A property of the solutions of parabolic equations with measurable coefficients} (Russian), Izv. Akad. Nauk SSSR Ser. Mat. \textbf{44} (1980), 161-175; Math. USSR Izvestija \textbf{16} (1981), 151-164 (English).




\bibitem[Le]{Le}
J. M. Lee, {\it Riemannian manifolds: an introduction to curvature,} Springer, 1997.

\bibitem[L]{L}
 F.-H. Lin, {\it Second derivative $L^p$-estimates for elliptic equations of nondivergent type,} Proceedings of the American Mathematical Society {\bf96}  (1986), 447-451.



 \bibitem[PT]{PT}
 C. Pucci and G. Talenti, {\it Elliptic (second-order) partial differential equations with measurable coefficients and approximating integral equations,} Adv. in Math. {\bf19} (1976), 48-105.
 
 \bibitem[S]{S}
O. Savin, {\it  Small perturbation solutions for elliptic equations,} Comm. Partial Differential Equations {\bf32} (2007), 557-578. 

 \bibitem[St]{St} 
E. M. Stein, {\it Singular integrals and differentiability properties of functions.} 
Princeton university press, 1970.


 \bibitem[U]{U} 
 N. N. Ural'ceva, {\it Impossibility of $W^2_q$-bounds for multi-dimensional elliptic equations with
discontinuous coefficients,} 
Zap. Naun. Sem. Leningrad. Otdel. Mat. Inst. Steklov. (LOMI) {\bf 5} (1967), 250-254 (Russian).

 
\bibitem[V]{V}
C. Villani, {\it Optimal transport: old and new},   Springer, 2008. 

\bibitem[W]{W}
L. Wang, {\it On the regularity theory of fully nonlinear parabolic equations. I}, Comm. Pure Appl. Math. \textbf{45} (1992), 27-76.

\bibitem[WZ]{WZ}
Y. Wang and X. Zhang, {\it An Alexandroff-Bakelman-Pucci estimate on Riemannian manifolds}, Adv. Math \textbf{232} (2013), 499-512.



\end{thebibliography}
\end{document}